\newtheorem{theorem}{Theorem}[section]
\newtheorem{lemma}[theorem]{Lemma}
\newtheorem{corollary}[theorem]{Corollary}
\newtheorem*{observation}{Observation}
\theoremstyle{remark}
\newtheorem*{remark}{Remark}
\theoremstyle{definition}
\newtheorem*{definition}{Definition}
\newtheorem{example}{Example}[section]
\newtheorem{exercise}{Exercise}
\newtheorem{claim}[theorem]{Claim}
\newcommand{\BR}{{\mathbb R}}
\newcommand{\BC}{{\mathbb C}}
\newcommand{\BZ}{{\mathbb Z}}
\newcommand{\BH}{{\mathbb H}}
\newcommand{\BT}{{\mathbb T}}
\newcommand{\CC}{{\mathcal C}}
\newcommand{\CN}{{\mathcal N}}
\newcommand{\CH}{{\mathcal H}}
\newcommand{\BP}{{\mathbb P}}
\newcommand{\BE}{{\mathbb E}}
\newcommand{\Var}{\text{Var}}
\newcommand{\Cov}{\text{Cov}}
\newcommand{\bi}{{\mathbf i}}
\newcommand{\bj}{{\mathbf j}}
\newcommand{\Tr}{\text{Tr}\,}
\newcommand{\sgn}{\text{sgn}}
\newcommand{\Ai}{\text{Ai}}
\newcommand{\del}{\partial}
\newcommand{\grad}{\nabla}
\newcommand{\ov}{\overline}
\newcommand{\one}{{\mathbf 1}}
\renewcommand{\Re}{\text{Re}\,}
\renewcommand{\Im}{\text{Im}\,}
\providecommand{\abs}[1]{\left\lvert#1\right\rvert}
\providecommand{\norm}[1]{\lVert#1\rVert}
\title{Random matrix theory}
\author[Kargin - Yudovina]{Slava Kargin\\Statistical Laboratory, University of Cambridge\\{\tt V.Kargin@statslab.cam.ac.uk}\\
 Elena Yudovina\\Statistical Laboratory, University of Cambridge\\{\tt E.Yudovina@statslab.cam.ac.uk}}
\date{Michaelmas 2011}
\begin{document}
\maketitle

\tableofcontents

\section{Introduction}

Random matrix theory is usually taught as a sequence of several graduate courses; we have 16 lectures, so we will give a very brief introduction.

Some relevant books for the course:
\begin{itemize}
\item G. Anderson, A. Guionnet, O. Zeitouni. An introduction to random matrices. \cite{anderson_guionnet_zeitouni10}
\item A. Guionnet. Large random matrices: lectures on macroscopic asymptotics.
\item M. L. Mehta. Random matrices.
\end{itemize}

The study of random matrices originated in statistics, with the investigation of sample covariance matrices, and in nuclear physics, with Wigner's model of atomic nuclei by large random matrices.

A random matrix is a matrix with random entries. Let us see what sorts of questions we can ask about this object and what tools we can use to answer them.

Questions:\\
\vspace{0.2cm}
\parbox{4.5in}{Distribution of eigenvalues on the global scale. Typically, the histogram of eigenvalues looks something like this:} \qquad \parbox{2in}{\raisebox{-\height}{\begin{picture}(0,0)%
\includegraphics{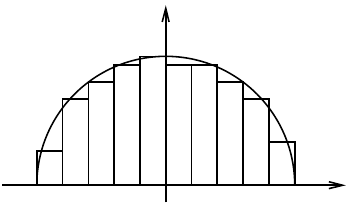}%
\end{picture}%
\setlength{\unitlength}{3947sp}%
\begingroup\makeatletter\ifx\SetFigFont\undefined%
\gdef\SetFigFont#1#2#3#4#5{%
  \reset@font\fontsize{#1}{#2pt}%
  \fontfamily{#3}\fontseries{#4}\fontshape{#5}%
  \selectfont}%
\fi\endgroup%
\begin{picture}(1674,972)(1264,-1621)
\end{picture}%
}}\\
What is the limiting shape of the histogram when the size of the matrix becomes large?

\vspace{0.2cm}
\parbox{4.5in}{Distribution of eigenvalues at the local scale. The histogram of spacings between the eigenvalues may look like this:} \qquad \parbox{2in}{\raisebox{-\height}{\begin{picture}(0,0)%
\includegraphics{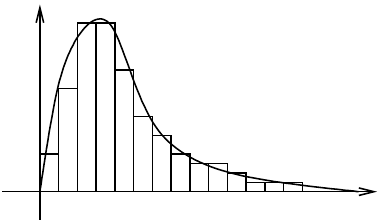}%
\end{picture}%
\setlength{\unitlength}{3947sp}%
\begingroup\makeatletter\ifx\SetFigFont\undefined%
\gdef\SetFigFont#1#2#3#4#5{%
  \reset@font\fontsize{#1}{#2pt}%
  \fontfamily{#3}\fontseries{#4}\fontshape{#5}%
  \selectfont}%
\fi\endgroup%
\begin{picture}(1824,1059)(1264,-1723)
\end{picture}%
}}\\
What is the limiting shape of this histogram?

\vspace{0.2cm}
Is there universality of these shapes with respect to small changes in the distribution of the matrix entries?\\
\vspace{0.2cm}
\parbox{2in}{Are the eigenvectors localized?} \qquad \parbox{4.5in}{\raisebox{-\height}{\begin{picture}(0,0)%
\includegraphics{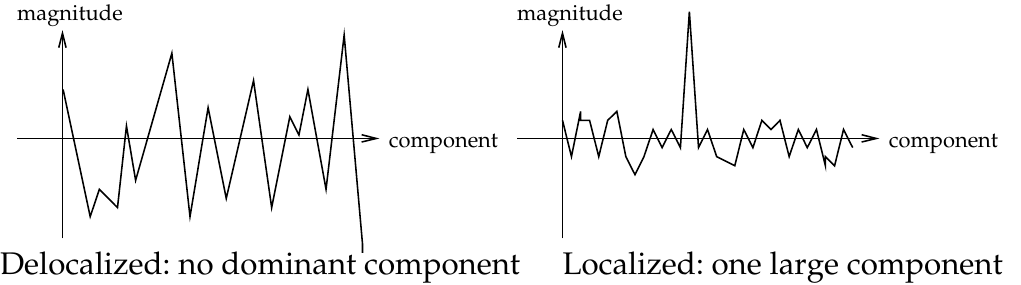}%
\end{picture}%
\setlength{\unitlength}{3947sp}%
\begingroup\makeatletter\ifx\SetFigFont\undefined%
\gdef\SetFigFont#1#2#3#4#5{%
  \reset@font\fontsize{#1}{#2pt}%
  \fontfamily{#3}\fontseries{#4}\fontshape{#5}%
  \selectfont}%
\fi\endgroup%
\begin{picture}(4884,1364)(4051,-1791)
\end{picture}%
}}\\

Approaches:
\begin{itemize}
\item Method of traces (combinatorial): for a function $f$, we have $\sum_{i=1}^N f(\lambda_i) = \Tr f(X)$, and the right-hand side can be studied using combinatorial methods.
\item Stieltjes transform method: we study the meromorphic function $\Tr\left(\frac{1}{X-z}\right)$, whose poles are the eigenvalues of $X$.
\item Orthogonal polynomials: in some cases, the distribution of eigenvalues can be written out explicitly and related to orthogonal polynomials, for which there is an extensive theory.
\item Stochastic differential equations: if the entries of the random matrix evolve according to some stochastic process described by an SDE, then the eigenvalues also evolve according to an SDE which can be analyzed.
\item Esoterica: Riemann-Hilbert, free probability, etc.
\end{itemize}

\newpage
\section{Method of Traces: Wigner random matrices}
Let $X$ be an $N \times N$ symmetric real-valued matrix. The matrix entries $X_{ij}$ are iid real-valued random variables (for $i \leq j$). Let $Z_{ij} = \sqrt{N} X_{ij}$. We assume $\BE Z_{ij} = 0$, $\BE Z_{ij}^2 = 1$, $\BE \abs{Z_{ij}}^k = r_k < \infty$ for all $k$ (and all $N$). (Most of these assumptions can be relaxed.)

\begin{definition}
The \emph{empirical measure of the eigenvalues} is $L_N = \frac1N \sum_{i=1}^N \delta_{\lambda_i}$, where $\lambda_i$ are eigenvalues of $X$.
\end{definition}

We will use the following notational convention: for a measure $\mu$ on $\BR$ and a function $f$, we write
\[
\langle \mu,f\rangle := \int_{\BR} f(x) \mu(dx).
\]
In particular, $\langle L_N, f \rangle = \frac1N \sum f(\lambda_i)$.

\begin{definition}
The \emph{density of states} $\ov{L_N}$ is the expected value of the random measure $L_N$.
\end{definition}
Note that the expectation is with respect to the randomness in $\{X_{ij}\}$.

Let $\sigma$ be the measure on $\BR$ with density $\frac{1}{2\pi} \sqrt{(4-x^2)_+}$ (so the support of $\sigma$ is confined to $[-2,2]$). The measure $\sigma$ is often called \emph{(Wigner's) semicircle law}. 
\begin{theorem}[Wigner]
For any bounded continuous function $f \in \CC_b(\BR)$,
\[
\BP(\abs{\langle L_N, f \rangle - \langle \sigma, f \rangle} > \epsilon) \to 0 ~~ \text{as $N \to \infty$}
\]
\end{theorem}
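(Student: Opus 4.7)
The plan is to use the method of moments, as the section heading suggests. I would first reduce the theorem to moment convergence: show that for every nonnegative integer $k$,
\[
\langle L_N, x^k\rangle \xrightarrow{\BP} \langle \sigma, x^k\rangle,
\]
where the right-hand side equals the Catalan number $C_{k/2}$ for even $k$ and $0$ for odd $k$. Since $\sigma$ is compactly supported and determined by its moments, moment convergence together with a uniform-in-$N$ tail bound on the eigenvalues will upgrade to convergence in probability against any $f \in \CC_b(\BR)$.

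The main computation is the identity
\[
\langle L_N, x^k\rangle = \frac{1}{N}\Tr(X^k) = \frac{1}{N^{1+k/2}} \sum_{i_1,\ldots,i_k} Z_{i_1 i_2} Z_{i_2 i_3} \cdots Z_{i_k i_1}.
\]
Taking expectations, only closed walks in which every edge is traversed an even number of times survive (since $\BE Z_{ij}=0$ and the $Z_{ij}$ are independent for $i\le j$). I would group walks by the isomorphism class of the pair (underlying graph, walk), and show that walks tracing out a tree with exactly $k/2$ edges, each crossed exactly twice, dominate: they contribute $N(N-1)\cdots(N-k/2)/N^{1+k/2} \to 1$ per tree, and the number of such rooted plane trees with $k/2$ edges is the Catalan number $C_{k/2}$. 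Any other graph shape has strictly fewer distinct vertices than $1+k/2$, hence contributes $O(N^{-1})$ after normalization. Odd moments vanish by parity. This yields $\BE\langle L_N, x^k\rangle \to \langle \sigma, x^k\rangle$.

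For convergence in probability I would bound the variance. Expanding $\BE\bigl[(\frac1N\Tr X^k)^2\bigr] - (\BE\frac1N\Tr X^k)^2$ as a sum over pairs of closed walks of length $k$, only ``connected'' pairs (those sharing at least one edge) contribute, since independent pairs cancel against the squared mean. A combinatorial argument analogous to the one above shows every connected pair has at most $k$ distinct vertices, giving a factor $N^k$ against a normalization $N^{2+k}$; hence $\Var\langle L_N, x^k\rangle = O(N^{-2})$, and Chebyshev's inequality delivers the desired convergence in probability for all polynomial test functions.

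The remaining obstacle is that the test function $f$ is merely bounded and continuous, not polynomial, while Wigner matrices can in principle have outlier eigenvalues beyond $[-2,2]$. I would handle this by approximation: fix $\varepsilon > 0$, pick $K$ large and a polynomial $p$ that uniformly approximates $f$ on $[-K,K]$ to within $\varepsilon$, and bound
\[
\abs{\langle L_N, f\rangle - \langle L_N, p\rangle} \le \varepsilon + 2\norm{f}_\infty \cdot \frac{1}{N}\#\{i : \abs{\lambda_i} > K\}.
\]
The eigenvalue count is controlled by $\frac{1}{N}\Tr(X^{2m})/K^{2m}$, which by the moment computation above is close in probability to $C_m/K^{2m} \le (4/K^2)^m$; choosing $K > 2$ and $m$ large makes this arbitrarily small. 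Combining with the polynomial case yields the theorem. I expect the moment-variance combinatorics to be the main technical work, while the extension to $\CC_b$ is standard once the tail estimate above is in place.
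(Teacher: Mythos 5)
Your proposal follows essentially the same route as the paper: compute $\BE\langle L_N,x^k\rangle$ via the closed-walk expansion and show tree walks contribute the Catalan numbers, bound the variance by a pair-of-walks argument, and then pass from polynomial test functions to general $f\in\CC_b$ by truncation, controlling $\frac1N\#\{|\lambda_i|>K\}$ with a Markov/moment bound of the form $\langle \ov{L_N},x^{2m}\rangle/K^{2m}\approx(4/K^2)^m$. The only minor differences are cosmetic (you claim the sharper $O(N^{-2})$ variance bound, which the paper relegates to an exercise while using $O(N^{-1})$ in the main text, and you truncate at a generic $K>2$ rather than the paper's fixed $K=5$).
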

In other words, $L_N \to \sigma$ weakly, in probability.

\begin{proof}
The proof relies on two lemmas. The first asserts that the moments of $\ov{L_N}$ converges to the moments of $\sigma$; the second asserts that the moments of $L_N$ converges to the moments of $\ov{L_N}$ in probability.
\begin{lemma}\label{lemma 1}
$\ov{L_N} \to \sigma$ in moments, i.e. as $N \to \infty$, for any $k\geq 0$ we have
\[
\langle \ov{L_N}, x^k \rangle \to \langle \sigma, x^k \rangle = \begin{cases}
0, & k \text{ odd}\\
\frac{1}{\frac k2 + 1} \binom{k}{k/2}, & k \text{ even}.
\end{cases}
\]
\end{lemma}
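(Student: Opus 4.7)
The plan is to convert the moment $\langle \ov{L_N}, x^k\rangle$ into an expected trace and carry out a combinatorial analysis of the resulting sum. Writing $\lambda_i^k$ summed over $i$ as $\Tr(X^k)$ and substituting $X_{ij} = Z_{ij}/\sqrt{N}$ gives
\[
\langle \ov{L_N}, x^k \rangle \;=\; \frac{1}{N}\,\BE\,\Tr(X^k) \;=\; \frac{1}{N^{1+k/2}} \sum_{i_1,\dots,i_k=1}^{N} \BE\!\left[Z_{i_1 i_2} Z_{i_2 i_3}\cdots Z_{i_k i_1}\right].
\]
Each tuple $(i_1,\dots,i_k)$ encodes a closed walk of length $k$ on the complete graph with loops on vertex set $\{1,\dots,N\}$. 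The first step is to group walks by their \emph{shape}: the partition of $\{1,\dots,k\}$ recording which $i_j$'s coincide, together with the induced multigraph on the set of distinct vertices visited.

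Since the $Z_{ij}$ are independent across distinct (unordered) pairs and have mean zero, the expectation vanishes unless every edge of the underlying multigraph is traversed at least twice. Hence if the shape uses $V$ distinct vertices and $E$ distinct edges, we must have $2E \le k$, i.e.\ $E \le k/2$. The number of labelings of a fixed shape as a tuple of indices in $\{1,\dots,N\}$ is $N(N-1)\cdots(N-V+1) \sim N^V$, while the expectation $\BE[Z_{i_1i_2}\cdots Z_{i_ki_1}]$ is bounded uniformly in $N$ by a product of $r_j$'s. Therefore the contribution of shapes with $V$ vertices is $O(N^{V-1-k/2})$. For any connected subwalk we have $V \le E+1 \le k/2 + 1$, with equality only if the multigraph is a tree and each edge is used exactly twice. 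All other shapes contribute $o(1)$.

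When $k$ is odd, equality $2E=k$ is impossible, so every shape gives $o(1)$ and $\langle \ov{L_N}, x^k\rangle \to 0$. When $k=2m$ is even, the surviving shapes are closed walks of length $2m$ on trees with $m$ edges that traverse each edge exactly twice; on each such tree-edge, the expectation factor is $\BE[Z_{ij}^2]=1$. Such walks are in bijection with rooted plane trees with $m$ edges, equivalently with Dyck paths of length $2m$, and the number of them is the Catalan number $C_m = \frac{1}{m+1}\binom{2m}{m}$. The number of index-labelings of each such tree is $N(N-1)\cdots(N-m) \sim N^{m+1}$, so the total contribution is
\[
\frac{1}{N^{1+m}} \cdot N^{m+1} \cdot C_m \cdot (1 + o(1)) \;\longrightarrow\; \frac{1}{m+1}\binom{2m}{m},
\]
which matches the formula for $\langle \sigma, x^{2m}\rangle$ claimed in the lemma.

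The main obstacle is the careful bookkeeping in the combinatorial step: one must argue cleanly that all non-tree shapes and all tree shapes with some edge used more than twice are negligible, and one must set up the bijection between length-$2m$ closed tree walks and Dyck paths to pin down the constant. Once that is in place, the identification with the semicircle moments is immediate; the evaluation $\langle \sigma, x^{2m}\rangle = C_m$ can be checked directly by the substitution $x = 2\cos\theta$.
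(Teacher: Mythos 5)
Your proposal follows essentially the same route as the paper: convert the moment to $\frac{1}{N}\BE\Tr(X^k)$, expand as a sum over closed walks, kill terms with a singly-traversed edge using independence and mean zero, show by vertex/edge counting that only trees with each edge traversed exactly twice survive in the limit, and identify their count with the Catalan number via the bijection with rooted plane trees and Dyck paths. The only difference is that you leave the Dyck-path enumeration as a cited fact (correctly flagged as the remaining bookkeeping), whereas the paper spells out the two bijections and derives $C_m = \binom{2m}{m} - \binom{2m}{m-1}$ by the reflection principle.
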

(Note that this is a statement about convergence in $\BR$.)

The even moments of $\sigma$ coincide with the so-called Catalan numbers, which frequently arise in various enumeration problems in combinatorics.
\begin{lemma}\label{lemma 2}
As $N \to \infty$, for any $k$ we have
\[
\BP(\abs{\langle L_N, x^k \rangle - \langle \ov{L_N}, x^k \rangle} > \epsilon) \to 0
\]
\end{lemma}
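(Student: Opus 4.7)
The plan is to apply Chebyshev's inequality to reduce the claim to a variance bound. Since $\langle L_N, x^k \rangle = \tfrac{1}{N}\Tr X^k$ and $\langle \ov{L_N}, x^k\rangle$ is its expectation,
\[
\BP\bigl(\abs{\langle L_N, x^k\rangle - \langle \ov{L_N}, x^k\rangle} > \epsilon\bigr) \leq \frac{\Var(\Tr X^k)}{N^2 \epsilon^2},
\]
so it suffices to show $\Var(\Tr X^k) = o(N^2)$. I will in fact aim for the stronger bound $\Var(\Tr X^k) = O(N)$, which is more than enough.

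The next step is to expand $\Tr X^k = N^{-k/2} \sum_{\vec i} Z_{i_1 i_2} \cdots Z_{i_k i_1}$ as a sum indexed by closed walks $\vec i = (i_1,\ldots,i_k)$ of length $k$ on $[N]$, so that
\[
\Var(\Tr X^k) = N^{-k} \sum_{\vec i,\, \vec j} \Cov\bigl(Z_{i_1 i_2}\cdots Z_{i_k i_1},\; Z_{j_1 j_2}\cdots Z_{j_k j_1}\bigr).
\]
The key observation is that this covariance vanishes unless two conditions hold simultaneously: (i) the two walks share at least one (unordered) edge of the complete graph on $[N]$, and (ii) every edge appearing in the combined multi-set of $2k$ edges appears with multiplicity at least two. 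Condition (ii) follows because independence of the $Z_{ij}$ and $\BE Z_{ij}=0$ force any expectation containing a lone $Z_{ij}$ factor (whether in $\BE[AB]$ or in $\BE[A]\BE[B]$) to vanish; condition (i) is necessary because edge-disjoint walks produce independent products.

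To count the admissible pairs, note that condition (ii) caps the number of distinct edges in the combined multi-set at $k$, while condition (i) forces the edge-union graph to be connected (each walk is connected and they share a vertex). A connected graph with $e \leq k$ edges has at most $e+1 \leq k+1$ vertices, so the number of admissible pairs $(\vec i,\vec j)$ is bounded by a $k$-dependent constant times $N^{k+1}$. Each individual covariance is uniformly bounded by a constant depending only on $r_2,\ldots,r_{2k}$, which are finite by hypothesis. Combining these estimates gives $\Var(\Tr X^k) \leq C_k N^{-k}\cdot N^{k+1} = C_k N$, so $\Var(\langle L_N, x^k\rangle) \leq C_k/N \to 0$, and Chebyshev finishes.

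The main obstacle is the combinatorial step (ii)–(i): carefully verifying that every edge of the combined multi-set must repeat at least twice for a nonzero covariance — in particular treating the case $\BE[A]=0$ and $\BE[B]=0$, where one instead needs $\BE[AB]\ne 0$ — and that the edge-union graph is connected. Once these are pinned down, the vertex count $v \leq e+1$ for a connected multigraph is immediate, and the moment bound on each covariance is routine since all $r_k$ are finite.
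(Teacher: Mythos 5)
Your proposal is correct and follows essentially the same route as the paper's proof: Chebyshev to reduce to a variance bound, the expansion of the variance as a sum of covariances indexed by pairs of closed walks, the observation that non-vanishing covariance forces every edge in the combined multiset to appear at least twice and the two walks to share an edge (hence the union graph is connected with at most $k$ edges and $k+1$ vertices), yielding $O(N^{k+1})$ contributing pairs and thus $\Var(\langle L_N, x^k\rangle) = O(N^{-1})$. The only cosmetic difference is that you normalize by $Z_{ij} = \sqrt{N}\,X_{ij}$ up front so that each covariance is $O(1)$, whereas the paper keeps the $X_{ij}$'s and records each covariance as $O(N^{-k})$; the bookkeeping is equivalent.
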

(It will be sufficient to show that the variance of $\langle L_N, x^k \rangle$ tends to 0.)

Altogether, Lemmas~\ref{lemma 1} and \ref{lemma 2} show that moments of $L_N$ converge to moments of $sigma$ in probability. In a general situation, the convergence of moments does not imply the weak convergence of measures. However, this is true for a wide class of limiting measures, in particular for the measures that are uniquely determined by its moments. See, for example, Theorem 2.22 in Van der Vaart's ``Asymptotic Statistics'' \cite{van_der_vaart98}. 

For $\sigma$, we can give a direct proof. (See the outline of the general argument below.) Supposing Lemmas~\ref{lemma 1} and \ref{lemma 2} to hold, the proof proceeds as follows:

Let $f \in \CC_b(\BR)$ and $M:=\sup |f(x)|$. Observe that for every polynomial $Q$,
\begin{eqnarray*}
\BP(\abs{\langle L_N - \sigma, f\rangle}>\epsilon) & \leq &\BP(\abs{\langle L_N - \sigma, Q\rangle}>\epsilon/2) + \BP(\abs{\langle L_N - \sigma, f-Q\rangle}>\epsilon/2)\\
&\leq& \BP(\abs{\langle L_N - \ov{L_N}, Q\rangle}>\epsilon/4) + \BP(\abs{\langle \ov{L_N} - \sigma, Q\rangle}>\epsilon/4) \\
&& + \BP(\abs{\langle L_N - \sigma, f-Q\rangle}>\epsilon/2).
\end{eqnarray*}
 We claim that for every $\delta>0$ there exists a polynomial $Q$ and an integer $N_0$ so that the third term is less than $\delta$ for all $N\geq N_0$. Given this polynomial $Q$, the first two terms converge to 0 as $N \to \infty$ as a result of Lemmas~\ref{lemma 2} and \ref{lemma 1} respectively, and this proves the theorem. 

To prove that the claim holds, choose (by using the Weierstrass approximation theorem) $Q=\sum_{k=0}^K a_k x^k$ such that 
\begin{equation}
\sup_{\abs{x} \leq 5} \abs{f(x) - Q(x)} \leq \varepsilon/8.  
\end{equation}
Let $A=\max\{|a_k|\}$.

Note that since $\sigma$ supported on [-2,2], hence $|\langle \sigma,f-Q\rangle|\leq \varepsilon/8 <\varepsilon/4 $. Therefore, the claim will follow if for every $\delta>0$, we show that  there exists $N_0$, such that
\begin{equation}
\mathbb{P}\{|\langle L_N,f-Q\rangle|>\varepsilon/4 \}\leq\delta
\end{equation}
for all $N>N_0$. 

Note that
\begin{eqnarray*}
\mathbb{P}\{|\langle L_N,f-Q\rangle|>\varepsilon/4 \}&\leq&\mathbb{P}\{|\langle L_N,(f-Q)1_{|x|>5}\rangle|>\varepsilon/8 \} \\
&\leq& \mathbb{P}\left\{|\langle L_N,1_{|x|>5}\rangle|>\frac{\varepsilon}{8M(K+1)} \right\}\\
& & + \sum_{k=0}^K  \mathbb{P}\left\{|\langle L_N,|x|^k 1_{|x|>5}\rangle|>\frac{\varepsilon}{8A(K+1)} \right\}.
\end{eqnarray*}

 Let $C:=8(K+1)\max\{M,A\}$ and observe by the Markov inequality that
\begin{eqnarray*}
T_k:=\BP(\langle L_N, \abs{x}^k \one_{\{\abs{x} > 5\}} \rangle > \varepsilon/C) &\leq& \frac{C}{\varepsilon} \BE \langle L_N, \abs{x}^k \one_{\{\abs{x} > 5\}} \rangle \leq \frac{C\langle \ov{L_N}, x^{2k} \rangle}{\varepsilon 5^k}\\
&\leq&\frac{C}{\varepsilon} \left(\frac{4}{5}\right)^k. 
\end{eqnarray*}
for all integer $k\geq 0$ and $N\geq N_0(k)$. The last inequality follows by examining the form of the moments of $\sigma$ (to which the moments of $L_N$ converge).
 
 Note that $T_k$ is an increasing sequence. For every $\delta>0$ it is possible to find $L\geq K$ such that 
 \begin{equation*}
 T_L \leq \frac{\delta}{K+1}
 \end{equation*}
 for all $N\geq N_0=N_0(L)$.
 Then $T_k\leq \frac{\delta}{K+1}$ for all $k=0,\ldots, K$, which implies that 
 \begin{equation*}
\mathbb{P}\{|\langle L_N,f-Q\rangle|>\varepsilon/4 \}
\end{equation*}
This completes the proof. $\square$
\end{proof}   

Here is the outline of the high-level argument that shows that the convergence of moments implies the weak convergence of probability measures provided that the limiting measure is determined by its moments. Suppose for example that we want to show that $\ov{L_N}$ converges to $\sigma$ weakly.

Since the second moments of $\ov{L_N}$ are uniformly bounded, hence this sequence of measures is uniformly tight, by Markov's inequality. By Prohorov's theorem, each subsequence has a further subsequence that converges weakly to a limit $L$. It is possible to show that the moments of $L$ must equal to the limit of the moments of $\ov{L_N}$ (see Example 2.21 in van der Vaart). Since $\sigma$ is uniquely determined by its moments, hence $L$ must equal $\sigma$ for every choice of the subsequence. This is equivalent to the statement that  $\ov{L_N}$ converges weakly to $\sigma$.

We now move on to the proofs of the lemmas:
\begin{proof}[Proof of Lemma~\ref{lemma 1}]
Write
\[
\langle \ov{L_N}, x^k \rangle = \BE \left( \frac1N \sum_{i=1}^N \lambda_i^k \right) = \frac1N \BE \Tr(X^k) = \frac1N \sum_{\bi = (i_1, i_2, \dotsc, i_k)} \BE(X_{i_1, i_2} X_{i_2, i_3} \dotsc X_{i_k i_1}).
\]
Let $T_{\bi} = \BE(X_{i_1, i_2} X_{i_2, i_3} \dotsc X_{i_k i_1})$.

To each \emph{word} $\bi = (i_1, i_2, \dotsc, i_k)$ we associate a graph and a path on it. The vertices of the graph are the distinct indices in $\bi$; two vertices are connected by an edge if they are adjacent in $\bi$ (including $i_k$ and $i_1$); and the path simply follows the edges that occur in $\bi$ consecutively, finishing with the edge $(i_k, i_1)$. The important feature is that the expected value of $T_{\bi}$ depends only on the (shape of) the associated graph and path, and not on the individual indices.

\begin{observation}
If the path corresponding to $\bi$ traverses some edge only once, then $\BE(T_{\bi}) = 0$ (since we can take out the expectation of the corresponding term).
\end{observation}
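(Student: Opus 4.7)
The plan is to isolate the lone edge's contribution from the rest of the product and use independence plus the mean-zero assumption. Concretely, suppose $\bi = (i_1, \dotsc, i_k)$ has an edge $e = \{a,b\}$ that is traversed exactly once by the closed path $i_1 \to i_2 \to \dotsb \to i_k \to i_1$. Let $\ell \in \{1,\dotsc,k\}$ be the unique index with $\{i_\ell, i_{\ell+1}\} = \{a,b\}$ (indices mod $k$).

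Next I would appeal to the independence hypothesis. The entries $\{X_{ij} : i \leq j\}$ are independent, and by symmetry $X_{ij} = X_{ji}$, so upon rewriting each factor as $X_{\min(i_m, i_{m+1}), \max(i_m, i_{m+1})}$ the product becomes a product over (unordered) edges of the path, with multiplicities equal to the number of traversals of each edge. Since $e$ is traversed only once, the factor $X_{ab}$ appears with multiplicity $1$ and is therefore independent of the product of all remaining factors. Consequently
\[
T_{\bi} = \BE(X_{i_1 i_2} X_{i_2 i_3} \dotsm X_{i_k i_1}) = \BE(X_{ab}) \cdot \BE\left(\prod_{m \neq \ell} X_{i_m i_{m+1}}\right).
\]

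Finally, $\BE(X_{ab}) = \frac{1}{\sqrt{N}} \BE(Z_{ab}) = 0$ by assumption, so $T_{\bi} = 0$, as claimed. The main step requiring care is the bookkeeping that translates ``edge traversed once by the path'' into ``the random variable $X_{ab}$ appears exactly once in the product,'' which is precisely where the symmetry convention $X_{ij} = X_{ji}$ must be accounted for; once this is done, the independence and mean-zero assumptions finish the argument immediately.
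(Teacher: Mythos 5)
Your proof is correct and follows exactly the approach the paper intends with its parenthetical "take out the expectation of the corresponding term": identify the single-traversal edge, observe via independence of $\{X_{ij}:i\leq j\}$ (with the symmetry convention) that its factor is independent of the remaining product, and factor out the zero mean. The only thing you added is careful bookkeeping, which is a sensible way to make the one-line claim airtight.
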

Therefore, we are only interested in those graph-path pairs in which every edge is traversed at least twice. In particular, all such graphs will have at most $k/2$ edges, and (because the graph is connected) at most $k/2+1$ vertices.

\begin{exercise}
\label{exercise_moments}
If the graph associated to $\bi$ has $\leq k/2$ vertices, then $\BE(T_{\bi}) \leq c_k \BE \abs{X_{ij}}^k \leq \frac{B_k}{N^{k/2}}$ for some constants $c_k$, $B_k$. (Use H\"older's inequality.)
\end{exercise}
Now, for a fixed graph-path pair with $d$ vertices, the number of words giving rise to it (i.e. the number of ways to assign indices to the vertices) is $N(N-1)\dots(N-d+1) \approx N^d$. In addition, the number of graph-path pairs with $\leq k/2+1$ vertices and $\leq k/2$ edges is some finite constant depending on $k$ (but not, obviously, on $N$).

Consequently, by Exercise \ref{exercise_moments}  the total contribution of the words $\bi$ whose graphs have $\leq k/2$ vertices to $\frac1N \sum_{\bi} \BE(T_{\bi})$ is $O(N^{-1}N^{k/2}N^{-k/2})=O(N^{-1})$.

Therefore, we only care about graphs with exactly $k/2+1$ vertices, hence exactly $k/2$ edges, each of which is traversed twice. For each such graph, $\frac1N \BE(T_{\bi}) = \frac1N \BE(X_{ij}^2)^{k/2} = \frac{1}{N^{k/2+1}}$, while the number of ways to label the vertices of such a graph-path pair is $N(N-1)\dotsc(N-\frac k2 -1) \approx N^{k/2 + 1}$.
We conclude that the $k$th moment $\langle \ov{L_N},x^k \rangle$, for $k$ even, converges to the number of equivalence classes of pairs $G,P$ where $G$ is a tree with $k/2$ edges and $P$ is an oriented path along $G$ with $k$ edges traversing each edge of $G$ twice. (For $k$ odd it converges to 0.) 

Here the equivalence is with respect to a re-labelling of vertices. We now count the number of these objects.

An \textit{oriented tree} is a tree (i.e., a graph without closed paths of distinct edges) embedded in a
plane. A \textit{rooted tree} is a tree that has a special edge (``root''), which is oriented, that
is, this edge has a start and end vertices. Two oriented rooted tree are
isomorphic if there is a homeomorphism of the corresponding
planes which sends one tree to another one in such a way that the root goes to
the root and the orientations of the root and the plane are preserved.
Figure 1 shows an example of two trees that
are equivalent in the graph-theoretical sense as rooted trees, but not equivalent as oriented rooted trees.
(There is no isomorphism of planes that sends the first tree to the second
one.)

\begin{figure}[htbp]
\includegraphics[width=8cm]{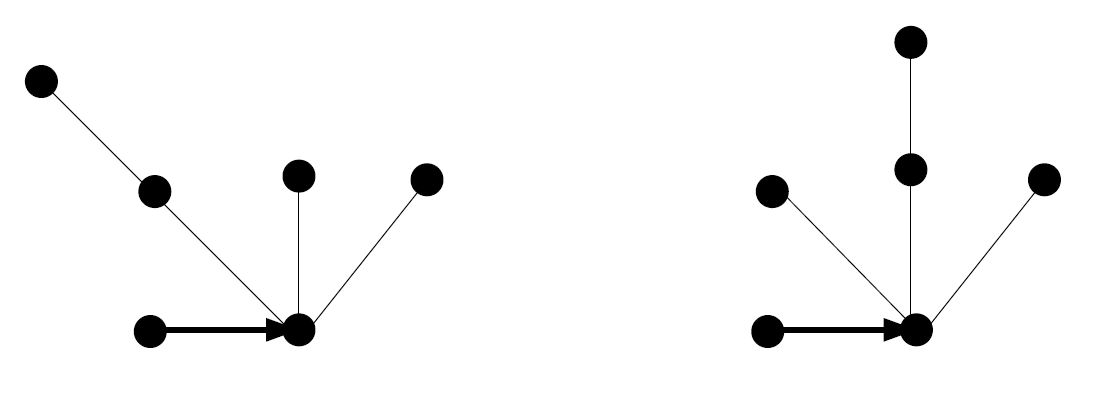} \label{figure_rooted_noniso}
\caption{Two non-isomorphic oriented rooted trees}
\end{figure}

Figure \ref{figure_rooted_trees} shows all non-isomorphic oriented rooted trees with 3 edges. 

\begin{figure}[tbph]
\includegraphics[width=10cm]{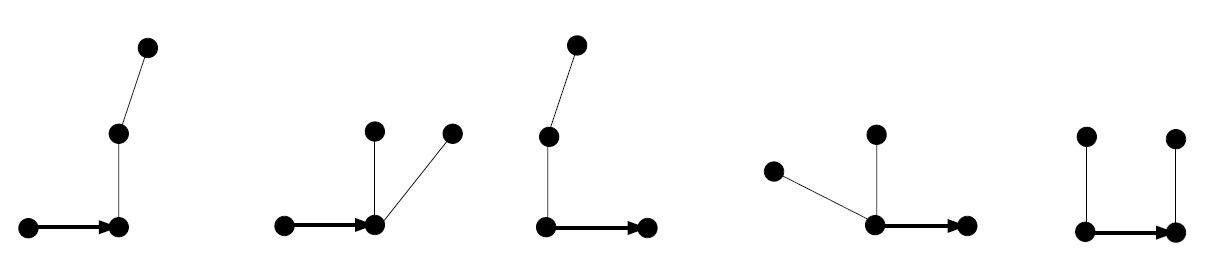}
\caption{Oriented rooted trees with 3 edges}
\label{figure_rooted_trees}
\end{figure}

\begin{claim}
There exists a bijection between equivalence classes of $(G,P)$ where $G$ is a tree with $k/2$ edges and $P$ is an oriented path along $G$ with $k$ edges traversing each edge of $G$ twice, and non-isomorphic oriented rooted trees.
\end{claim}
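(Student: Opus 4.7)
The plan is to exhibit explicit maps in both directions and then check they are mutually inverse.

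Forward direction: Given an equivalence class of $(G,P)$ with $P=(i_1,i_2,\dotsc,i_k,i_1)$, I take the first edge of $P$, namely $(i_1,i_2)$ oriented from $i_1$ to $i_2$, to be the root. Each edge of $G$ is traversed exactly twice by $P$; in particular, at every vertex $v$ each incident edge has a well-defined moment of first traversal by $P$. Listing the edges incident to $v$ in the order of their first traversal yields a cyclic order at $v$. Declaring these cyclic orders at every vertex to be the planar embedding (and keeping the chosen root) produces an oriented rooted tree. Relabelling the indices does not change this embedded tree up to isomorphism, so the map descends to equivalence classes.

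Inverse direction: Given an oriented rooted tree, perform the canonical depth-first traversal. Start at the tail of the root edge and cross it; at each subsequent visit to a vertex $v$, use the next edge at $v$ in the planar cyclic order that has not yet been used, and if no such edge remains, backtrack along the edge by which $v$ was first reached. Since the underlying graph is a tree, this walk uses every edge exactly twice and returns to the starting vertex, producing a pair $(G,P)$ whose equivalence class depends only on the isomorphism class of the oriented rooted tree.

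The verification that these two constructions are mutual inverses proceeds by induction on $k/2$. One checks that in the forward map the first edge of $P$ out of $i_2$ explores the first subtree below $i_2$ in the planar order, and that $P$ returns to $i_2$ only after that subtree is fully explored (because $G$ is a tree, any return to $i_2$ along a new edge would create a cycle); by the induction hypothesis the walk restricted to each subtree corresponds precisely to the DFS traversal of that subtree with its inherited embedding.

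The main obstacle is justifying the well-definedness of the cyclic order at each vertex in the forward map: one must rule out that two different edges at $v$ could be ``first used'' at ambiguous moments, and one must show that this cyclic order is exactly what the DFS in the inverse map reproduces. Both points follow from the same tree fact used above: once $P$ leaves $v$ along a new edge into a subtree, it must finish exploring that subtree before it can reenter $v$, so the new edges at $v$ are opened in the very order prescribed by the planar embedding. With this lemma in hand the bijection is immediate.
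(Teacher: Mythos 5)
Your proposal is correct and takes essentially the same approach as the paper: both directions of the bijection are the same — you read off a planar (cyclic) order at each vertex from the order of first traversal and root at the first edge, while the inverse is the canonical DFS walk — and the paper's phrasing of ``draw edges clockwise in the order traversed'' is just a concrete rendering of your cyclic-order construction. Your explicit statement of the subtree-exploration lemma, used to justify well-definedness and the induction, is a welcome elaboration of the paper's ``it is easy to see'' but does not change the argument.
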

\begin{proof}
Given a path $P$ on $G$, we embed $G$ into the plane as follows: put the starting vertex at the origin, and draw unit-length edges out of it, clockwise from left to right, in the order in which they are traversed by $P$. Continue for each of the other vertices.

Conversely, given an embedding of $G$ with a marked directed edge, we use that edge as the starting point of the walk $P$ and continue the walk as follows: When leaving a vertex, pick the next available distance-increasing edge in the clockwise direction and traverse it. If there are no available distance-increasing edges, go back along the unique edge that decreases distance to the origin. It is easy to see that every edge will be traversed exactly twice.
\end{proof}

We can think of the embedded tree in terms of its thickening -- ``fat tree'' or ``ribbon graph''. Then this is actually a rule for the traversal of the boundary of the fat graph in the clockwise direction. 
Figure \ref{figure_fat_tree} shows an example of a fat tree. 

\begin{figure}[tbph]
\begin{center}
\includegraphics[width=4cm]{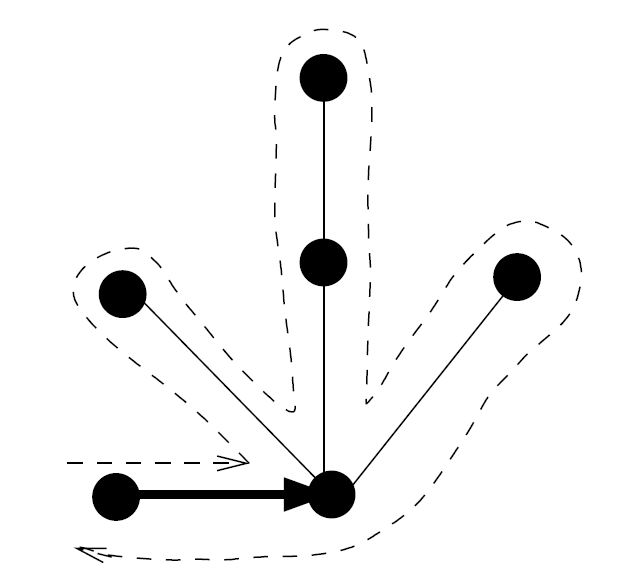}
\end{center}
\caption{A fat tree}
\label{figure_fat_tree}
\end{figure}

\begin{claim}
The oriented rooted trees are in bijection with \emph{Dick paths}, i.e. paths of a random walk $S_n$ on $\BZ$ such that $S_0 = S_k = 0$ and $S_j \geq 0$ for all $j = 0,1,\dotsc,k$.
\end{claim}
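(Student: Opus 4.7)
The plan is to exhibit an explicit bijection via the contour-walk height function. Given an equivalence class of oriented rooted trees with $n = k/2$ edges, fix any representative, run the clockwise boundary traversal of the associated fat tree beginning along the oriented root edge (this is exactly the walk $P$ constructed in the previous claim), and define $S_j$ to be the graph distance from the current vertex (after $j$ steps of the walk) to the starting vertex of the root edge. The previous claim guarantees that the walk visits each of the $n$ edges exactly twice and begins and ends at the starting vertex of the root edge, so we obtain a sequence $(S_j)_{j=0}^{k}$ of length $k+1 = 2n+1$.

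The first step is to verify that $(S_j)$ is a path of the required form. Each step of the walk traverses a single edge, either outward from the currently explored subtree (increasing the distance to the starting vertex by $1$) or inward back along the edge just used to arrive (decreasing the distance by $1$), so $S_{j+1} - S_j \in \{+1,-1\}$. The walk starts at the starting vertex, so $S_0 = 0$; it ends there, so $S_k = 0$; and $S_j \geq 0$ for all $j$ since $S_j$ is a nonnegative integer distance. Independence of the representative (i.e.\ invariance under the equivalence relation) is immediate because the distances and the clockwise cyclic order of edges at each vertex are preserved by any isomorphism of oriented rooted trees.

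Next, I construct the inverse. Given a path $(S_j)_{j=0}^{k}$ with $S_0 = S_k = 0$, $S_j \geq 0$, and $|S_{j+1}-S_j| = 1$, build a plane tree with a moving pointer: start with a single vertex $v_0$ and place the pointer at $v_0$; for each up-step $S_j \to S_j + 1$, create a new child of the vertex currently pointed at, placing it immediately clockwise of all previously-created children of that vertex, and move the pointer onto the new child; for each down-step, move the pointer back to the parent. Since $S_0 = 0$ forces $S_1 = 1$, the first step is necessarily an up-step, so a first edge is created; designate it the root edge, oriented from $v_0$ to the first child. Because $S_k = 0$ the pointer returns to $v_0$, and because each up-step creates exactly one edge, the resulting oriented rooted tree has exactly $n$ edges.

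The main obstacle is verifying that these two constructions are mutually inverse. The issue reduces to checking that the ``pick the next available distance-increasing edge clockwise'' rule from the previous claim agrees edge-for-edge with the ``new child placed immediately clockwise of all previous children'' rule used in the inverse. I would do this by induction on $j$: after $j$ steps, both constructions have explored the same edges and placed them in the same clockwise cyclic order at each visited vertex, so the $(j+1)$st step, whether dictated by the sign of $S_{j+1}-S_j$ or by the planar structure of the tree under construction, produces the same edge in both directions. This identifies the two constructions as inverse bijections, and gives the claimed correspondence.
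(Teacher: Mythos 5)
Your proposal is correct and follows essentially the same approach as the paper: read off $S_j$ as the graph distance from the root while traversing the boundary of the fat tree, and invert by turning each up-step into a new child placed clockwise-last and each down-step into a retreat to the parent. You simply spell out the bookkeeping (the pointer, the clockwise placement, the inductive check that the two constructions are mutually inverse) in more detail than the paper, which states the correspondence in two sentences and leaves the verification implicit.
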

\begin{proof}
Let the embedding have unit-length edges. The random walk $S_n$ is simply measuring the distance (along the graph) of the $n$th vertex in the path from the root (the vertex at the source of the marked edge). If $S_n$ is increasing, we need to create a new edge to the right of all previously existing ones; if $S_n$ is decreasing, we need to follow the unique edge leading from the current vertex towards the root (``towards'' in the sense of graph distance).
\end{proof}

An example of this bijection is
shown in Figure \ref{figure_Dick_bijection}.

\begin{figure}[bpht]
\begin{center}
\includegraphics[width=7cm]{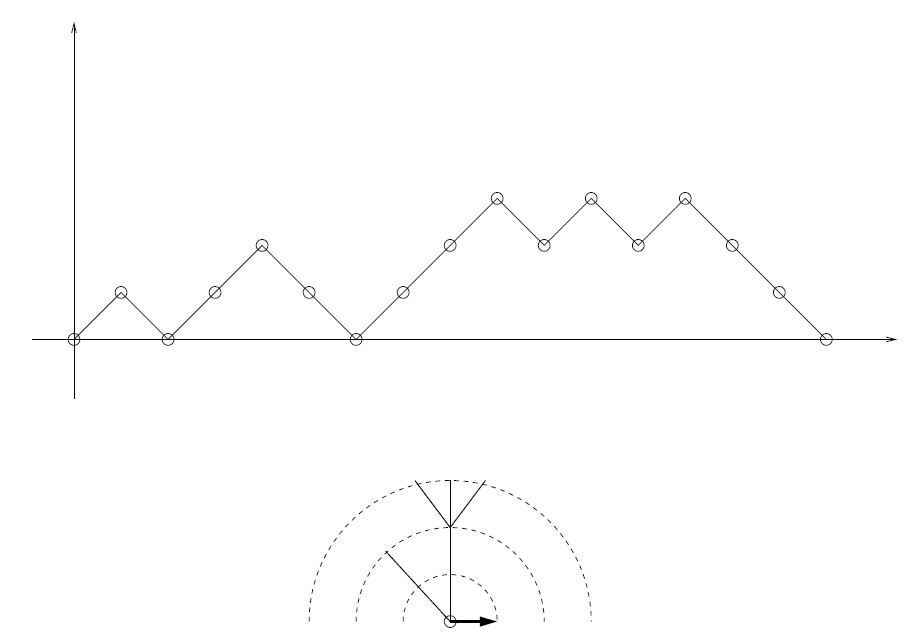}
\end{center}
\caption{A Dick path and the corresponding oriented rooted tree}
\label{figure_Dick_bijection}
\end{figure}

We now count the number of Dick paths as follows:
\begin{multline*}
\#\{\text{all paths with $S_0 = S_k = 0$}\} -\\
\#\{\text{paths with $S_0 = S_k = 0$ and $S_j = -1$ for some $j \in \{1,\dotsc,k-1\}$}\}
\end{multline*}
The first term is easily seen to be $\binom{k}{k/2}$ (we have $k$ jumps up and $k$ jumps down). For the second term, we argue that such paths are in bijection with paths starting at $0$ and ending at $-2$. Indeed, let $j$ be the last visit of the path to $-1$, and consider reflecting the portion of the path after $j$ about the line $y=-1$: it is easy to see that this gives a unique path terminating at $-2$ with the same set of visits to $-1$. The number of paths with $S_0 = 0$ and $S_k = -2$ is easily seen to be $\binom{k}{k/2-1}$ (we have $k-1$ jumps up and $k+1$ jumps down), so we conclude
\[
\lim_{N \to \infty} \langle {\ov L_N}, x^k \rangle = \begin{cases}
0, & k \text{ odd}\\
\binom{k}{k/2} - \binom{k}{k/2-1} = \frac{1}{k/2+1}\binom{k}{k/2}, & k \text{ even}
\end{cases} \qedhere
\]
\end{proof}

\begin{proof}[Proof of Lemma \ref{lemma 2}]
By Chebyshev's inequality, it suffices to show
\[
\Var(\langle L_N, x^k \rangle) \to 0 \text{ as $N \to \infty$}.
\]
We compute
\[
\Var(\langle L_N, x^k \rangle) = \frac{1}{N^2} \sum_{\bi,\bj} \BE(T_\bi T_\bj) - \BE(T_\bi) \BE(T_\bj) = \frac{1}{N^2} \sum_{\bi,\bj} \Cov(T_\bi,T_\bj).
\]
We associate a graph $G$ and a pair of paths $P_1 = P_1(\bi), P_2 = P_2(\bj)$ with the pair $\bi$, $\bj$: the vertices are the union of the indices in $\bi$ and $\bj$, the edges are the pairs $(i_s,i_{s+1})$ and $(j_t,j_{t+1})$ (with the convention $k+1=1$), the first path traverses the edges of $\bi$ in order, the second path traverses the edges of $\bj$ in order.

{\bf Observations:}
\begin{enumerate}
\item The graph $G$ may end up disconnected, but the corresponding covariance is 0.
\item In fact, for the covariance to be non-zero, every edge of $G$ mush be traversed at least twice by the union of $P_1$ and $P_2$. In particular, $G$ has at most $k$ edges, and at most $k+1$ vertices.
\item The number of labellings of the vertices of an equivalence class $(G,P_1,P_2)$ with at most $k+1$ vertices is at most $N^{k+1}$.
\item The number of equivalence classes of triples $(G,P_1,P_2)$ with at most $k+1$ vertices and $k$ edges is a finite constant (depending on $k$, but not on $N$).
\item Each $\Cov(T_\bi, T_\bj)$ is bounded by $O(N^{-k})$.
\end{enumerate}

We conclude that $\Var(\langle L_N, x^k \rangle) = O(N^{-1}) \to 0$ as required.
\end{proof}

\begin{exercise}
Show that in fact $\Var(\langle L_n, x^k \rangle) = O(N^{-2})$ by showing that the terms with $k+1$ vertices end up having covariance 0. (This is useful for showing almost sure convergence rather than convergence in probability, because the sequence of variances is summable.)
\end{exercise}

\subsection{Scope of the method of traces}

There are various ways in which the above analysis could be extended:
\begin{enumerate}
\item Technical extensions:
\begin{itemize}
\item We assumed that $Z_{ij} = \sqrt{N} X_{ij}$ has all moments. This is in fact unnecessary: we could truncate the entries $X_{ij}$ and use the fact that the eigenvalues of a matrix are Lipschitz with respect to the entries. All that is necessary is $\BE Z_{ij} = 0$, $\BE Z_{ij}^2 = 1$.
\item If $\BE Z_{ij}^2 = \sigma_{ij}$ depends on $i$ and $j$, then in general there is weak convergence $L_N \to \mu$ for some limiting measure $\mu$ (i.e., the variance of $L_N$ tends to 0), but $\mu$ need not be the semicircle law.
\end{itemize}
\item We can also find the laws for some other ensembles:
\begin{itemize}
\item
The \emph{Wishart ensemble} is $X^T X$ where $X$ is an $N \times M$ matrix with iid entries, $N \to \infty$ and $M/N \to \lambda$. (Moment assumptions as for the Wigner ensemble.) The limit law for the empirical distributions of its eigenvalues is the \emph{Marchenko-Pastur} law (see example sheet 1), given by
\[
f(x) = \frac{1}{2\pi x} \sqrt{(b-x)(x-a)}, \quad a = (1-\sqrt{\lambda})^2,~ b = (1+\sqrt{\lambda})^2
\]
plus an atom at 0 if $\lambda < 1$ (i.e. if $X^T X$ does not have full rank). This law looks like this:\\
\begin{center}
\begin{picture}(0,0)%
\includegraphics{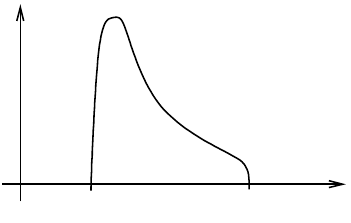}%
\end{picture}%
\setlength{\unitlength}{3947sp}%
\begingroup\makeatletter\ifx\SetFigFont\undefined%
\gdef\SetFigFont#1#2#3#4#5{%
  \reset@font\fontsize{#1}{#2pt}%
  \fontfamily{#3}\fontseries{#4}\fontshape{#5}%
  \selectfont}%
\fi\endgroup%
\begin{picture}(1674,1060)(1264,-1709)
\put(2429,-1669){\makebox(0,0)[lb]{\smash{{\SetFigFont{10}{12.0}{\rmdefault}{\mddefault}{\updefault}{\color[rgb]{0,0,0}$b$}%
}}}}
\put(1679,-1669){\makebox(0,0)[lb]{\smash{{\SetFigFont{10}{12.0}{\rmdefault}{\mddefault}{\updefault}{\color[rgb]{0,0,0}$a$}%
}}}}
\end{picture}%

\end{center}

This is more ``useful'' than Wigner analysis, because $X^T X$ can be thought of as sample covariances. For example, we can check whether we got something that looks like noise (in this case eigenvalues of the covariance matrix ought to approximately follow the Marchenko-Pastur law) or has eigenvalues which are outliers. For more on this, see  N. El Karoui, \cite{el_karoui08}, ``Spectrum estimation for large dimensional covariance matrices using random matrix theory'', \emph{Annals of Statistics} 36:6 (2008), pp. 2757-2790.
\end{itemize}
\item Behaviour of the largest eigenvalue: consider $\frac1N \BE \Tr(X_N^{k_N})$, where the moment $k_N$ depends on $N$. If $k_N \to \infty$ as $N \to \infty$, this is dominated by $\frac1N \lambda_{\max}^{k_N}$. The difficulty with the analysis comes from the fact that if $k_N \to \infty$ quickly, then more graphs have nonnegligible contributions, and the combinatorics becomes rather nasty. E.g., F\"{u}redi-Koml\'{o}s \cite{furedi_komlos81} showed that (in particular) $\BP(\lambda_{\max} > 2+\delta) \to 0$ as $N \to \infty$. (In fact, they showed this with some negative power of $N$ in the place of $\delta$.) Soshnikov (in \cite{sinai_soshnikov98} and \cite{soshnikov99})  has extended this analysis to determine the asymptotic distribution of the largest eigenvalue of the Wigner ensemble.
\end{enumerate}

However, the trace method has various limitations, in particular:
\begin{enumerate}
\item Difficult to say anything about the local behaviour of the eigenvalues (e.g., their spacings) away from the edge of the distribution (``in the bulk''), because eigenvalues in the bulk aren't separated by moments;
\item Difficult to get the speed of convergence, and
\item Says nothing about the distribution of eigenvectors.
\end{enumerate}

\newpage

\section{Stieltjes transform method}
\subsection{Stieltjes transform}
\begin{definition}
Suppose $\mu$ is a nonnegative finite measure on $\BR$. The Stieltjes transform of $\mu$ is
\[
g_\mu(z) = \int_\BR \frac{1}{x-z} \mu(dx), \quad z \in \BC \setminus \BR.
\]
\end{definition}

If all the moments of $\mu$ are finite (and the resulting series converges), we can rewrite this as
\[
g_\mu(z) = \int_\BR -\frac1z \frac1{1-x/z} \mu(dx) = -\frac1z \sum_{k=0}^\infty \frac{m_k}{z^k},
\]
where $m_k = \int x^k \mu(dx)$ is the $k$th moment of $\mu$.

If $\mu = L_N$, then
\[
g_{L_N}(z) = \frac1N \sum_{i=1}^N \frac1{\lambda_i-z} = \frac1N \Tr \frac{1}{X_N - zI},
\]
where $(X_N-z)^{-1}$ is the \emph{resolvent} of $X_N$, for which many relations are known.

The important thing about the Stieltjes transform is that it can be inverted.
\begin{definition} A measure $\mu$ on $\BR$ is called a \textit{sub-probability measure} if $\mu(\BR) \leq 1$.
\end{definition}  
\begin{theorem}
Let $\mu$ be a sub-probability measure. Consider an interval $[a,b]$ such that $\mu\{a\} =\mu\{b\} = 0$. Then
\[
\mu[a,b] = \lim_{\eta \to 0} \int_a^b \frac1\pi \Im g_\mu(x+i \eta) dx
\]
\end{theorem}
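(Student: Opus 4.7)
The plan is to write the integrand explicitly, exchange the order of integration by Fubini, evaluate the resulting elementary integral in $x$, and then pass to the limit $\eta \to 0$ by dominated convergence. The hypothesis $\mu\{a\} = \mu\{b\} = 0$ will then patch up the boundary.

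First I would note that
\[
\Im g_\mu(x+i\eta) = \Im \int_\BR \frac{1}{t-x-i\eta} \mu(dt) = \int_\BR \frac{\eta}{(t-x)^2 + \eta^2} \mu(dt),
\]
so the right-hand side of the inversion formula equals
\[
\frac{1}{\pi} \int_a^b \int_\BR \frac{\eta}{(t-x)^2+\eta^2} \mu(dt)\, dx.
\]
The integrand is nonnegative, so Tonelli lets me swap the two integrals, and the inner integral in $x$ is the primitive
\[
\frac{1}{\pi} \int_a^b \frac{\eta}{(t-x)^2+\eta^2} dx = \frac{1}{\pi}\left[\arctan\frac{b-t}{\eta} - \arctan\frac{a-t}{\eta}\right] =: \phi_\eta(t).
\]

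Next I would analyze the pointwise limit of $\phi_\eta(t)$ as $\eta \to 0^+$. For $t \in (a,b)$ both arguments $\tfrac{b-t}{\eta}$ and $\tfrac{a-t}{\eta}$ tend to $+\infty$ and $-\infty$ respectively, so $\phi_\eta(t) \to 1$. For $t < a$ or $t > b$ both arctangents tend to the same value ($-\pi/2$ or $+\pi/2$), so $\phi_\eta(t) \to 0$. At the two endpoints $t \in \{a,b\}$ one of the arctangents vanishes and the other tends to $\pm \pi/2$, so $\phi_\eta(t) \to 1/2$. Thus $\phi_\eta \to \one_{(a,b)} + \tfrac{1}{2}\one_{\{a,b\}}$ pointwise.

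Finally I would invoke dominated convergence: since $|\arctan| \leq \pi/2$, we have $|\phi_\eta(t)| \leq 1$ uniformly in $\eta$ and $t$, and the constant function $1$ is $\mu$-integrable because $\mu$ is a sub-probability measure. Therefore
\[
\lim_{\eta \to 0} \int_a^b \frac{1}{\pi} \Im g_\mu(x+i\eta)\, dx = \int_\BR \left(\one_{(a,b)}(t) + \tfrac{1}{2}\one_{\{a,b\}}(t)\right) \mu(dt) = \mu((a,b)) + \tfrac{1}{2}\mu(\{a,b\}),
\]
and the hypothesis $\mu\{a\} = \mu\{b\} = 0$ collapses the right-hand side to $\mu([a,b])$. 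The only point that requires care is the endpoint behaviour of $\phi_\eta$; everything else is a routine application of Fubini and dominated convergence, neither of which poses any real obstacle here thanks to the uniform bound $|\phi_\eta| \leq 1$ and the finiteness of $\mu$.
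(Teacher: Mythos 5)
Your proof is correct, and it coincides with the ``shorter version'' that the paper sketches in a remark immediately after its main proof. The paper's primary argument avoids invoking dominated convergence and instead proceeds by hand: after computing the same function $R(\lambda)=\phi_\eta(\lambda)$, it fixes $\delta>0$ with $\mu[a-\delta,a+\delta]\le\epsilon/5$ and $\mu[b-\delta,b+\delta]\le\epsilon/5$, and then splits $\BR$ into five pieces, bounding the contribution on each of $(-\infty,a-\delta)$, $(b+\delta,\infty)$, and $(a+\delta,b-\delta)$ by explicit estimates on $\arctan$, and controlling the two $\delta$-neighborhoods of the endpoints using the hypothesis $\mu\{a\}=\mu\{b\}=0$. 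Your route replaces all of that bookkeeping with the single observation that $|\phi_\eta|\le 1$ uniformly, which is $\mu$-integrable since $\mu$ is finite, and then applies dominated convergence to the pointwise limit $\phi_\eta\to\one_{(a,b)}+\tfrac12\one_{\{a,b\}}$. This is cleaner and strictly shorter; indeed the paper's remark even overcomplicates the dominating-function step by mentioning tail decay $\sim\eta/\lambda^2$, whereas for a sub-probability measure the constant bound $1$ already suffices, as you correctly note. What the longer argument buys in principle is independence from the dominated convergence theorem (it is entirely elementary), and its structure adapts more transparently to settings where $\mu$ is not finite, but for the theorem as stated your proof is the more economical one.
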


\begin{proof}
First, we massage the expression:
\begin{multline*}
\int_a^b \frac1\pi \Im g_\mu(x+i \eta) dx = \int_a^b \frac1\pi \int_{-\infty}^\infty \frac{\eta}{(\lambda-x)^2 + \eta^2} \mu(d\lambda) dx =\\
\int_{-\infty}^\infty \frac1\pi \left( \tan^{-1}\left(\frac{b-\lambda}{\eta}\right) - \tan^{-1}\left(\frac{a-\lambda}{\eta}\right) \right) \mu(d\lambda).
\end{multline*}
(Note: $\tan^{-1}$ is the inverse tangent function.)

Let
\[
R(\lambda) = \frac{1}{\pi}\left[\tan^{-1}\left(\frac{b-\lambda}{\eta}\right) - \tan^{-1}\left(\frac{a-\lambda}{\eta}\right)\right];
\]
a plot of this function looks as follows:\\
\begin{center}
\begin{picture}(0,0)%
\includegraphics{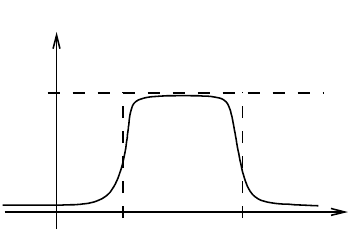}%
\end{picture}%
\setlength{\unitlength}{3947sp}%
\begingroup\makeatletter\ifx\SetFigFont\undefined%
\gdef\SetFigFont#1#2#3#4#5{%
  \reset@font\fontsize{#1}{#2pt}%
  \fontfamily{#3}\fontseries{#4}\fontshape{#5}%
  \selectfont}%
\fi\endgroup%
\begin{picture}(1687,1196)(1255,-1709)
\put(1403,-1030){\makebox(0,0)[lb]{\smash{{\SetFigFont{10}{12.0}{\rmdefault}{\mddefault}{\updefault}{\color[rgb]{0,0,0}1}%
}}}}
\put(2942,-1583){\makebox(0,0)[lb]{\smash{{\SetFigFont{10}{12.0}{\rmdefault}{\mddefault}{\updefault}{\color[rgb]{0,0,0}$\lambda$}%
}}}}
\put(1333,-633){\makebox(0,0)[lb]{\smash{{\SetFigFont{10}{12.0}{\rmdefault}{\mddefault}{\updefault}{\color[rgb]{0,0,0}$R(\lambda)$}%
}}}}
\put(1827,-1669){\makebox(0,0)[lb]{\smash{{\SetFigFont{10}{12.0}{\rmdefault}{\mddefault}{\updefault}{\color[rgb]{0,0,0}$a$}%
}}}}
\put(2395,-1669){\makebox(0,0)[lb]{\smash{{\SetFigFont{10}{12.0}{\rmdefault}{\mddefault}{\updefault}{\color[rgb]{0,0,0}$b$}%
}}}}
\end{picture}%

\end{center}

Some facts:
\begin{enumerate}
\item $0 \leq R(\lambda) \leq 1$
\item $\tan^{-1}(x) - \tan^{-1}(y) = \tan^{-1}\left(\frac{x-y}{1+xy}\right)$
and therefore
\[
R(\lambda) = \frac1\pi \tan^{-1} \left(\frac{\eta(b-a)}{\eta^2+(b-\lambda)(a-\lambda)}\right).
\]
\item $\tan^{-1}(x) \leq x$ for $x \geq 0$.
\end{enumerate}

Let $\delta$ be such that $\mu[a-\delta,a+\delta] \leq \epsilon/5$ and $\mu[b-\delta,b+\delta] \leq \epsilon/5$. Now,
\begin{multline*}
\abs{\int_\BR \one_{[a,b]}(\lambda) \mu(d\lambda) - \int_\BR R(\lambda) \mu(d\lambda)} \leq\\
\frac{2\epsilon}{5} + \int_{-\infty}^{a-\delta} R(\lambda) \mu(d\lambda) + \int_{b+\delta}^\infty R(\lambda) \mu(d\lambda) + \int_{a+\delta}^{b-\delta} (1-R(\lambda)) \mu(d\lambda).
\end{multline*}
For the first term, if $\lambda < a,b$ then the argument of the arctangent in $R(\lambda)$ is positive, so we apply the third fact to get
\[
\int_{-\infty}^{a-\delta} R(\lambda) \mu(d\lambda) \leq \frac1\pi \eta \int_{-\infty}^{a-\delta} \frac{b-a}{\eta^2 + (b-\lambda)(a-\lambda)} \mu(d\lambda).
\]
(Similarly for the second term, where $\lambda > a,b$.) It's not hard to check that the integral is finite, and bounded uniformly in all small $\eta$. Hence the first two terms are $\leq \frac\epsilon5$ for sufficiently small $\eta$.

Finally, for the third term, we have
\begin{multline*}
\int_{a+\delta}^{b-\delta}\left(1-\frac1\pi \tan^{-1}\left(\frac{b-\lambda}{\eta}\right) + \frac1\pi \tan^{-1}\left(\frac{a-\lambda}{\eta}\right)\right) \mu(d\lambda) \leq\\
\int_{a+\delta}^{b-\delta} \left(1-\frac2\pi \tan^{-1} \left(\frac\delta\eta\right) \right) \mu(d\lambda)
\end{multline*}
Now, as $\eta \to 0+$ we have $\tan^{-1}(\delta/\eta) \to \frac\pi2$ (from below), and in particular the integral will be $\leq \frac\epsilon5$ for all sufficiently small $\eta$.

Adding together the bounds gives $\leq \epsilon$ for all sufficiently small $\eta$, as required.
\end{proof}

A somewhat shorter version of the proof can be obtained by observing that as $\eta \rightarrow 0$, $R(\lambda)\rightarrow 1$ for $\lambda \in (a,b)$, $R(\lambda)\rightarrow 0$ for $\lambda \in [a,b]^c$, and $R(\lambda)\rightarrow 1/2$ for $\lambda \in \{a,b\}$. In addition, $R(\lambda)$ can be majorized uniformly in $\eta$ by a positive integrable function. This is because $R(\lambda)$ is uniformly bounded and its tails are $\sim \eta/\lambda^2$. Hence, we can apply the dominated convergence theorem (e.g., property (viii) on p. 44 in \cite{chung01}) and obtain 
\begin{equation}
\frac{1}{\pi}\int_{\BR}R(\lambda)\mu(d\lambda)\rightarrow \mu[a,b].
\end{equation} 

\begin{corollary}
If two subprobability measures $\mu$, $\nu$ have $g_\mu=g_\nu$, then $\mu=\nu$.
\end{corollary}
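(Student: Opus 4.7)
The plan is to reduce the statement directly to the inversion formula just proved. Since $g_\mu = g_\nu$ identically on $\BC \setminus \BR$, the inversion theorem gives
\[
\mu([a,b]) = \lim_{\eta \to 0^+} \int_a^b \frac{1}{\pi} \Im g_\mu(x+i\eta)\, dx = \lim_{\eta \to 0^+} \int_a^b \frac{1}{\pi} \Im g_\nu(x+i\eta)\, dx = \nu([a,b])
\]
for every closed interval $[a,b]$ with $a,b$ chosen so that $\mu\{a\} = \mu\{b\} = \nu\{a\} = \nu\{b\} = 0$.

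Next I would promote this agreement on a restricted class of intervals to agreement on all Borel sets. Because $\mu$ and $\nu$ are finite, each has at most countably many atoms, so the set $D$ of points that are atoms of either $\mu$ or $\nu$ is countable and its complement $\BR \setminus D$ is dense in $\BR$. The family $\mathcal{P} = \{[a,b] : a,b \in \BR \setminus D,\ a \leq b\}$ is a $\pi$-system, and by the preceding display $\mu$ and $\nu$ agree on $\mathcal{P}$.

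To conclude, I would invoke the $\pi$-$\lambda$ theorem: $\mathcal{P}$ generates the Borel $\sigma$-algebra on $\BR$ (since arbitrary open intervals can be written as countable unions of elements of $\mathcal{P}$, using density of $\BR \setminus D$), and the class $\mathcal{L} = \{A \in \mathcal{B}(\BR) : \mu(A) = \nu(A)\}$ is a $\lambda$-system (here finiteness of both measures, which follows from $\mu(\BR), \nu(\BR) \leq 1$, is what lets us take complements). Hence $\mathcal{L} \supseteq \sigma(\mathcal{P}) = \mathcal{B}(\BR)$, giving $\mu = \nu$.

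There is no real obstacle here; the only point requiring care is the bookkeeping of atoms, which is handled by the observation that atoms form a countable set and therefore can be avoided while still generating the Borel $\sigma$-algebra. The subprobability hypothesis enters only through the inversion theorem and through finiteness in the $\pi$-$\lambda$ step.
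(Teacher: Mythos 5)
Your approach is the natural one and is essentially what the paper leaves implicit: the corollary is stated without proof, as an immediate consequence of the inversion theorem, and the standard way to flesh that out is exactly what you did — establish agreement on intervals with non-atom endpoints, then upgrade to all Borel sets by a $\pi$-$\lambda$ argument. One small gap is worth flagging. For $\mathcal{L} = \{A \in \mathcal{B}(\BR) : \mu(A) = \nu(A)\}$ to be a $\lambda$-system you need, in addition to finiteness, that $\BR \in \mathcal{L}$, i.e.\ $\mu(\BR) = \nu(\BR)$. Finiteness is what lets you pass from $A \subseteq B$, both in $\mathcal{L}$, to $B \setminus A \in \mathcal{L}$ via subtraction, but it does not by itself give equality of the total masses, and subprobability measures may a priori have different total mass. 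This equality does follow from your interval agreement: choose $a_n \to -\infty$ and $b_n \to +\infty$ through the complement of the (countable) atom set, so that $\mu([a_n,b_n]) = \nu([a_n,b_n])$ for all $n$, and invoke continuity of measure from below to get $\mu(\BR) = \nu(\BR)$. With that line added, the proof is complete and matches the paper's intent.
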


\begin{theorem}
\label{theorem_convergence_Stieltjes_and_measure}
Let $\{\mu_n\}$ be a sequence of probability measures, with Stieltjes transforms $g_{\mu_n}(z)$. Suppose there is a probability measure $\mu$ with Stieltjes transform $g_\mu(z)$, such that $g_{\mu_n}(z) \to g_\mu(z)$ holds at every $z \in A$, where $A \subseteq \BC$ is a set with at least one accumulation point. Then $\mu_n \to \mu$ weakly.
\end{theorem}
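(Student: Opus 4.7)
The natural strategy is a subsequence/compactness argument combined with analytic continuation. First, by Helly's selection theorem every subsequence $\{\mu_{n_k}\}$ admits a further subsequence $\{\mu_{n_{k_l}}\}$ converging vaguely to some sub-probability measure $\nu$ on $\BR$. It suffices to show $\nu = \mu$ for every such vague limit, since the full sequence then converges weakly to $\mu$ by the standard subsequence principle.

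Second, for each fixed $z \in \BC \setminus \BR$ the function $x \mapsto 1/(x-z)$ is continuous on $\BR$ and vanishes at infinity, hence lies in $C_0(\BR)$. Since vague convergence is exactly convergence of integrals against $C_0(\BR)$ functions, we obtain $g_{\mu_{n_{k_l}}}(z) \to g_\nu(z)$ for every $z \in \BC \setminus \BR$. Combining this with the hypothesis that $g_{\mu_n}(z) \to g_\mu(z)$ on $A$ gives $g_\mu = g_\nu$ on $A$.

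Third, both $g_\mu$ and $g_\nu$ are holomorphic on the open upper half-plane and on the open lower half-plane. Since $A \subseteq \BC \setminus \BR$ has an accumulation point, infinitely many points of $A$ cluster to a point in one of the two open half-planes; the identity theorem then forces $g_\mu \equiv g_\nu$ on that whole half-plane, and the symmetry $g(\bar z) = \overline{g(z)}$ satisfied by every Stieltjes transform propagates the equality to the other half-plane as well. The Corollary to the inversion formula then yields $\nu = \mu$ as sub-probability measures. In particular $\nu(\BR) = \mu(\BR) = 1$, so no mass has escaped to infinity along $\{\mu_{n_{k_l}}\}$, and vague convergence upgrades to weak convergence $\mu_{n_{k_l}} \to \mu$.

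The main subtlety I expect is the final upgrade from vague to weak convergence: this is the step that uses $\mu$ being a probability measure (rather than merely sub-probability), because total mass is not preserved under vague limits in general and must be recovered from the identity $g_\nu = g_\mu$. A secondary point requiring mild care is ensuring the accumulation point of $A$ lands inside the domain of holomorphy, but this is essentially forced by the fact that $A$ is a subset of $\BC \setminus \BR$ on which Stieltjes transforms are defined.
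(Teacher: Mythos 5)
Your proof is correct and follows essentially the same route as the paper: vague compactness to extract a sub-probability limit $\nu$, identification of $g_\nu$ with $g_\mu$ via vague convergence of integrals of the $C_0$ functions $\Re\frac{1}{x-z}$ and $\Im\frac{1}{x-z}$, analytic continuation from $A$ to the whole half-plane, the inversion corollary, and then the upgrade from vague to weak convergence using $\nu(\BR)=1$. The one variation is cosmetic: you apply the identity theorem to the two limit functions $g_\mu$ and $g_\nu$, whereas the paper uses a Vitali--Montel-type argument on the sequence $g_{\mu_n}$ (implicitly relying on the uniform bound $|g_{\mu_n}(z)| \leq 1/\Im z$) to upgrade pointwise convergence on $A$ to convergence on all of $\BH$; both land in the same place. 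One caution, which affects the paper's proof as well: your remark that the accumulation point of $A$ must lie inside one of the open half-planes ``because $A \subseteq \BC\setminus\BR$'' is not actually forced --- a set like $A=\{i/n : n\geq 1\}$ lies in $\BH$ but accumulates only at $0\in\BR$, where neither the identity theorem nor Vitali applies. The theorem should really be read with the hypothesis that $A$ has an accumulation point in $\BC\setminus\BR$; under that reading your argument is complete.
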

It's reasonably clear that if $\mu_n$ converges weakly, then the limit must be $\mu$; the main issue is showing that it converges.
\begin{proof}
\begin{definition}
We say that a sequence of sub-probability measures $\mu_n$ converges to a sub-probability measure $\mu$ \emph{vaguely} if for all $f \in \CC_0(\BR)$ we have
\[
\int_\BR f \mu_n(dx) \to \int_\BR f \mu(dx)
\]
Here, the space $\CC_0$ consists of continuous functions $f$ with $f(x) \to 0$ as $\abs{x} \to \infty$.
\end{definition}
Recall that weak convergence is the same statement for \emph{all} continuous bounded functions $f$.

Fact: a sequence of sub-probability measures $\{\mu_n\}$ always has a limit point with respect to vague convergence (Theorem 4.3.3 on p. 88 in \cite{chung01}).

\begin{example}
The sequence of measures in Figure¬\ref{fig:vague} has no limit points with respect to weak convergence, but converges to 0 with respect to vague convergence.
\begin{figure}[htbp]
\input{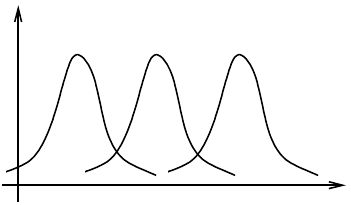_t}
\caption{$\{\mu_n\}$ has no limit points with respect to weak convergence, but $\mu_n \to 0$ vaguely.}
\label{fig:vague}
\end{figure}
\end{example}

Fact 2: if a sequence of probability measures $\mu_n$ converges to a probability measure $\mu$ vaguely, then $\mu_n \to \mu$ weakly (Theorem 4.4.2 on p. 93 in \cite{chung01}).

We can now finish the proof. We will show that all vague limit points of $\{\mu_n\}$ have the same Stieltjes transform, $g_\mu$. Since the Stieltjes transform is invertible, all vague limit points of $\{\mu_n\}$ are $\mu$, and hence $\mu_n \to \mu$ weakly.

Thus, it remains to show that if $\nu$ is a vague limit point of $\{\mu_n\}$, then $g_\nu(z) = g_\mu(z)$.

First, observe that since the Stieltjes transform is holomorphic on the upper half-plane $\BH = \{z \in \BC: \Im z > 0\}$, the convergence $g_{\mu_n}(z) \to g_\mu(z)$ on $A$ implies $g_{\mu_n}(z) \to g_\mu(z)$ on the entire upper half-plane $\BH$. (In particular, we might as well have started with convergence on $\BH$ as the theorem assumption.)

Now, since $\Im\left(\frac{1}{x-z}\right)$ and $\Re\left(\frac{1}{x-z}\right)$ belong to $\CC_0(\BR)$ (for each $z$ in the upper half-plane), we must have
\[
\int \Im\left(\frac{1}{x-z}\right) \mu_n(dx) \to \int \Im\left(\frac{1}{x-z}\right) \nu(dx)
\]
by definition of vague convergence, and similarly for the real part. We conclude $g_{\mu_n}(z) \to g_\nu(z)$ for every $z \in \BH$, which finishes the proof.
\end{proof}

\subsection{Application to the analysis of the Gaussian Wigner ensemble}
 Let $X_N$ be a Gaussian Wigner matrix, that is, a symmetric matrix with i.i.d entries Gaussian $X_{ij}$, such that $\BE X_{ij}=0$ and $\BE X_{ij}^2=1$. Our goal is to show that the eigenvalue distribution of $X_N$ weakly converges in probability to the Wigner semicircle law.
 First, let us collect some useful facts about operator resolvents.
\begin{definition}
The \emph{resolvent} of $X$ is $G_X(z) = (X-z)^{-1}$.
\end{definition}
The Stieltjes transform of $L_N$ is
\[
g_{L_N}(z) = \frac1N \sum_{i=1}^N \frac{1}{\lambda_i - z} = \frac1N \Tr G_{X_N}(z)
\]
and in particular,
\[
\BE g_{L_N}(z) = g_{\ov{L_N}}(z) = \frac1N \BE \Tr(G_{X_N}(z))
\]

\begin{lemma}[Resolvent identity]
\[
G_{X+A}(z) - G_X(z) = -G_{X+A}(z)AG_x(z)
\]
\end{lemma}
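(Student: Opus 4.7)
The plan is to prove this identity by pure algebraic manipulation, using only the defining property of the resolvent: $(Y-z)G_Y(z) = G_Y(z)(Y-z) = I$ whenever $z$ lies outside the spectrum of $Y$. No analysis is required, since the statement is a finite-dimensional matrix identity.

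My approach would be to sandwich the difference $G_{X+A}(z) - G_X(z)$ between factors that collapse to identities. Concretely, I would insert $I = (X-z) G_X(z)$ on the right of the first term and $I = G_{X+A}(z)(X+A-z)$ on the left of the second term, writing
\[
G_{X+A}(z) = G_{X+A}(z)(X-z) G_X(z), \qquad G_X(z) = G_{X+A}(z)(X+A-z) G_X(z).
\]
Subtracting these two expressions gives
\[
G_{X+A}(z) - G_X(z) = G_{X+A}(z)\bigl[(X-z) - (X+A-z)\bigr] G_X(z) = -G_{X+A}(z) A\, G_X(z),
\]
which is exactly the claim.

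There is essentially no obstacle here: the only subtlety is to note that the identity makes sense only when both $G_X(z)$ and $G_{X+A}(z)$ exist, i.e. when $z$ is neither in the spectrum of $X$ nor in the spectrum of $X+A$; for the applications in this section $z$ will be taken in $\mathbb{H}$, so both resolvents are automatically defined. A symmetric rearrangement (inserting the identities on the opposite sides) yields the companion form $G_{X+A}(z) - G_X(z) = -G_X(z) A\, G_{X+A}(z)$, which is often useful as well.
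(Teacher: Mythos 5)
Your proof is correct and is essentially the same as the paper's: the paper verifies the identity by multiplying on the left by $X+A-z$ and on the right by $X-z$, while you run the same computation in the forward direction by inserting $I=(X-z)G_X(z)$ and $I=G_{X+A}(z)(X+A-z)$, which is just the constructive phrasing of the identical manipulation.
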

\begin{proof}
Multiply by $X-z$ on the right, and by $X+A-z$ on the left.
\end{proof}
\begin{corollary}
Taking $A=-X$, we get
\[
-\frac1z - G_X(z) = \frac1z X G_X(z)
\]
so
\[
G_X(z) = -\frac1z + \frac1z X G_X(z)
\]
\end{corollary}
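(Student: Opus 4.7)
The plan is to specialize the resolvent identity just proved to the particular choice $A = -X$, for which $X + A = 0$ and the resolvent collapses to a scalar matrix. The first step is to compute $G_{X+A}(z) = G_0(z) = (0 - zI)^{-1} = -\frac{1}{z}I$, a scalar multiple of the identity which therefore commutes with every operator in sight. The second step is to substitute both $A = -X$ and this formula for $G_{X+A}(z)$ into the identity $G_{X+A}(z) - G_X(z) = -G_{X+A}(z)\, A\, G_X(z)$; after cancelling signs this gives the first displayed equation directly.

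The second displayed equation is then pure algebraic rearrangement: move $G_X(z)$ to the left-hand side and $-\frac{1}{z}I$ to the right-hand side of the first equation. The only point that needs attention is the bookkeeping of the three minus signs (one from $A = -X$, one from the right-hand side of the resolvent identity, and one from $G_0(z) = -\frac{1}{z}I$), but since every matrix involved commutes with the scalar $-\frac{1}{z}I$, no ordering subtlety arises. There is no genuine obstacle: the corollary is an immediate specialization of the lemma, recorded in a form that will serve as the starting point for deriving a self-consistent equation for $\BE g_{L_N}(z) = \frac{1}{N}\BE\Tr G_{X_N}(z)$ in the Gaussian Wigner analysis that follows.
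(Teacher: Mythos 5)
Your approach is exactly the paper's: specialize the resolvent identity to $A=-X$, compute $G_0(z)=(0-zI)^{-1}=-\tfrac1z I$, and substitute. That is the whole content of the corollary, and you correctly note there are no noncommutativity subtleties because $G_0(z)$ is scalar.

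However, you assert that "after cancelling signs this gives the first displayed equation directly," and that is precisely where you should have paused to actually track the three signs you mention. Substituting $G_{X+A}(z)=-\tfrac1z I$ and $A=-X$ into $G_{X+A}(z)-G_X(z)=-G_{X+A}(z)AG_X(z)$ yields
\[
-\tfrac1z - G_X(z) \;=\; -\bigl(-\tfrac1z\bigr)(-X)G_X(z) \;=\; -\tfrac1z X G_X(z),
\]
which differs by a sign from the first displayed equation in the corollary as printed. The printed first line therefore contains a sign typo; the corrected version above is the one from which the second line $G_X(z)=-\tfrac1z+\tfrac1z XG_X(z)$ actually follows by rearrangement, and that second line is what is used in the subsequent derivation of the self-consistent equation for $\BE g_{L_N}(z)$. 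Had you carried out the sign bookkeeping you promised rather than asserting its outcome, you would have caught this; as written, your two sentences are mutually inconsistent (rearranging the printed first line gives $G_X(z)=-\tfrac1z-\tfrac1z XG_X(z)$, not the stated second line).
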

Note: Here $1/z$ is a shortcut for $(1/z)I_N$, where $I_N$ is the $N\times N$ identity matrix.
\begin{corollary}\label{coroll 2}
\[
\frac{\del G_{uv}}{\del X_{ij}} = -G_{ui}G_{jv} - G_{uj} G_{iv}
\]
\end{corollary}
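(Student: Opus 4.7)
The proof is a direct computation from the resolvent identity of the preceding lemma. The crucial input is that $X$ is symmetric, so the scalar variable $X_{ij}$ (with $i\neq j$) appears in two matrix positions, and perturbing it by $t$ corresponds to adding $t$ to both the $(i,j)$ and $(j,i)$ entries of the matrix. Let $E_{ab}$ denote the matrix with a single $1$ in position $(a,b)$ and zeros elsewhere.

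The plan is: take $A = t(E_{ij}+E_{ji})$ and apply the resolvent identity to write
\[
\frac{G_{X+A}(z) - G_X(z)}{t} = -G_{X+A}(z)\,(E_{ij}+E_{ji})\,G_X(z).
\]
As $t\to 0$, continuity of matrix inversion yields $G_{X+A}(z)\to G_X(z)$ (entrywise), so the right-hand side converges to $-G_X(z)\,(E_{ij}+E_{ji})\,G_X(z)$. The left-hand side is by definition the matrix of derivatives $\del G/\del X_{ij}$, giving the identity
\[
\frac{\del G}{\del X_{ij}} \;=\; -G\,(E_{ij}+E_{ji})\,G.
\]

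Extracting the $(u,v)$ entry is then one line: using $(E_{ab})_{pq} = \delta_{pa}\delta_{qb}$ one computes $(GE_{ij}G)_{uv} = \sum_{p,q} G_{up}\,\delta_{pi}\delta_{qj}\,G_{qv} = G_{ui}G_{jv}$, and similarly $(GE_{ji}G)_{uv} = G_{uj}G_{iv}$. Summing gives the claimed formula $\del G_{uv}/\del X_{ij} = -G_{ui}G_{jv} - G_{uj}G_{iv}$.

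There is essentially no obstacle: the only subtlety is bookkeeping the symmetry of $X$, which is precisely what produces the two terms on the right-hand side (rather than the single term $-G_{ui}G_{jv}$ one would get for an unconstrained, non-symmetric entry). The diagonal case $i=j$ follows the same argument with $A = tE_{ii}$, where the formula as written is understood with the double-counting convention in which $X_{ij}$ and $X_{ji}$ are treated as the same scalar variable.
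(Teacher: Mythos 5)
Your proof is correct and is precisely the argument the paper has in mind: the paper explicitly leaves the choice of perturbation $A$ as an exercise, hinting at ``the $ij$th — and possibly $ji$th — entry,'' and your $A = t(E_{ij}+E_{ji})$ is the right choice for a symmetric matrix. Dividing the resolvent identity by $t$, letting $t\to 0$, and reading off the $(u,v)$ entry is exactly the intended one-step derivation.

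One small caveat on your last paragraph. For $i=j$, taking $A = tE_{ii}$ gives $\del G_{uv}/\del X_{ii} = -(GE_{ii}G)_{uv} = -G_{ui}G_{iv}$, a single term, whereas substituting $i=j$ into the corollary's displayed formula yields $-2G_{ui}G_{iv}$. So the formula as written is exact only for $i\neq j$ and is off by a factor of $2$ on the diagonal; the ``double-counting convention'' you invoke doesn't quite rescue it, since when $i=j$ there is nothing to double-count. This discrepancy is harmless for the downstream Wigner computation (the $N$ diagonal terms are negligible against the $N^2$ off-diagonal ones), and the paper itself glosses over it, but it is worth stating the formula for $i\neq j$ and treating the diagonal case separately rather than appealing to a convention.
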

(The question of what $A$ you need to get this is left as an exercise to the reader, but at a guess $A$ should have 0 everywhere except the $ij$th -- and possibly $ji$th -- entry, in which it should have an $\epsilon$.)

\begin{lemma}\label{lemma:E(x*f(x))}
If $\xi \sim \CN(0,\sigma^2)$ and $f$ is a differentiable function which grows no faster than a polynomial, then $\BE \xi f(\xi) = \sigma^2 \BE(f'(\xi))$.
\end{lemma}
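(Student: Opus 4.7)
The plan is to prove this as a direct integration by parts on the real line. Writing out the expectation against the Gaussian density $\varphi_\sigma(x) = (2\pi\sigma^2)^{-1/2} e^{-x^2/(2\sigma^2)}$, the key observation is the identity
\[
x\,\varphi_\sigma(x) \;=\; -\sigma^2 \varphi_\sigma'(x),
\]
which is immediate from differentiating the exponential. Substituting this into $\BE[\xi f(\xi)] = \int_{\BR} x f(x)\varphi_\sigma(x)\,dx$ converts the factor of $x$ into a derivative of the density.

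Next I would integrate by parts. This gives
\[
\BE[\xi f(\xi)] \;=\; -\sigma^2 \int_{\BR} f(x)\,\varphi_\sigma'(x)\,dx \;=\; -\sigma^2 \bigl[f(x)\varphi_\sigma(x)\bigr]_{-\infty}^{\infty} + \sigma^2 \int_{\BR} f'(x)\,\varphi_\sigma(x)\,dx.
\]
The boundary term vanishes because $f$ grows at most polynomially while $\varphi_\sigma$ decays faster than any polynomial, so $f(x)\varphi_\sigma(x) \to 0$ as $|x| \to \infty$. The remaining integral is exactly $\sigma^2\,\BE[f'(\xi)]$, which is the claimed identity.

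The only mild technical point is justifying the integration by parts: strictly speaking, one should argue that both $xf(x)\varphi_\sigma(x)$ and $f'(x)\varphi_\sigma(x)$ are absolutely integrable, so that all integrals in sight converge and the computation is legitimate. This follows from the polynomial growth assumption on $f$ (hence on $f'$, if one interprets ``grows no faster than a polynomial'' as applying to $f'$ as well; otherwise one can first approximate $f$ by smooth compactly supported functions and pass to the limit). No further obstacles arise, and the proof fits in a few lines.
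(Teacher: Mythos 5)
Your proof is correct and is the standard Gaussian integration by parts argument (also known as Stein's lemma). The paper in fact states this lemma without proof and proceeds directly to apply it, so there is no in-text argument to compare against; your derivation via the identity $x\varphi_\sigma(x) = -\sigma^2\varphi_\sigma'(x)$ followed by integration by parts is exactly what is intended. Your closing remark about the integrability of $f'\varphi_\sigma$ is a genuine and worthwhile caveat: ``$f$ grows at most polynomially'' does not by itself control $f'$ (consider $f(x)=\sin(e^x)$), so one should either read the hypothesis as applying to both $f$ and $f'$ (which is the case in the paper's application, where $f$ is an entry of a resolvent) or use the approximation argument you sketch.
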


We now continue the analysis of the Wigner ensemble:
\begin{align*}
\frac1N \BE \Tr(G_{X_N}(z)) &= -\frac1z + \frac{1}{Nz}\BE \Tr(X G_X)\\
&= -\frac1z +\frac{1}{Nz} \sum_{i,j} \BE(X_{ij} G_{ji})\\
&= -\frac1z + \frac{1}{N^2 z} \sum_{i,j} \BE \left(\frac{\del G_{ji}}{\del X_{ij}} \right)\\
&= -\frac1z + \frac{1}{N^2 z} \sum_{i,j} \BE(-G_{ji}G_{ij}-G_{jj}G_{ii})\\
&= -\frac1z - \frac{1}{N^2 z} \BE(\Tr(G^2)) - \frac1z \BE((\frac1N \Tr G)^2).
\end{align*}
Here, the third line follows from Lemma~\ref{lemma:E(x*f(x))}, and the fourth line follows from Corollary~\ref{coroll 2}.

Note that $\frac{1}{N^2 z} \Tr(G^2) = \frac{1}{Nz} \frac1N \sum_{i=1}^N \frac{1}{(\lambda_i - z)^2}$, and every term in the sum is bounded by $\frac{1}{\eta^2}$, where $\eta = \Im z$. In particular, as $N \to \infty$, this term $\to 0$.

We conclude
\[
\BE g(z) = -\frac1z - \frac1z \BE(g(z)^2) + E_N,
\]
where the error term $E_N$ satisfies $E_N \to 0$ as $N \to \infty$.

Now, if we had $\Var g(z) \to 0$ as $N \to \infty$, we could write
\[
\BE g(z) = -\frac1z - \frac1z \BE(g(z))^2 + \tilde E_N,
\]
for some other error term $\tilde E_N \to 0$ as $N \to \infty$. After some technical work, it can be shown that the solution of this equation converges to the solution of
\[
s(z) = -\frac1z - \frac1z s(z)^2
\]
for every $z \in \BH$, as $N \to \infty$. Since this is the Stieltjes transform of the semi-circle law, by using Theorem \ref{theorem_convergence_Stieltjes_and_measure} we conclude that $\BE L_N$ converges weakly to the semicircle law.

In addition, if $\mathrm{Var}g(z)\to 0$ as $N \to \infty$, then $g(z)-\BE g(z) \to 0$ in probability, which allows us to conclude that $L_N-\ov{L}_N \to 0$ weakly in probability.  

 Hence, all that remains to show is that
\[
\Var(g(z))=\Var\left(\frac1N\sum_{i=1}^N \frac{1}{\lambda_i - z} \right) \to 0.
\]
We have here a sum of many random terms. The problem is that the $\lambda_i$ are not at all independent. One way to handle this problem is to use concentration inequalities.

\subsection{Concentration inequalities; LSI}

\begin{definition}
Let $\mu$ be a measure on $\BR^m$. Define $W^{1,2}(\mu)$ as the space of differentiable functions $f$ such that $f \in L^2(\mu)$ and $\norm{\grad f}_2 \in L^2(\mu)$.
\end{definition}

\begin{definition}
A probability measure $\mu$ on $\BR^m$ is called \emph{LSI with constant $c$} if for every $f \in W^{1,2}(\mu)$ we have
\[
\int_{\BR^m} f^2 \log\left(\frac{f^2}{\int f^2 d\mu}\right) d\mu \leq 2c \int_{\BR^m} \norm{\grad f}_2^2 d\mu.
\]
\end{definition}
The name ``LSI'' here stands for ``logarithmic Sobolev inequality''.

We compare this with the Poincar\'e inequality:
\[
\Var f \leq \tilde c \int \norm{\grad f}_2^2 d\mu.
\]

We have the following facts:
\begin{enumerate}
\item Gaussian measure on $\BR$ is LSI.
\item\label{fact 4} If the law of random variable $X$ is LSI with constant $c$, then the law of $\alpha X$ is LSI with constant $\alpha^2 c$.
\item\label{fact 5} If $\mu$ is LSI with constant $c$, then the product measure $\mu \otimes \mu \otimes \dotsc \otimes \mu$ is LSI with the same constant $c$.
\item There is a Bobkov-G\"{o}tze criterion for distributions with a density to satisfy LSI with some finite constant $c$ (see \cite{bobkov_gotze99}). One useful case is when the density has the form $const\times \exp(-g(x))$, with twice differentiable $g(x)$ and $c_1\leq g''(x) \leq c_2$ for some positive constants $c_1$, $c_2$. 
\item A discrete measure is not LSI under this definition. However, there is also a discrete version of LSI, which is satisfied e.g. by the Bernoulli distribution. See example sheet 2.
\item There is a large literature about logarithmic Sobolev inequalities which are useful in the study of Markov processes. See review by Gross (\cite{gross93}) or lecture notes by Guionnet and Zegarlinski (http://mathaa.epfl.ch/prst/mourrat/ihpin.pdf) The topic is important and recently found applications in random matrix theory through the study of eigenvalues of matrices whose entries follow the Ornstein-Uhlebeck process. However, we are not concerned with these issues in our lectures. 
\end{enumerate}

For us, $X_N$ has a law which is a product of Gaussian measures with variance $1/N$, and hence by Facts~\ref{fact 4} and \ref{fact 5}, this law is LSI with constant $c/N$, where $c$ is the LSI constant of the standard Gaussian measure. The main idea is that the probability of a large deviation of a Lipschitz function $F(x_1,\ldots,x_n)$ from its mean can be efficiently estimated if the probability distribution of $x_1,\ldots,x_n$ is LSI.

\begin{definition}
A function $f: \BR^m \to \BR$ is \textit{Lipschitz} with constant $L$ if 
\begin{equation}
\sup_{x \neq y} \frac{\abs{f(x) - f(y)}}{\norm{x-y}} \leq L.
\end{equation} 
\end{definition} 

\begin{lemma}[Herbst]\label{herbst lemma}
Suppose $\mu$ is a measure on $\BR^m$ which is LSI with constant $c$, and $F: \BR^m \to \BR$ is Lipschitz with constant $L$. Then
\begin{enumerate}
\item $\BE[e^{\lambda (F - \BE F)}] \leq e^{c\lambda^2 L^2/2}$, for all $\lambda$
\item $\BP(\abs{F - \BE F} > \delta) \leq 2\exp(-\frac{\delta^2}{2cL^2})$
\item $\mathrm{Var} F \leq 4cL^2$.
\end{enumerate}
\end{lemma}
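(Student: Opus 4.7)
The three claims form a standard cascade: (1) is the real content and gives a sub-Gaussian bound on the moment generating function; (2) follows from (1) by a Chernoff argument; (3) follows from (2) by integrating the tail. So the whole proof hinges on (1).

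For (1), the trick is to feed the LSI a cleverly chosen test function and turn it into a differential inequality for $H(\lambda) := \BE[e^{\lambda F}]$. The natural choice is $f = e^{\lambda F/2}$, so that $f^2 = e^{\lambda F}$ and $\|\grad f\|_2^2 = (\lambda^2/4) e^{\lambda F} \|\grad F\|_2^2 \leq (\lambda^2 L^2/4) e^{\lambda F}$ using the Lipschitz bound on $\|\grad F\|$. Substituting into the LSI, the left-hand side becomes $\lambda H'(\lambda) - H(\lambda) \log H(\lambda)$ and the right-hand side is at most $(c\lambda^2 L^2/2) H(\lambda)$. Dividing by $\lambda^2 H(\lambda)$ reveals that
\[
\frac{d}{d\lambda}\left(\frac{\log H(\lambda)}{\lambda}\right) \leq \frac{cL^2}{2}.
\]
Since $\log H(\lambda)/\lambda \to \BE F$ as $\lambda \to 0$ (by L'Hôpital), integrating from $0$ to $\lambda$ yields $\log H(\lambda) \leq \lambda \BE F + c\lambda^2 L^2/2$ for $\lambda > 0$, which is (1); the case $\lambda < 0$ is symmetric.

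From (1), Markov's inequality gives $\BP(F - \BE F > \delta) \leq e^{-\lambda\delta + c\lambda^2 L^2/2}$ for every $\lambda > 0$; optimizing at $\lambda = \delta/(cL^2)$ produces $e^{-\delta^2/(2cL^2)}$, and applying the same to $-F$ and unioning yields (2). For (3), use $\Var F \leq \BE[(F - \BE F)^2] = \int_0^\infty 2t\, \BP(|F - \BE F| > t)\, dt$ and plug in (2); the Gaussian integral evaluates to $4cL^2$.

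\textbf{Main obstacle.} The calculation above treats $F$ as differentiable, but a Lipschitz $F$ is only differentiable almost everywhere (by Rademacher's theorem), so $f = e^{\lambda F/2}$ need not belong to $W^{1,2}(\mu)$ in a classical sense, and we must also verify that $H(\lambda)$ is finite and smooth enough to justify the ODE manipulations. The standard fix is to regularize: replace $F$ by a bounded smooth approximation $F_\epsilon$ (e.g.\ $F_\epsilon = (F \wedge M) \vee (-M)$ convolved with a Gaussian mollifier of width $\epsilon$), which is globally Lipschitz with constant $\leq L$, apply the argument above to $F_\epsilon$, and pass to the limit $\epsilon \to 0$, $M \to \infty$ using dominated convergence (the Lipschitz bound on $F_\epsilon$ guarantees exponential integrability uniformly). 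This technical envelope is where the real care is needed; the rest is essentially the differential-inequality computation sketched above.
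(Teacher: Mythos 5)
Your proof of part (1) is exactly the Herbst argument the paper uses: apply the LSI to $f = e^{\lambda F/2}$ (the paper uses the centered variant $f = e^{\lambda(F - \BE F)}$, which is the same up to rescaling $\lambda$), recognize the left side as $\lambda H'(\lambda) - H(\lambda)\log H(\lambda)$, bound the right side by $cL^2\lambda^2 H(\lambda)/2$ via the Lipschitz gradient bound, divide to get $(\log H(\lambda)/\lambda)' \leq cL^2/2$, and integrate using $\log H(\lambda)/\lambda \to \BE F$ at $0$. Your derivations of (2) by Chernoff optimization and (3) by tail integration are standard and match what the paper defers to the exercise sheet; the added remarks on Rademacher differentiability and mollification fill in a technical point the paper silently passes over.
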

We will prove this lemma a bit later. In order to apply this lemma we need to find out what is the Lipschitz constant of the Stieltjes transform.

\begin{lemma}[Hoffman-Wielandt inequality]\label{hw ineq}
If $A$ and $B$ are symmetric $N \times N$ matrices with eigenvalues
\[
\lambda^A_1 \leq \lambda^A_2 \leq \dotsc \leq \lambda^A_N, \quad \lambda^B_1 \leq \lambda^B_2 \leq \dotsc \leq \lambda^B_N
\]
then
\[
\sum_{i=1}^N (\lambda^A_i - \lambda^B_i)^2 \leq \mathrm{Tr}(A-B)^2 \leq 2\sum_{1 \leq i \leq j \leq N} (A_{ij} - B_{ij})^2.
\]
\end{lemma}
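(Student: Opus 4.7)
The right-hand inequality is essentially bookkeeping. Since $A-B$ is symmetric,
\[
\Tr(A-B)^2 = \sum_{i,j}(A_{ij}-B_{ij})^2 = \sum_i (A_{ii}-B_{ii})^2 + 2\sum_{i<j}(A_{ij}-B_{ij})^2,
\]
and bounding the diagonal contribution by its double gives $\Tr(A-B)^2 \leq 2\sum_{i\leq j}(A_{ij}-B_{ij})^2$. So the real content is the left-hand inequality, and that is where I would spend the work.

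The plan for the eigenvalue inequality is to reduce everything to a linear optimization over doubly stochastic matrices. Diagonalize $A = U D_A U^T$ and $B = V D_B V^T$ with $D_A = \mathrm{diag}(\lambda^A_1,\dotsc,\lambda^A_N)$ and $D_B = \mathrm{diag}(\lambda^B_1,\dotsc,\lambda^B_N)$ in increasing order. Expanding $\Tr(A-B)^2 = \sum_i (\lambda^A_i)^2 + \sum_i (\lambda^B_i)^2 - 2\Tr(AB)$ reduces the claim to proving the scalar inequality
\[
\Tr(AB) \leq \sum_{i=1}^N \lambda^A_i \lambda^B_i.
\]
Writing $O = U^T V$, which is orthogonal, and letting $p_{ij} := O_{ij}^2$, a direct computation gives
\[
\Tr(AB) = \Tr(D_A\, O\, D_B\, O^T) = \sum_{i,j} p_{ij}\, \lambda^A_i\, \lambda^B_j.
\]
Because $O$ is orthogonal, the matrix $P = (p_{ij})$ has nonnegative entries with every row and column summing to $1$, i.e.\ $P$ is doubly stochastic.

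The proof then concludes by a two-step combinatorial argument. First, Birkhoff's theorem identifies the set of doubly stochastic matrices with the convex hull of permutation matrices; since the functional $P \mapsto \sum_{i,j} p_{ij}\lambda^A_i \lambda^B_j$ is linear, its maximum over this convex set is attained at a vertex, so
\[
\Tr(AB) \leq \max_{\pi \in S_N} \sum_{i=1}^N \lambda^A_i \lambda^B_{\pi(i)}.
\]
Second, the rearrangement inequality says that when both sequences $(\lambda^A_i)$ and $(\lambda^B_i)$ are sorted in the same (increasing) order, the sum $\sum_i \lambda^A_i \lambda^B_{\pi(i)}$ is maximized by $\pi = \mathrm{id}$. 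This yields $\Tr(AB) \leq \sum_i \lambda^A_i \lambda^B_i$ and completes the proof.

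The main obstacle is the reduction to a doubly stochastic matrix and recognizing that the squared entries of an orthogonal matrix form one; once that observation is in place the remaining inputs (Birkhoff's theorem and the rearrangement inequality) are classical. A minor subtlety is that repeated eigenvalues make the diagonalizing orthogonal matrices non-unique, but this does not affect the argument because the bound depends only on $D_A$, $D_B$, and the doubly stochastic matrix $P$.
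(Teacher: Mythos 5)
Your proof is correct, and it follows the same Birkhoff/rearrangement argument as the reference the paper cites in lieu of a proof (Lemma 2.1.19 in Anderson--Guionnet--Zeitouni): diagonalize, reduce $\Tr(AB)$ to a linear functional over the doubly stochastic matrix $(|W_{ij}|^2)$, extremize at a permutation matrix, and finish with the rearrangement inequality. The paper itself gives no proof, so there is nothing further to compare.
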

The proof can be found in \cite{anderson_guionnet_zeitouni10}.

\begin{corollary}
The eigenvalues of a matrix $X$ are Lipschitz functions of the matrix entries $X_{ij}$, with constant $\sqrt{2}$.
\end{corollary}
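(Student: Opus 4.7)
The plan is to deduce the corollary as a one-line consequence of the Hoffman-Wielandt inequality (Lemma \ref{hw ineq}). Regard the symmetric matrix $X$ as parametrized by the vector of its independent entries $\{X_{ij}\}_{i\leq j}\in\BR^{N(N+1)/2}$, equipped with the standard Euclidean norm $\|X\|_2^2=\sum_{i\leq j}X_{ij}^2$. With this convention, the $k$th ordered eigenvalue $\lambda_k$ becomes a function $\BR^{N(N+1)/2}\to\BR$, and ``Lipschitz with constant $\sqrt{2}$'' means $|\lambda_k^A-\lambda_k^B|\leq\sqrt{2}\,\|A-B\|_2$ for all symmetric $A,B$.

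First I would fix two symmetric matrices $A$ and $B$ with ordered eigenvalues $\lambda_1^A\leq\cdots\leq\lambda_N^A$ and $\lambda_1^B\leq\cdots\leq\lambda_N^B$, and an index $k$. The trivial observation $(\lambda_k^A-\lambda_k^B)^2\leq\sum_{i=1}^N(\lambda_i^A-\lambda_i^B)^2$ reduces the problem to bounding the full sum. This is precisely what Hoffman-Wielandt provides:
\[
\sum_{i=1}^N(\lambda_i^A-\lambda_i^B)^2\leq 2\sum_{i\leq j}(A_{ij}-B_{ij})^2=2\|A-B\|_2^2.
\]
Taking square roots yields $|\lambda_k^A-\lambda_k^B|\leq\sqrt{2}\,\|A-B\|_2$, which is the claimed Lipschitz bound.

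There is no real obstacle here; the only thing to be careful about is the normalization convention for the entry-vector norm, since one could equally well parametrize by all $N^2$ entries with the Frobenius norm, in which case the Lipschitz constant would come out as $1$ (because $\Tr(A-B)^2=\|A-B\|_F^2$ already contains a factor of $2$ on the off-diagonal). The factor of $\sqrt{2}$ in the statement corresponds to the convention in which only the $N(N+1)/2$ independent entries $\{X_{ij}\}_{i\leq j}$ are used as coordinates, which is the convention relevant for the application of Herbst's lemma to the Wigner ensemble in the following subsection.
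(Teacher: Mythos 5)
Your proof is correct and is exactly the derivation the paper intends: the corollary is stated immediately after the Hoffman--Wielandt inequality with no written proof, so the expected argument is precisely to drop all but one term in the sum $\sum_i(\lambda_i^A-\lambda_i^B)^2$ and apply the inequality. Your remark about the normalization (constant $\sqrt{2}$ for the $N(N+1)/2$ independent-entry parametrization versus constant $1$ for the full Frobenius norm) is a helpful clarification, and it is the right convention for the subsequent application of Herbst's lemma to the law of $\{X_{ij}\}_{i\leq j}$.
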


\begin{corollary}
$g(X) = \frac1N \sum_{i=1}^N \frac{1}{\lambda_i - z}$ is Lipschitz (as a function of $X$!) with constant $c/\sqrt{N}$, where $c>0$ depends only on $\Im z$.
\end{corollary}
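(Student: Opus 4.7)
The plan is to combine the Hoffman-Wielandt bound (Lemma \ref{hw ineq}) with the fact that the scalar function $\phi(\lambda) = 1/(\lambda-z)$ has bounded derivative on $\BR$, so the Stieltjes transform inherits Lipschitz continuity from the eigenvalue map.

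First, I would write $g(X) = \tfrac{1}{N}\sum_{i=1}^N \phi(\lambda_i(X))$ and note that
\[
|\phi'(\lambda)| = \frac{1}{|\lambda-z|^2} \leq \frac{1}{\eta^2}, \qquad \eta := \Im z,
\]
so $|\phi(\lambda) - \phi(\lambda')| \leq \eta^{-2}|\lambda - \lambda'|$ for all real $\lambda, \lambda'$. (Since $\phi$ is complex-valued, I apply this separately to real and imaginary parts, which is all the subsequent application of Herbst's lemma needs.)

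Next, for two symmetric matrices $X, Y$ with ordered eigenvalues $\lambda_i^X, \lambda_i^Y$, I would estimate
\[
|g(X) - g(Y)| \leq \frac{1}{N\eta^2} \sum_{i=1}^N |\lambda_i^X - \lambda_i^Y| \leq \frac{1}{\sqrt{N}\,\eta^2} \Bigl(\sum_{i=1}^N (\lambda_i^X - \lambda_i^Y)^2\Bigr)^{1/2}
\]
by the Cauchy--Schwarz inequality, and then apply Lemma \ref{hw ineq} to get
\[
\Bigl(\sum_{i=1}^N (\lambda_i^X - \lambda_i^Y)^2\Bigr)^{1/2} \leq \sqrt{2}\,\Bigl(\sum_{i \leq j}(X_{ij}-Y_{ij})^2\Bigr)^{1/2} = \sqrt{2}\,\|X-Y\|,
\]
where $\|\cdot\|$ is the Euclidean norm on the $N(N+1)/2$ independent entries of a symmetric matrix (the ambient space to which LSI and Herbst's lemma are applied). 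Combining gives
\[
|g(X) - g(Y)| \leq \frac{\sqrt{2}}{\eta^2 \sqrt{N}}\,\|X-Y\|,
\]
which is the claimed bound with $c = \sqrt{2}/\eta^2$ depending only on $\Im z$.

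There is no real obstacle here: the only point requiring a small comment is that $g$ is complex valued while Herbst's lemma applies to real functions, which is handled by treating $\Re g$ and $\Im g$ separately (each satisfies the same Lipschitz bound since $|\Re(a-b)|, |\Im(a-b)| \leq |a-b|$). Everything else is a mechanical concatenation of Cauchy--Schwarz with Hoffman--Wielandt.
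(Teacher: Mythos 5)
Your proof is correct and is precisely the argument the paper is setting up: the corollary immediately preceding this one records that the eigenvalue vector is Lipschitz with constant $\sqrt{2}$ (a direct consequence of Hoffman--Wielandt), and you compose this with the pointwise bound $|\phi'(\lambda)| \le \eta^{-2}$ for $\phi(\lambda) = 1/(\lambda - z)$ together with Cauchy--Schwarz to pick up the crucial $1/\sqrt{N}$. The paper leaves the details to an exercise, but your chain Cauchy--Schwarz $\to$ Hoffman--Wielandt is exactly the intended route, and your remark about splitting into real and imaginary parts before invoking Herbst is a legitimate and necessary observation.
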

(See problem 7 in example sheet 2.)

We now use these results to finish the proof of the Wigner law for Gaussian Wigner matrices: recall we still needed to show that $\mathrm{Var} g_N(z) \to 0$. We know that the joint measure of the entries of $X_N$ is LSI with constant $c/N$, and $g_N(z)$ is Lipschitz (in $X$) with constant $L = L(\eta)/sqrt(N)$, where $\eta = \Im z$. By applying the Herbst lemma, $\mathrm{Var} g_N(z) \leq \frac{2cL^2(\eta)}{N^2} \to 0$ for all $z \in \BH$ as $N \to \infty$.

\begin{remark}
Note that this means that $\Var \sum_{i=1}^N \frac{1}{\lambda_i - z} \to C$ as $N \to \infty$. (This is surprising, since usually we would normalize a sum of $N$ random terms by $1/\sqrt{N}$. No normalization is needed at all in this case!)
\end{remark}

\begin{proof}[Proof of Herbst Lemma~\ref{herbst lemma}]
Let $A(\lambda) = \log \BE \exp(2\lambda(F - \BE F))$; we want to show $A(\lambda) \leq 2c\lambda^2 L$.

Applying LSI to $f = \exp(\lambda(F - \BE F))$, we have
\[
\int e^{2\lambda(F - \BE F)} \log \frac{e^{2\lambda(F-\BE F)}}{e^{A(\lambda)}} d\mu \leq 2c \int \lambda^2 e^{2\lambda(F-\BE F)}\norm{\grad F}_2^2 d\mu
\]

On the left-hand side, we have
\[
\int 2\lambda(F-\BE F) e^{2\lambda(F - \BE F)} d\mu - A(\lambda) \int e^{2\lambda(F - \BE F)} d\mu
\]
Note that the first term is $\lambda \frac{\del}{\del \lambda}(e^{2\lambda(F-\BE F)})$; interchanging integration and differentiation, we conclude that the left-hand side is
\[
\lambda (e^{A(\lambda)})' - A(\lambda) e^{A(\lambda)} = e^{A(\lambda)}\left(\lambda A'(\lambda) - A(\lambda)\right) = e^{A(\lambda)}\lambda^2\left(\frac{A(\lambda)}{\lambda}\right)'
\]
(where $'$ designates differentiation with respect to $\lambda$).

The right-hand side is bounded by $2c \lambda^2 e^{A(\lambda)} L^2$, since $\norm{\grad f}_2^2 \leq L^2$.

Consequently,
\[
\left(\frac{A(\lambda)}{\lambda}\right)' \leq 2cL^2.
\]
It's not hard to check by Taylor-expanding that $A(\lambda) = O(\lambda^2)$, so $A(\lambda)/\lambda \to 0$ as $\lambda \to 0$. Consequently,
\[
A(\lambda) \leq 2cL^2 \lambda^2
\]
as required. Two other statements easily follow from the first one (see exercise sheet 2). 
\end{proof}

\subsection{Wigner's theorem for non-Gaussian Wigner matrices by the Stieltjes transform method}

Recall the definition of the Wigner matrix. Matrix $X_N$ be an $N \times N$ symmetric real-valued matrix with matrix entries $X_{ij}$. We assume that $Z_{ij} = \sqrt{N} X_{ij}$ are iid real-valued random variables (for $i \leq j$) which are taken from a probability distribution, the same one for every $N$.

 We are going to prove the following result by the Stieltjes transform method. 

\begin{theorem}[Wigner]
Let $X_N$ be a sequence of Wigner matrices, such that $\BE Z_{ij} = 0$, $\BE Z_{ij}^2 = 1$ and the measure of $Z_{ij}$ is LSI, and let $L_N$ be its empirical measure of eigenvalues. Then, for any bounded continuous function $f \in \CC_b(\BR)$,
\[
\BP(\abs{\langle L_N, f \rangle - \langle \sigma, f \rangle} > \epsilon) \to 0 ~~ \text{as $N \to \infty$}
\]
\end{theorem}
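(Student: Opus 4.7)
The strategy is to follow the Stieltjes-transform analysis of the previous subsection verbatim, replacing the one place where Gaussianity was genuinely used — the integration-by-parts identity of Lemma~\ref{lemma:E(x*f(x))} — by a Schur-complement resolvent expansion that requires only independence of the matrix entries and a second moment. By Theorem~\ref{theorem_convergence_Stieltjes_and_measure}, it suffices to show that for each fixed $z\in\BH$ the Stieltjes transform $g_N(z):=g_{L_N}(z)$ converges in probability to $s(z)$, where $s(z)$ is the Stieltjes transform of $\sigma$, characterized on $\BH$ by $s(z)=-1/(z+s(z))$ together with $\Im s(z)>0$.

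The concentration step transfers with no change from the Gaussian case. By Facts~\ref{fact 4} and \ref{fact 5} the joint law of the entries of $X_N$ is LSI with constant $c/N$; by the Hoffman-Wielandt inequality (Lemma~\ref{hw ineq}) the map $X\mapsto g_N(z)$ is Lipschitz with constant $L(\Im z)/\sqrt{N}$. Applying the Herbst Lemma~\ref{herbst lemma} yields $\mathrm{Var}\,g_N(z)\le C(\Im z)/N^{2}\to 0$, so once I show $\BE g_N(z)\to s(z)$ the convergence in probability is automatic.

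For the identification of the limit of $\BE g_N(z)$, write $G(z)=(X_N-z)^{-1}$ and use block inversion around the $i$th row and column,
\[
G_{ii}(z)=\frac{1}{X_{ii}-z-\mathbf{x}_i^{T}(X^{(i)}-z)^{-1}\mathbf{x}_i},
\]
where $X^{(i)}$ is $X_N$ with the $i$th row and column removed and $\mathbf{x}_i\in\BR^{N-1}$ is the remaining part of the $i$th column. The vector $\mathbf{x}_i$ is independent of $X^{(i)}$ and has i.i.d.\ entries with mean $0$ and variance $1/N$, so
\[
\BE\bigl[\mathbf{x}_i^{T}(X^{(i)}-z)^{-1}\mathbf{x}_i\,\bigm|\,X^{(i)}\bigr]=\tfrac{1}{N}\Tr(X^{(i)}-z)^{-1},
\]
and Cauchy interlacing together with the crude bound $\norm{(X^{(i)}-z)^{-1}}\le 1/\Im z$ gives $\bigl|\tfrac{1}{N}\Tr(X^{(i)}-z)^{-1}-g_N(z)\bigr|=O(1/(N\Im z))$. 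A standard second-moment computation for quadratic forms in independent variables with bounded fourth moment (supplied by LSI via Herbst's subgaussian tails) shows that $\mathbf{x}_i^{T}(X^{(i)}-z)^{-1}\mathbf{x}_i$ concentrates around this conditional mean with variance $O(1/N)$. Combining with $|X_{ii}|=O(1/\sqrt{N})$ and averaging over $i$, I obtain the approximate self-consistent equation
\[
\BE g_N(z)=\frac{-1}{z+\BE g_N(z)}+o(1),
\]
and uniqueness of the root of $s=-1/(z+s)$ with $\Im s>0$ forces $\BE g_N(z)\to s(z)$.

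The main obstacle is the quadratic-form concentration $\mathbf{x}_i^{T}A\mathbf{x}_i\approx\tfrac{1}{N}\Tr A$, which is the exact point where the argument leaves the Gaussian world. It is for this step that the LSI hypothesis is needed beyond its role in the variance bound on $g_N$: via Herbst it provides subgaussian (in particular fourth-moment) tails for the $Z_{ij}$, which combined with the uniform operator-norm bound on $A=(X^{(i)}-z)^{-1}$ is exactly what the variance computation requires. Once the approximate fixed-point equation is established, inverting it to deduce $\BE g_N(z)\to s(z)$ is routine, the concentration step upgrades this to convergence in probability, and Theorem~\ref{theorem_convergence_Stieltjes_and_measure} delivers weak convergence of $L_N$ to $\sigma$ in probability.
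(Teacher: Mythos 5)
Your proposal is correct and follows essentially the same route as the paper: Stieltjes transform, LSI + Hoffman--Wielandt + Herbst for $\mathrm{Var}\,g_N(z)\to 0$, Schur-complement (block determinant) formula for the diagonal resolvent entries, interlacing for the trace comparison, and concentration of the quadratic form $x_i^T(X^{(i)}-z)^{-1}x_i$ around $\tfrac1N\Tr(X^{(i)}-z)^{-1}$ to close the self-consistent equation. The one place you genuinely deviate is the quadratic-form concentration: the paper first truncates the entries (via a Hoffman--Wielandt stability lemma), notes LSI is preserved under truncation, and then applies Herbst to get exponential concentration, whereas you propose a direct second-moment (Chebyshev) bound using the finite fourth moment that LSI supplies. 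Both work — an $L^2$ error of order $N^{-1/2}$ in the denominator is enough to force $\BE g_N(z)\to s(z)$ — and the paper itself notes in a remark after Claim~\ref{claim 1} that the LSI/Herbst step can be replaced by such quadratic-form estimates. Similarly, you fold the removal of the diagonal entries into an inline $|X_{ii}|=O(N^{-1/2})$ estimate rather than invoking the paper's separate lemma setting $X_{ii}=0$, which is an equivalent bookkeeping choice.
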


\begin{remark} The assumption that the measure of $Z_{ij}$ is LSI can be omitted and the Theorem still remains valid. The proof of this is somewhat more involved. See Remark at the end of the proof and example sheet 2.
\end{remark}

Proof: The general idea of the proof is the same as for Gaussian Wigner matrices. Two things that need to be proved is that
\[
\BE g_N(z) = -\frac1z - \frac1z \BE(g_N(z))^2 + \tilde E_N,
\]
for some error term $\tilde E_N \to 0$ as $N \to \infty$, and that $\mathrm{Var} g_N(z) \to 0.$ The second claim follows immediately from the assumption that the measure of $Z_{ij}$ is LSI. (Our proof of this result in the previous section does not depend on the assumption that the entries are Gaussian.) 

The proof of the first claim uses some matrix identities.

\begin{lemma}
Let $X$ be a square block matrix $\begin{pmatrix} A & B\\C & D \end{pmatrix}$, where $A$ is square and invertible. Then
\[
\det X = \det A \det(D - CA^{-1} B).
\]
\end{lemma}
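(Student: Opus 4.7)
The plan is to prove this via a block LU (or UL) factorization, using the invertibility of $A$ to eliminate the off-diagonal block. Specifically, I would verify the identity
\[
\begin{pmatrix} A & B \\ C & D \end{pmatrix} = \begin{pmatrix} I & 0 \\ CA^{-1} & I \end{pmatrix} \begin{pmatrix} A & B \\ 0 & D - CA^{-1}B \end{pmatrix}
\]
by direct block multiplication. This reduces the problem to computing determinants of block triangular matrices.

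Once this factorization is in hand, taking determinants and using multiplicativity gives the result in one line, provided we know that a block triangular matrix of the form $\begin{pmatrix} P & Q \\ 0 & R \end{pmatrix}$ (with $P, R$ square) has determinant $\det P \cdot \det R$. The lower triangular factor has all diagonal blocks equal to $I$, so its determinant is $1$; the upper triangular factor then contributes $\det A \cdot \det(D - CA^{-1}B)$, which is the claim.

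Thus the only real step requiring justification is the block triangular determinant formula. I would prove this either by expanding via the Leibniz formula (noting that any permutation mixing the two blocks picks up a zero entry from the off-diagonal zero block) or, more cleanly, by a further factorization
\[
\begin{pmatrix} P & Q \\ 0 & R \end{pmatrix} = \begin{pmatrix} P & 0 \\ 0 & I \end{pmatrix}\begin{pmatrix} I & P^{-1}Q \\ 0 & I \end{pmatrix}\begin{pmatrix} I & 0 \\ 0 & R \end{pmatrix}
\]
when $P$ is invertible (the general case then follows by continuity in $P$), where the middle factor is unipotent and the outer two are block diagonal with obvious determinants. This is the one mildly fiddly point; everything else is direct bookkeeping.
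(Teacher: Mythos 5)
Your proof is correct and takes essentially the same approach as the paper's: a block triangular factorization exploiting the invertibility of $A$ to eliminate the off-diagonal block, followed by multiplicativity of the determinant. The paper uses the block UL decomposition
\[
\begin{pmatrix} A & B\\ C & D \end{pmatrix} = \begin{pmatrix}A & 0\\ C & D-CA^{-1}B \end{pmatrix}\begin{pmatrix} I & A^{-1}B\\0 & I \end{pmatrix}
\]
rather than your LU ordering, but this is a cosmetic difference; your additional care about justifying the block triangular determinant formula, which the paper leaves implicit, is a reasonable bit of bookkeeping.
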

\begin{proof}
Carry out the (block) UL decomposition:
\[
\begin{pmatrix} A & B\\ C & D \end{pmatrix} = \begin{pmatrix}A & 0\\ C & D-CA^{-1}B \end{pmatrix}\begin{pmatrix} I & A^{-1}B\\0 & I \end{pmatrix}.
\]
\end{proof}

Let $x_i$ denote the $i$th column of $X$ without the $i$th entry. Let $X^{(i)}$ denote the matrix $X$ without the $i$th row and column.

\begin{lemma}
\[
\left((X-z)^{-1}\right)_{ii} = \left(X_{ii} - z - x_i^T (X^{(i)} - z) x_i \right)^{-1}.
\]
\end{lemma}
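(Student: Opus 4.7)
The plan is to apply Cramer's rule for a single diagonal entry of the inverse, and then evaluate the two determinants that appear using the block-determinant identity from the preceding lemma. (I read the right-hand side as $(X_{ii}-z - x_i^T(X^{(i)}-z)^{-1}x_i)^{-1}$, the inverse being what is needed for the formula to make sense and for its intended use as the Schur complement; I assume the missing exponent is a typo.)

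First I would recall that for any invertible matrix $M$, Cramer's rule gives $(M^{-1})_{ii} = C_{ii}/\det M$, where $C_{ii} = (-1)^{2i}\det M^{(i)} = \det M^{(i)}$ is the $(i,i)$ cofactor. Applying this to $M = X-z$ and noting that deleting the $i$th row and column of $X-zI$ produces $X^{(i)} - zI$, I get
\[
\bigl((X-z)^{-1}\bigr)_{ii} = \frac{\det(X^{(i)} - z)}{\det(X-z)}.
\]

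Next I would reduce the denominator to a block form to which the previous lemma applies. Let $P$ be the permutation matrix that moves the $i$th row to the last row and the $i$th column to the last column. Since $P(X-z)P^T$ has the same determinant as $X-z$ and reads
\[
P(X-z)P^T = \begin{pmatrix} X^{(i)} - z & x_i \\ x_i^T & X_{ii} - z \end{pmatrix},
\]
the previous lemma (applied with $A = X^{(i)} - z$, $B = x_i$, $C = x_i^T$, $D = X_{ii} - z$) yields
\[
\det(X-z) = \det(X^{(i)} - z)\cdot\bigl(X_{ii} - z - x_i^T (X^{(i)} - z)^{-1} x_i\bigr).
\]
The hypothesis of that lemma---that the top-left block be invertible---holds whenever $z \in \mathbb{H}$, since $X^{(i)}$ is real symmetric, so all its eigenvalues lie on the real line while $z$ does not.

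Dividing the expressions from the two previous steps, the factor $\det(X^{(i)} - z)$ cancels and the claimed formula drops out. The only real thing to watch is the sign in the cofactor (automatically $+1$ on the diagonal) and the bookkeeping when permuting a single row and column to the corner; the permutation is a conjugation by $P$ and so the determinant is unchanged, so there is no sign issue there either. I do not anticipate any genuine obstacle beyond verifying these two points carefully.
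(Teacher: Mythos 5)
Your proof is correct and takes exactly the same route as the paper: Cramer's rule to write $\bigl((X-z)^{-1}\bigr)_{ii} = \det(X^{(i)}-z)/\det(X-z)$, followed by the block-determinant lemma with $A = X^{(i)}-z$, $D = X_{ii}-z$ after permuting the $i$th row and column to the corner. You also correctly identified the typo in the statement (the missing inverse on $X^{(i)}-z$) and spelled out the reordering and invertibility points that the paper dispatches in a parenthetical.
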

\begin{proof}
Apply Cramer's rule:
\[
((X-z)^{-1})_{ii} = \frac{\det(X^{(i)} - z)}{\det(X-z)}
\]
Now use Lemma 1 with $A = X^{(i)} - z$, $D = X_{ii}-z$ (note that the lemma is only directly applicable if $i=n$, but that's a trivial reordering of the basis).
\end{proof}
Hence,
\[
g_N(z) = \frac1N \sum_i ((X-z)^{-1})_{ii} = \frac1N \sum_i \frac{1}{X_{ii} - z - x_i^T (X^{(i)} - z)^{-1} x_i}.
\]
We want to get rid of the $X_{ii}$ in the above expression; this is done by the following
\begin{lemma}
Let $X_N$ by a sequence of symmetric matrices with independent entries s.t. $\BE X_{ij} = 0$ and $\BE (\sqrt{N} X_{ij}) = 1$. Let $\tilde X_N$ be defined by setting $\tilde X_{ij} = 0$ if $i = j$ and $X_{ij}$ otherwise. Then $\abs{g_{\tilde X_N}(z) - g_{X_N}(z)} \to 0$ in probability, for all $z$, as $N \to \infty$.
\end{lemma}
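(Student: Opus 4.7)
The plan is to apply the resolvent identity to rewrite $g_{\tilde X_N}(z) - g_{X_N}(z)$ as a weighted trace involving only the diagonal of $X_N$, then use the uniform resolvent bound $\|G(z)\| \leq 1/|\Im z|$ together with the fact that the diagonal entries have variance $1/N$ to control the result.

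First I would let $D$ denote the diagonal matrix with entries $D_{ii} = X_{ii}$, so that $\tilde X_N = X_N - D$. Applying the resolvent identity with $A = -D$ gives
\[
G_{\tilde X_N}(z) - G_{X_N}(z) = G_{\tilde X_N}(z)\, D\, G_{X_N}(z).
\]
Taking $\frac{1}{N}\Tr$ of both sides, using cyclicity of the trace, and exploiting that $D$ is diagonal,
\[
g_{\tilde X_N}(z) - g_{X_N}(z) = \frac{1}{N}\sum_{i=1}^N X_{ii}\, \bigl(G_{X_N}(z)\, G_{\tilde X_N}(z)\bigr)_{ii}.
\]

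Next, since $X_N$ and $\tilde X_N$ are real symmetric, their resolvents at $z \in \BH$ have operator norm at most $1/\eta$, where $\eta = \Im z > 0$. Writing the diagonal entry as an inner product and applying Cauchy--Schwarz twice, $|(G_{X_N}G_{\tilde X_N})_{ii}| \leq \|G_{X_N}\|\cdot\|G_{\tilde X_N}\| \leq 1/\eta^2$, so
\[
\bigl|g_{\tilde X_N}(z) - g_{X_N}(z)\bigr| \leq \frac{1}{N\eta^2}\sum_{i=1}^N |X_{ii}|.
\]

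Finally, by Jensen's inequality and the variance assumption, $\BE|X_{ii}| \leq (\BE X_{ii}^2)^{1/2} = 1/\sqrt{N}$, so the right-hand side has expectation at most $1/(\eta^2\sqrt{N}) \to 0$ as $N \to \infty$. Markov's inequality then yields convergence in probability. There is no real obstacle beyond tracking signs in the resolvent identity and invoking the uniform bound $\|G(z)\|\leq 1/|\Im z|$; the substance of the argument is that a diagonal perturbation whose entries average to size $O(1/\sqrt N)$ is invisible after sandwiching by bounded resolvents and averaging over $N$ indices.
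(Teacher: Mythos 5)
Your proof is correct, but it takes a genuinely different route from what the paper intends. The paper states that this lemma (like the truncation lemma that follows it) is ``essentially a corollary of the Hoffman--Wielandt inequality,'' and for the truncation lemma it spells this out: one bounds $\BE\abs{g_{\tilde X_N}(z)-g_{X_N}(z)}^2$ by noting that $\lambda \mapsto (\lambda-z)^{-1}$ is Lipschitz with constant $1/\eta^2$ ($\eta=\Im z$), applies Cauchy--Schwarz to convert $\frac1N\sum_i\abs{\lambda_i^{X}-\lambda_i^{\tilde X}}$ into $\frac{1}{\sqrt N}\bigl(\sum_i(\lambda_i^X-\lambda_i^{\tilde X})^2\bigr)^{1/2}$, and then invokes Hoffman--Wielandt to dominate this by $\frac{1}{\sqrt N}\bigl(\Tr(X_N-\tilde X_N)^2\bigr)^{1/2}=\frac{1}{\sqrt N}\bigl(\sum_i X_{ii}^2\bigr)^{1/2}$, whose squared expectation is $O(1/N)$. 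Your argument instead stays entirely at the level of resolvents: you write the difference of Stieltjes transforms via the resolvent identity as $\frac1N\Tr\bigl(G_{X_N}G_{\tilde X_N}D\bigr)$, use $\norm{G(z)}\leq 1/\eta$ to bound each diagonal entry of $G_{X_N}G_{\tilde X_N}$, and finish with $\BE\abs{X_{ii}}\leq 1/\sqrt N$ and Markov. Both arguments are sound and both give an $O(N^{-1/2})$ rate; the paper's gives an $L^2$ bound while yours gives an $L^1$ bound, and either implies convergence in probability. Your approach has the advantage of needing only the resolvent identity (already established in the text) and the trivial norm bound on resolvents of self-adjoint matrices, rather than the Hoffman--Wielandt eigenvalue perturbation inequality; it also makes the mechanism (``a diagonal perturbation of typical size $N^{-1/2}$ is washed out after sandwiching by bounded resolvents and averaging over $N$ indices'') somewhat more transparent. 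The Hoffman--Wielandt route, on the other hand, is the uniform tool the paper uses for all such perturbation lemmas, including the truncation one, so it keeps the exposition parallel.
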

It is also useful to be able to assume that the entries of the random matrix are bounded. For this we have the following tool.

\begin{lemma}
Let $X_N$ be a symmetric random matrix, such that $Y_{ij}=\sqrt{N}X_{ij}$ are
i.i.d. for $i \leq j$ and all $N$. Suppose $\mathbb{E} Y_{ij}=0$ and $\mathbb{E} Y_{ij}^2=1$. Let
\begin{equation*}
\widehat{X}_{ij}=X_{ij}1_{\sqrt{N}|X_{ij}|<C}-\mathbb{E} (X_{ij}1_{\sqrt{N}%
|X_{ij}|<C}).
\end{equation*}
Then  for every $\varepsilon >0$, there exists $C$ that depends on $\varepsilon $, $z$, and the law of $Y_{ij}$ only, such that
\begin{equation*}
\mathbb{P}\{\abs{g_{\widehat{X}_N}(z) - g_{X_N}(z)}>\varepsilon \}<\varepsilon .
\end{equation*}
\end{lemma}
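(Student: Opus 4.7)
The plan is to reduce the claim to a second-moment bound by combining the Lipschitz property of $g$ in the matrix entries with the smallness of the truncation tail. By Hoffman--Wielandt (Lemma~\ref{hw ineq}), Cauchy--Schwarz, and the elementary inequality $|(\lambda-z)^{-1}-(\lambda'-z)^{-1}|\leq|\lambda-\lambda'|/(\Im z)^2$, the Stieltjes transform satisfies
\[
|g_{X}(z)-g_{X'}(z)|\;\leq\;\frac{\sqrt{2}}{\sqrt{N}\,(\Im z)^2}\Bigl(\sum_{i\leq j}(X_{ij}-X'_{ij})^2\Bigr)^{1/2};
\]
this is the content of the Lipschitz corollary above, and the crucial $1/\sqrt{N}$ factor is exactly the right size to absorb the $O(N^{2})$ independent entries that differ between $X_{N}$ and $\widehat{X}_{N}$.

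I would next bound the expected squared Frobenius distance. Because $\BE X_{ij}=0$, one may rewrite
\[
X_{ij}-\widehat{X}_{ij}\;=\;X_{ij}\mathbf{1}_{\sqrt{N}|X_{ij}|\geq C}\;-\;\BE\!\bigl(X_{ij}\mathbf{1}_{\sqrt{N}|X_{ij}|\geq C}\bigr),
\]
which is centered, so its variance is at most $\BE[X_{ij}^{2}\mathbf{1}_{\sqrt{N}|X_{ij}|\geq C}]=\tau(C)/N$, where $\tau(C):=\BE[Y^{2}\mathbf{1}_{|Y|\geq C}]$ depends only on the common law of the $Y_{ij}$. Summing over the $N(N+1)/2$ entries on and above the diagonal gives $\BE\sum_{i\leq j}(X_{ij}-\widehat{X}_{ij})^{2}\leq N\tau(C)$.

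Feeding this into the squared Lipschitz bound yields $\BE|g_{\widehat{X}_{N}}(z)-g_{X_{N}}(z)|^{2}\leq 2\tau(C)/(\Im z)^{4}$, and Chebyshev's inequality then produces
\[
\BP\{|g_{\widehat{X}_{N}}(z)-g_{X_{N}}(z)|>\varepsilon\}\;\leq\;\frac{2\tau(C)}{\varepsilon^{2}(\Im z)^{4}}.
\]
Since $\BE Y_{ij}^{2}=1<\infty$, dominated convergence gives $\tau(C)\to 0$ as $C\to\infty$, so choosing $C=C(\varepsilon,\Im z,\mathrm{law}\,Y)$ so large that $\tau(C)<\varepsilon^{3}(\Im z)^{4}/2$ finishes the proof. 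There is no real obstacle here; the argument is a clean instance of ``Lipschitz perturbation plus second-moment truncation,'' and the only point needing a moment's thought is that the centering in the definition of $\widehat{X}_{ij}$ is compatible with the variance estimate, which is immediate from the hypothesis $\BE X_{ij}=0$.
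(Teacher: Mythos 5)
Your argument is correct and follows essentially the same route as the paper: reduce to a second-moment bound via Chebyshev, control $\BE|g_{\widehat{X}_N}(z)-g_{X_N}(z)|^2$ through the Hoffman--Wielandt/Lipschitz estimate, and observe that $\BE(\sqrt N X_{12}-\sqrt N \widehat X_{12})^2$ can be made small uniformly in $N$ using $\BE Y_{ij}^2=1$. Your rewriting of $X_{ij}-\widehat X_{ij}$ as a centered tail and the resulting one-line variance bound $\Var\le\BE[Y^2\mathbf 1_{|Y|\ge C}]$ is a slightly cleaner way to reach the same estimate the paper obtains via $(a+b)^2\le 2a^2+2b^2$.
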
 

These results are essentially corollaries of the Hoffman-Wielandt inequality, Lemma~\ref{hw ineq}.
Here is the proof of the result about truncation.

\begin{proof}
Clearly, it is enough to show that $\mathbb{E}\abs{g_{\widehat{X}_N}(z) - g_{X_N}(z)}^2$
can be made arbitrarily small uniformly in $N.$ By using the Hoffman-Wielandt inequality, we need to show that 
\begin{eqnarray*}
\frac{1}{N}\mathbb{E}\mathrm{Tr}\left( X_N-\widehat{X}_N\right)^2 &=&\frac{1}{N^2}\sum_{i,j=1}^{N}\mathbb{E}\left( \sqrt{N}X_{ij}-\sqrt{N}\widehat{X}_{ij}\right) ^2 \\
&=&\mathbb{E}\left( \sqrt{N}X_{12}-\sqrt{N}\widehat{X}_{12}\right) ^2
\end{eqnarray*}%
can be made arbitrarily small uniformly in $N$. However, 
\begin{equation*}
\mathbb{E}\left( \sqrt{N}X_{12}-\sqrt{N}\widehat{X}_{12}\right) ^{2}\leq
2\left\{ \left[ \mathbb{E}(Y_{12}1_{Y_{12}<C})\right] ^{2}+\mathbb{E}%
(Y_{12}1_{Y_{12}>C})^{2}\right\} .
\end{equation*}%
Since $\mathbb{E}Y_{12}=0$ and $\mathbb{E}Y_{12}^{2}=1,$ for every $\varepsilon >0,$ we can choose $C$ so large that 
$\left[ \mathbb{E}(Y_{12}1_{Y_{12}<C})\right] ^{2}<\varepsilon $ and $\mathbb{E}(Y_{12}1_{Y_{12}>C})^{2}<\varepsilon ,$ 
which completes the proof. \end{proof}

In light of these results, we will assume that $X_{ii} = 0$, i.e.
\begin{equation}
\label{gNz}
g_N(z) = \frac1N \sum_i \frac{1}{- z - x_i^T (X^{(i)} - z)^{-1} x_i}.
\end{equation}
In addition, we can and will assume that $|X_{ij}| < C.$

We would like to show (and this is the crux of the proof) that the right-hand side of (\ref{gNz}) approaches  $\frac{1}{-z-\ov g_N(z)}$ in probability, where $\ov g_N(z) = \BE g_N(z)$.

Write it as
\[
\frac1N \sum((-z-\ov g_N(z)) + (\ov g_N(z) - x_i^T(X^{(i)}-z)^{-1}x_i))^{-1}
\]
and note that, by the definition of the Stieltjes transform, the quantity $-z-\ov g_N(z)$ has nonzero imaginary part for all $z \in \BC \setminus \BR$.

It suffices to prove that
\[
\BP(\abs{\ov g_N(z) - x_i^T (X^{(i)} - z) x_i)} > \epsilon) < c_1 \exp(-c_2 N^\alpha)
\]
for some $c_1, c_2, \alpha > 0$. Recall that $g_N(z) = \frac1N \Tr((X-z)^{-1})$.
This is a consequence of the following two claims.

\begin{claim}\label{claim 1}
With high probability (at least $1-\exp(-cN^\alpha)$ for some $c, \alpha > 0$) we have 
\begin{equation}
\BP \{\abs{x_i^T (X^{(i)} - z)^{-1} x_i - \frac1N \Tr(X^{(i)}-z)^{-1}}>\varepsilon\}<c_1 \exp(-c_2(\varepsilon)N^\alpha).
\end{equation}
Here the expectation is conditional on the realization of $X^{(i)}$.
\end{claim}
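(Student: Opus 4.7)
The plan is to work conditionally on $X^{(i)}$, treating $A := (X^{(i)}-z)^{-1}$ as deterministic. Because $X^{(i)}$ is symmetric and $\Im z = \eta \neq 0$, we have the deterministic bounds
$$\|A\|_{\mathrm{op}} \leq \frac1{|\eta|}, \qquad \|A\|_F^2 = \sum_{k} \frac1{|\lambda_k^{(i)} - z|^2} \leq \frac{N-1}{\eta^2}.$$
The vector $x_i$ is independent of $X^{(i)}$ by the Wigner hypothesis (its coordinates are the off-diagonal entries of row $i$, while $X^{(i)}$ is obtained by deleting row and column $i$), and its entries have mean $0$ and variance $1/N$. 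A direct computation yields the conditional mean
$$\BE\bigl[x_i^T A x_i \mid X^{(i)}\bigr] = \sum_{j,k}A_{jk}\,\BE[x_{i,j}x_{i,k}] = \frac1N\Tr(A),$$
so the task reduces to a conditional concentration inequality for the quadratic form $x_i^T A x_i$ around its mean.

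For this step I would rescale by $Y := \sqrt N\,x_i$, whose coordinates are i.i.d.\ with mean $0$, variance $1$, and (thanks to the truncation lemma already proved) uniformly bounded by a constant $C$. The statement we need becomes
$$\BP\bigl(|Y^T A Y - \Tr A| > N\varepsilon \bigm| X^{(i)}\bigr) \leq c_1\exp(-c_2 N^\alpha),$$
and I would split the quadratic form into diagonal and off-diagonal pieces,
$$Y^T A Y - \Tr A = \sum_j A_{jj}(Y_j^2 - 1) + \sum_{j\neq k} A_{jk} Y_j Y_k.$$
The diagonal piece is a sum of independent, centered, bounded random variables whose variances add up to at most $C^4 \|A\|_F^2 = O(N/\eta^2)$, so Bernstein's inequality gives a sub-Gaussian tail $\exp(-cN\varepsilon^2)$ at scale $N\varepsilon$. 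For the off-diagonal piece I would either invoke the Hanson--Wright inequality directly, or write the double sum as a martingale in the natural filtration of $Y_1,\ldots,Y_{N-1}$: the step-$j$ increment is $Y_j\sum_{k<j}(A_{jk}+A_{kj})Y_k$, whose sum of conditional variances is bounded by $\|A\|_F^2\,\|Y\|^2 = O(N^2/\eta^2)$, and a Freedman--Bernstein martingale inequality again yields $\exp(-cN\varepsilon^2)$. Combining the two bounds and dividing by $N$ proves the claim, even with exponent $\alpha = 1$.

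The main obstacle, and the reason one cannot simply invoke the Herbst machinery of the previous subsection, is that the function $F(x) = x^T A x$ has gradient $(A+A^T)x$ and is therefore not globally Lipschitz, so the LSI-based concentration does not apply off-the-shelf. The two workarounds are built into the plan above: the truncation lemma already forces $\|x_i\|\leq C$ deterministically, which both justifies the Bernstein/martingale decomposition and would, alternatively, allow one to replace $F$ by a Lipschitz extension on the ball of radius $C$ and then appeal to Herbst with Lipschitz constant $2C/|\eta|$, giving concentration at rate $\exp(-c N\varepsilon^2)$ via the LSI constant $c/N$ of the joint law of the entries of $x_i$. The ``with high probability'' quantifier in the claim is absorbed into the probability-one event produced by truncation; no additional spectral-norm control of $X^{(i)}$ is needed because $\|A\|_{\mathrm{op}}\leq 1/|\eta|$ holds for every realization.
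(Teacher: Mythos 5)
Your proposal is correct in outline and in fact covers both of the paper's suggested routes. Your ``alternative'' -- replacing $F$ on the ball $\|x_i\|\leq C$ (forced by truncation) by a Lipschitz extension and then invoking Herbst with the LSI constant $c/N$ -- is precisely the paper's own argument: the paper computes the conditional mean $\frac1N\Tr B$ exactly as you do, then says that $\sqrt{N}X_{ij}$ remains LSI after truncation and that the gradient of the quadratic form is bounded because the entries are bounded, whence Herbst applies. You have spelled out the Lipschitz-constant bookkeeping more explicitly than the paper does, which is an improvement. Your ``primary'' route (diagonal/off-diagonal split plus Bernstein and Hanson--Wright) is the non-LSI alternative that the paper only alludes to in its following remark (``various techniques available for quadratic forms... see example sheet 2''), and it is a legitimate, self-contained proof.

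One arithmetic slip to fix: in the martingale variant for the off-diagonal piece, your bound on the quadratic variation, $\|A\|_F^2\,\|Y\|^2 = O(N^2/\eta^2)$, is too crude to yield the advertised rate. At threshold $t=N\varepsilon$ Freedman's inequality then gives $\exp\bigl(-cN^2\varepsilon^2/(N^2/\eta^2)\bigr) = \exp(-c\eta^2\varepsilon^2)$, which does not decay in $N$. To make the martingale route work you would need to show the quadratic variation is actually $O(N/\eta^2)$ with high probability (its expectation is $\sum_j\sum_{k<j}(A_{jk}+A_{kj})^2 \leq 4\|A\|_F^2 = O(N/\eta^2)$, so this is plausible but requires a further concentration step). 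The Hanson--Wright inequality and the Lipschitz--Herbst route avoid this issue and each closes the argument cleanly, so the claim stands.
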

\begin{claim}\label{claim 2}
With probability 1, 
\begin{equation}
\frac1N \abs{\Tr(X^{(i)}-z)^{-1} - \Tr(X-z)^{-1}} < \frac{c(z)}{N}.
\end{equation}
\end{claim}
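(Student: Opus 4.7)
The plan is to establish the bound deterministically, i.e.\ for every realization of $X$, which then trivially yields the almost sure statement. The key tool will be Cauchy's interlacing theorem for principal submatrices, combined with a short integration-by-parts calculation.

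Concretely, let $\lambda_1 \leq \cdots \leq \lambda_N$ denote the eigenvalues of $X$ and $\mu_1 \leq \cdots \leq \mu_{N-1}$ those of $X^{(i)}$. By Cauchy interlacing, $\lambda_k \leq \mu_k \leq \lambda_{k+1}$ for $k = 1, \ldots, N-1$. I will rewrite each trace as a Stieltjes integral against the eigenvalue counting functions $N_X$ and $N_{X^{(i)}}$, so that
\[
\Tr(X - z)^{-1} - \Tr(X^{(i)} - z)^{-1} = \int_{\BR} \frac{1}{x-z}\, dF(x), \qquad F(x) := N_X(x) - N_{X^{(i)}}(x).
\]
A case-by-case check using interlacing shows that $F(x) \in \{0,1\}$ for every $x \in \BR$.

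Next, I will integrate by parts in the Riemann--Stieltjes sense. The boundary terms vanish because $F$ is bounded while $(x-z)^{-1} \to 0$ at $\pm\infty$, so the integral above equals $\int_{\BR} F(x)(x-z)^{-2}\, dx$. Using $|F|\leq 1$ together with the elementary identity $\int_{\BR} |x-z|^{-2}\, dx = \pi/|\Im z|$, the modulus is at most $\pi/|\Im z|$. Dividing both sides by $N$ yields the claim with $c(z) = \pi/|\Im z|$.

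I do not anticipate any substantive obstacle: the estimate is a purely deterministic consequence of the principal-submatrix structure (equivalently, of the fact that $X$ differs from a zero-padded extension of $X^{(i)}$ by a perturbation of rank at most two), so it holds for every realization of $X$ and hence with probability one. The only minor care needed is the justification of the Stieltjes integration by parts against the step function $F$, which is routine.
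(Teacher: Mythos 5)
Your proof is correct and uses the same key tool as the paper, which merely cites Cauchy's interlacing theorem without elaborating; your passage to the counting function $F = N_X - N_{X^{(i)}}$, the observation that interlacing forces $0 \le F \le 1$, and the integration by parts giving $\int_{\BR} F(x)(x-z)^{-2}\,dx$ is the standard way to turn that hint into a complete argument, and it moreover produces the explicit constant $c(z) = \pi/|\Im z|$.
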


\begin{proof}[Proof of claim \ref{claim 1}]
Let $B = (X^{(i)} - z)^{-1}$, $a = x_i$, and note that $a$ and $B$ are independent. Then
\[
\BE[a^T B a \vert B] = \BE[\sum a_i B_{ij} a_j \vert B] = \sum_i B_{ii} \BE a_i^2 = \frac1N \sum B_{ii} = \frac1N \Tr B.
\]
Next, we use concentration inequalities. The distribution of $\sqrt{N} X_{ij}$ is LSI. (It remains LSI even after truncation.) In addition the norm of the gradient of the quadratic form is bounded since the random variables $X_{ij}$ are bounded.  
Hence, the Herbst lemma is applicable and the conclusion follows. 
\end{proof}
\begin{remark}
The proof of this lemma can be done without invoking LSI, because this is a quadratic form and there are various techniques available for it (vaguely similar to the central limit theorems for sums of independent random variables). See example sheet 2.
\end{remark}

\begin{proof}[Proof of claim \ref{claim 2}]
The proof can be done using \emph{interlacing inequalities}: if $\lambda_1 \leq \dotsc \leq \lambda_N$ are eigenvalues of $X_N$, and $\mu_1 \leq \dotsc \leq \mu_{N-1}$ are eigenvalues of $X^{(i)}_N$, then
\[
\lambda_1 \leq \mu_1 \leq \lambda_2 \leq \dotsc \leq \mu_{N-1} \leq \lambda_N.
\]
\end{proof}

\newpage
\section{Gaussian ensembles}
Let $\xi_{ij}$, $\eta_{ij}$ be iid $\sim \CN(0,1)$ (i.e. iid standard normals). Let $X$ be an $N \times N$ matrix.
\begin{definition}
$X$ is called the Gaussian Unitary Ensemble ($\beta = 2$) if it is complex-valued, hermitian ($X_{ij} = \ov X_{ji}$), and the entries satisfy
\[
X_{ij} = \begin{cases}
\xi_{ii}, & i=j\\
\frac{1}{\sqrt{2}}(\xi_{ij} + \sqrt{-1}\eta_{ij}), & i<j
\end{cases}
\]
\end{definition}
\begin{definition}
$X$ is called the Gaussian Orthogonal Ensemble ($\beta = 1$) if it is symmetric and real-valued $(X_{ij} = X_{ji})$, and has
\[
X-{ij} = \begin{cases}
\sqrt{2}\xi_{ii}, & i=j\\
\xi_{ij}, & i<j
\end{cases}
\]
\end{definition}
There is also a Gaussian Symplectic Ensemble ($\beta = 4$) defined using quaternions, or using $2N \times 2N$ block matrices over $\BC$.

Let us compute the joint distribution of the GUE:
\begin{multline*}
\BP(X \in dx) =\\
c_N \exp\left(-\frac12 \sum_i x_{ii}^2 - \frac12 \sum_{i<j} (\Re x_{ij})^2 + (\Im x_{ij})^2\right) \prod_i dx_{ii} \prod_{i < j} d\Re x_{ij} d\Im x_{ij}\\
= c^{(2)}_N \exp(-\frac12 \Tr(X^2)) \prod_i dx_{ii} \prod_{i<j} d\Re x_{ij} d\Im x_{ij}.
\end{multline*}
We will refer to the volume element $d^{(N,2)}x$.

The main observation is that this is invariant under unitary transformations, because trace is. That is,
\begin{multline*}
\BP(UXU^* \in dx) = c^{(2)}_N \exp(-\frac12 \Tr(UXU^*)^2) d'^{(N,2)}x\\
= c^{(2)}_N \exp(-\frac12 \Tr X^2) d^{(N,2)}x = \BP(X \in dx),
\end{multline*}
where $d'^{(N,2)}x$ is induced by the transformation $X\rightarrow UXU^*$ from $d^{(N,2)}x$ and we used the fact that $d'^{(N,2)}x =d^{(N,2)}x$. (See proof of this claim in Section 5.2 of Deift's book  \cite{deift99}.) 

For the orthogonal ensemble ($\beta = 1$), we get
\[
\BP(X \in dx) = c^{(1)}_N \exp(-\frac14 \Tr X^2) d^{(N,1)}x,
\]
where $d^{(N,1)}x = \prod_{i \leq j} dx_{ij}$. For the symplectic ensemble ($\beta = 4$), we get
\[
\BP(X \in dx) = c^{(4)}_N \exp(-\Tr X^2) d^{(N,4)}x,
\]
Of course, the orthogonal, resp. symplectic, ensemble has distribution invariant to orthogonal, resp. symplectic transformations.

The reason we care about these ensembles is that for them we can compute the exact (non-asymptotic) distributions of eigenvalues.

\begin{theorem}
Suppose $X_N \in \CH^\beta$ for $\beta = 1, 2, 4$. Then the distribution of the eigenvalues of $X_N$ is given by
\[
p_N(x_1,\dotsc,x_N) = \ov c_N^{(\beta)} \prod_{i < j} \abs{x_i - x_j}^\beta \exp(-\frac{\beta}{4} \sum x_i^2) \prod dx_i
\]
where the constants $\ov c_N^{(\beta)}$ are normalization constants and can in theory be computed explicitly.
\end{theorem}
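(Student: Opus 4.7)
The plan is to diagonalize the matrix and perform a change of variables from matrix entries to eigenvalues together with the ``angular'' parameters of the diagonalizing transformation, showing that the Vandermonde factor $\prod_{i<j}|x_i-x_j|^\beta$ is precisely the Jacobian and that the angular integration yields only a finite constant (absorbed into $\bar c_N^{(\beta)}$).

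Concretely, write $X=U\Lambda U^{*}$, where $\Lambda=\mathrm{diag}(x_{1},\dots,x_{N})$ with $x_{1}<x_{2}<\dots<x_{N}$ and $U$ lies in the appropriate compact group $G=O(N),U(N),Sp(N)$ for $\beta=1,2,4$. Away from the (measure-zero) locus of coinciding eigenvalues this parametrization is a local diffeomorphism, unique up to a finite group acting by permuting eigenvalues and by multiplication of each column of $U$ by a phase (a sign for $\beta=1$, a unit complex number for $\beta=2$, a unit quaternion for $\beta=4$). The strategy is to fix a slice of representatives and express $d^{(N,\beta)}x$ on the open dense set in terms of $d\Lambda\cdot dU$ (with $dU$ the Haar measure on $G$), then integrate over $G$.

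The key computation is the Jacobian. Differentiate $X=U\Lambda U^{*}$ and use that the action $X\mapsto U^{*}XU$ preserves the Lebesgue measure $d^{(N,\beta)}x$ (this is exactly the invariance used in the excerpt). Setting $dA:=U^{*}dU$, which is anti-Hermitian (anti-symmetric / anti-self-adjoint-quaternionic in the other cases), one finds
\[
U^{*}dX\,U=d\Lambda+[dA,\Lambda].
\]
The diagonal of the right-hand side is $d\Lambda$, since $[dA,\Lambda]_{ii}=(dA)_{ii}(x_{i}-x_{i})=0$; the $(i,j)$ off-diagonal entry ($i<j$) equals $(dA)_{ij}(x_{j}-x_{i})$. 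Each off-diagonal entry of $dA$ carries $\beta$ independent real parameters (one for $\beta=1$, two real and imaginary parts for $\beta=2$, four quaternionic components for $\beta=4$), so rescaling each of these by $(x_{j}-x_{i})$ contributes a factor $|x_{j}-x_{i}|^{\beta}$. Multiplying over $i<j$ produces the Vandermonde factor
\[
\prod_{i<j}|x_{i}-x_{j}|^{\beta}.
\]
Thus $d^{(N,\beta)}x = \prod_{i<j}|x_{i}-x_{j}|^{\beta}\,\prod_{i}dx_{i}\,d\nu(U)$, where $d\nu$ is (up to normalization) Haar measure on the quotient of $G$ by the stabilizer of $\Lambda$.

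Plugging into the joint law of the ensemble, the density $\exp(-\tfrac{\beta}{4}\Tr X^{2})=\exp(-\tfrac{\beta}{4}\sum x_{i}^{2})$ depends only on $\Lambda$, so the $U$ integral factorises and contributes a finite constant $|G|/|\mathrm{stab}|$ which is absorbed into the normalizing constant $\bar c_{N}^{(\beta)}$. A symmetrization over the $N!$ orderings of the $x_{i}$ removes the constraint $x_{1}<\dots<x_{N}$ and gives the stated symmetric density on $\BR^{N}$.

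The main obstacle is the Jacobian calculation, in particular a clean bookkeeping of the parameter count for $dA$ in each symmetry class (especially the quaternionic case $\beta=4$, where one must use a $2N\times 2N$ complex realization to count real degrees of freedom correctly), together with verifying that the coincidence locus $\{x_{i}=x_{j}\text{ for some }i\neq j\}$ has measure zero so that the local change of variables extends to a global identity of measures. Once the Jacobian is in hand, the rest (invariance of the Gaussian weight, finiteness of the angular integral, symmetrization in the $x_{i}$) is routine.
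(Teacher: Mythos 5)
Your proposal is correct and follows essentially the same route as the paper: diagonalize $X=U\Lambda U^{*}$, conjugate back by $U$ (which is an isometry of the trace inner product, hence measure-preserving), observe that the pushed-forward differential is $d\Lambda+[dA,\Lambda]$ with the commutator contributing a factor $(\lambda_j-\lambda_i)$ for each of the $\beta$ real parameters in the $(i,j)$ entry of $dA$, and integrate out the angular variables. The only difference is presentational: the paper phrases the Jacobian computation via the isometry $L_U$ and an explicit block-triangular matrix in local coordinates and only writes out $\beta=2$, whereas you use the compact $U^{*}dX\,U=d\Lambda+[dA,\Lambda]$ identity and carry the real-parameter count uniformly across $\beta=1,2,4$.
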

The proof will proceed by changing our basis from the matrix entries to the set of eigenvalues and eigenvectors, and then integrating out the component corresponding to the eigenvectors.

\begin{proof}
Write the (Hermitian) matrix $X$ as $X = U\Lambda U^*$, where $\Lambda$ is diagonal and $U$ is unitary; the columns of $U$ are the eigenvectors of $X$. When the eigenvalues of $X$ are are all distinct, this decomposition is unique up to (a) permuting the eigenvalues, and (b) multiplying $U$ by a diagonal matrix with entries $e^{i\theta_1}, e^{i\theta_2}, \dotsc, e^{i\theta_N}$.

Formally, let $F: U(N) \times \BR^N \to \CH^{(N)}$ be given by $F(U,\Lambda) = U\Lambda^* U$; then over points $X$ with distinct eigenvalues, the fibre of $F$ is $\BT^N \times S_N$. In particular, we can define a local isomorphism $\tilde F: (U(N) / \BT^N) \times (\BR^N / S_N) \to \CH^{(N)}$. (For an explicit example, take the eigenvalues to be increasing, and the first nonzero coordinate of each eigenvector to be real and positive.)

\begin{lemma}
The set of Hermitian $N \times N$ matrices with distinct eigencalues is open, dense, and has full measure.
\end{lemma}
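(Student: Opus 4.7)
The plan is to realize the three properties simultaneously by exhibiting the complementary set (Hermitian matrices with at least one repeated eigenvalue) as the zero locus of a single nonzero real polynomial on $\mathcal{H}^{(N)} \cong \mathbb{R}^{N^2}$. The polynomial of choice is the discriminant
\[
\Delta(X) = \prod_{i < j} (\lambda_i(X) - \lambda_j(X))^2,
\]
which I would first argue is a genuine polynomial in the entries of $X$: it is a symmetric function of the roots of the characteristic polynomial $p_X(t) = \det(tI - X)$, so by the fundamental theorem of symmetric polynomials it is a polynomial in the coefficients of $p_X$, which in turn are polynomials in the real and imaginary parts of the entries $X_{ij}$. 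A Hermitian matrix has distinct eigenvalues precisely when $\Delta(X) \neq 0$.

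Next I would verify that $\Delta$ is not identically zero on $\mathcal{H}^{(N)}$, which is the only substantive point. For this it suffices to exhibit one Hermitian matrix with distinct eigenvalues, e.g.\ $X_0 = \mathrm{diag}(1, 2, \dotsc, N)$, for which $\Delta(X_0) = \prod_{i<j}(i-j)^2 \neq 0$. So $\Delta$ is a nonzero polynomial on $\mathbb{R}^{N^2}$.

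From here, the three conclusions follow from standard facts about real algebraic varieties. The zero set $Z = \{X : \Delta(X) = 0\}$ is closed (polynomials are continuous), so its complement is open. The zero set of a nonzero real polynomial in $\mathbb{R}^n$ has Lebesgue measure zero (proved by induction on $n$, writing the polynomial as a polynomial in one variable with coefficients in the others and using Fubini together with the fact that a nonzero one-variable polynomial has only finitely many roots), which gives full measure for the complement. Density is then automatic: a closed set of measure zero has empty interior, so its complement is dense.

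The only real obstacle is the polynomiality of $\Delta$ together with confirming it is nonzero; once those are in hand the topological and measure-theoretic conclusions are immediate. I would not bother proving the Fubini argument for vanishing of measure-zero sets of polynomials in detail, as it is entirely standard.
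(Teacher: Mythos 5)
The paper does not actually give a proof of this lemma; it remarks only that the argument ``(esp.\ of the last result) is somewhat technical'' and points the reader to Deift's book. Your discriminant argument is the standard and correct way to prove it, and it supplies exactly what the paper omitted. The key points are all there and sound: $\Delta(X)=\prod_{i<j}(\lambda_i-\lambda_j)^2$ is a symmetric function of the roots of the characteristic polynomial, hence (by the fundamental theorem of symmetric polynomials) a polynomial in its coefficients, which are themselves real polynomials in the $N^2$ real coordinates $\{X_{ii}\}\cup\{\Re X_{ij},\Im X_{ij}\}_{i<j}$ parametrizing $\CH^{(N)}\cong\BR^{N^2}$; the diagonal example $\mathrm{diag}(1,\dotsc,N)$ shows $\Delta\not\equiv 0$; and the zero locus of a nonzero real polynomial on $\BR^{N^2}$ is closed, Lebesgue-null (by the standard Fubini/induction argument), and therefore has empty interior, giving openness, full measure, and density of the complement in one stroke. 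Your ordering of the conclusions --- deriving density from the measure-zero statement rather than arguing it separately --- is tidy and worth keeping. No gaps.
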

The proof (esp. of the last result) is somewhat technical, and we won't give it. A good book to look is P. Deift, \emph{Orthogonal polynomials and random matrices: a Riemann-Hilbert approach} (AMS Courant Lecture Notes, 2000).

Let $\lambda_1,\dotsc,\lambda_N,p_1,\dotsc,p_{N^2-N}$ be local parameters on $(\BR^N / S_N) \times (U(N) / \BT^N)$. We would like to compute the Jacobian $\det(\frac{\del X_{ij}}{\del \lambda_\alpha}, \frac{\del X_{ij}}{\del p_\beta})$.

Let us formalize how we write $X$ as an element of $\BR^{N^2}$. Set
\[
\phi(X) = \begin{pmatrix} \frac{X_{11}}{\sqrt{2}} & \frac{X_{22}}{\sqrt{2}} & \dotsc & \frac{X_{11}}{\sqrt{2}} & \Re X_{12} & \Im X_{12} & \Re X_{13} & \Im X_{13} & \dotsc & \Im X_{N-1, N} \end{pmatrix}
\]
Note that $\abs{\phi(X)}^2 = \Tr(\frac12 X^2)$. Consequently, the transformation $L_U: \BR^{N^2} \to \BR^{N^2}$, $L_U(y) = \phi(U^* \phi^{-1}(y) U)$ is isometric (because conjugation by a unitary matrix preserves trace), i.e. $\det L_U = 1$. We will compute $\det(L_U(\frac{\del X_{ij}}{\del \lambda_\alpha}, \frac{\del X_{ij}}{\del p_\beta}))$, which equals $\det(\frac{\del X_{ij}}{\del \lambda_\alpha}, \frac{\del X_{ij}}{\del p_\beta})$.

Observe
\[
L_U\left(\frac{\del X}{\del \lambda_i}\right) = L_U\left(U\frac{\del \Lambda}{\del \lambda_i}U^*\right) = \frac{\del \Lambda}{\del \lambda_i}
\]
is the vector with all $N^2$ coordinates except the $i$th one equal to zero.

Next,
\[
L_U\left(\frac{\del X}{\del p_\beta}\right) = L_U \left(\frac{\del U}{\del p_\beta} \Lambda U^* + U \Lambda \frac{\del U^*}{\del p_\beta} \right) = U^* \frac{\del U}{\del p_\beta} \Lambda + \Lambda \frac{\del U^*}{\del p_\beta} U
\]
Recall that $U$ is unitary, i.e. $U^* U = I$. Differentiating this with respect to $p_\beta$ gives $\frac{\del U^*}{\del p_\beta}U + U^*\frac{\del U}{\del p_\beta} = 0$. Therefore, we can write
\[
L_U\left(\frac{\del X}{\del p_\beta}\right) = S_\beta \Lambda - \Lambda S_\beta, \quad S_\beta \equiv U^* \frac{\del U}{\del p_\beta}
\]
and
\[
\left(L_U\left(\frac{\del X}{\del p_\beta}\right)\right)_{ij} = (S_\beta)_{ij} (\lambda_j - \lambda_i).
\]

Therefore, $L_U(\frac{\del X_{ij}}{\del \lambda_\alpha}, \frac{\del X_{ij}}{\del p_\beta})$ looks like
\[
\begin{pmatrix}
I_N & 0 & 0 & 0 & \dotsc\\
0 & \Re(S_1)_{12}(\lambda_2-\lambda_1) & \Im(S_1)_{12}(\lambda_2 - \lambda_1) & \Re(S_1)_{13}(\lambda_3-\lambda_1) & \dotsc\\
0 & \Re(S_2)_{12}(\lambda_2-\lambda_1) & \Im(S_2)_{12}(\lambda_2 - \lambda_1) & \Re(S_2)_{13}(\lambda_3-\lambda_1) & \dotsc\\
\dotsc
\end{pmatrix}
\]
(the bottom left block of 0's comes from the derivatives of $\Lambda$ with respect to $p_\beta$, the top right block of 0's comes from the derivatives of $X_{ii}$ with respect to $p_\beta$, which get multiplied by $\lambda_i - \lambda_i$).

Computing the determinant, we will get
\[
\prod_{i < j}(\lambda_j - \lambda_i)^2 \det\begin{pmatrix}
\Re(S_1)_{12} & \Im(S_1)_{12} & \dotsc\\
\Re(S_2)_{12} & \Im(S_2)_{12} & \dotsc\\
\dotsc
\end{pmatrix}
\]
i.e. $\prod_{i < j}(\lambda_j - \lambda_i)^2 f(p_\beta)$ for some function $f$ that we don't care about.

Integrating out the $p_\beta$ gives density of eigenvalues of the GUE as
\[
p(\lambda) = c_N \prod_{i < j} (\lambda_i - \lambda_j)^2 \exp(-\frac12 \sum \lambda_i^2) d\lambda_1 \dotsc d\lambda_N
\]
\end{proof}

There are various directions in which the Gaussian ensembles can be generalized:
\begin{enumerate}
\item Unitary invariant ensembles:
\[
p_V(\lambda) = c_N \prod_{i < j} (\lambda_i - \lambda_j)^2 \exp(-\sum_{i=1}^N V(\lambda_i)) d^{(N)}\lambda
\]
where $V$ is (typically) a positive, convex, smooth function. This comes from an ensemble of Hermitian matrices with density $\exp(\Tr V(X)) d^{(N)} X$ as opposed to $\exp(-\frac12 \Tr(X^2)) d^{(N)}X$. The entries of these matrices aren't independent, but the distribution is invariant under unitary transformations.

Note that something like $\Tr^2(X)$ (in place of $\Tr(X^2)$) would also be unitarily invariant, but it's not clear that we can generalize the things that we will be saying about GUE to such objects.

\item $\beta$ ensembles:
\[
p^{(\beta)}(\lambda) = C_N \prod_{i < j} \abs{\lambda_i - \lambda_j}^\beta \exp(-\frac\beta4 \sum \lambda_i^2) d^{(N)}\lambda
\]
The cases $\beta=1$ (GOE) and $\beta=4$ (GSE) are reasonably similar to $\beta=2$ (GUE), but other values of $\beta$ are not. For a general $\beta>1$, it is possible to realize this distribution as the distribution of eigenvalues of certain random tridiagonal matrices with independent entries. (See the seminal paper by Dumitriu and Edelman \cite{dumitriu_edelman02}, and also papers by Edelman and Sutton [\href{http://arxiv.org/abs/math-ph/0607038}{arxiv:0607038}], Ramirez, Valk\'{o}, and Vir\'{a}g [\href{http://arxiv.org/abs/math/0607331v4}{arxiv:0607331v4}], and Valk\'{o}  and Vir\'{a}g [\href{http://arxiv.org/abs/0712.2000v4}{arxiv:0712.2000v4}] for more recent developments.) The tridiagonal matrices come from a standard algorithm for tridiagonalising Hermitian matrices; the fact that their entries end up being independent is really quite surprising!
\end{enumerate}

There is an interesting relation to statistical mechanics: we can write
\[
p^{(\beta)}(\lambda) = \frac1{Z_N} \exp(-\beta U(\lambda)), \quad U(x) = -\sum_{i<j} \log\abs{x_i-x_j} + \frac14 \sum x_i^2
\]
This looks like the Boltzmann distribution for a system of $N$ interacting electrically charged particles in the plane, which are confined to lie on the real axis and are placed in external potential $ x^2$. In particular, this gives intuition for eigenvalues ``repelling'' each other.

Let us now derive the results for GUE, which is the simplest case of the above ensembles. Recall the Vandermonde determinant:
\[
\prod_{i<j} (x_i - x_j)^2 =
\begin{vmatrix}
1 & \dotsc & 1\\
x_1 & \dotsc & x_N\\
\vdots & \ddots & \vdots\\
x_1^{N-1} & \dotsc & x_N^{N-1}
\end{vmatrix}^2
=
\begin{vmatrix}
p_0(x_1) & \dotsc & p_0(x_N)\\
p_1(x_1) & \dotsc & p_1(x_N)\\
\vdots & \ddots & \vdots\\
p_{N-1}(x_1) & \dotsc & p_{N-1}(x_N)
\end{vmatrix}^2
\]
where $\abs{\cdot}$ denotes the determinant of a matrix. Here, $p_j$ are monic polynomials of degree $j$; we will use (up to a factor) the Hermite polynomials, which are orthogonal with respect to a convenient measure.

In particular, we can write the density of eigenvalues as
\[
p(x_1,\dotsc,x_N) = c_N
\begin{vmatrix}
p_0(x_1) e^{-\frac14 x_1^2} & \dotsc & p_0(x_N) e^{-\frac14 x_N^2}\\
p_1(x_1) e^{-\frac14 x_1^2} & \dotsc & p_1(x_N) e^{-\frac14 x_N^2}\\
\vdots & \ddots & \vdots\\
p_{N-1}(x_1) e^{-\frac14 x_1^2} & \dotsc & p_{N-1}(x_N) e^{-\frac14 x_N^2}
\end{vmatrix}^2
= \tilde c_N \left(\left.\det (A_{ij})\right|_{i,j=1}^N \right)^2 = \tilde c_N \det A^T A
\]
where the entries of $A$ are given by $A_{ij} = a_{i-1} P_{i-1}(x_j) e^{-\frac14 x_j^2} \equiv \phi_{i-1}(x_j)$, for $\phi_i$ the Hermite functions. $a_i P_i(x)$ are the normalized Hermite polynomials, i.e.
\[
\int_\BR a_i P_i(x) a_j P_j(x) e^{-x^2/2} dx = \delta_{ij}
\]
(i.e. they are orthonormal, but with respect to the measure $e^{-x^2/2} dx$).

Let us now compute the entries of $A^t A$:
\[
(A^T A)_{ij} = \sum_{k=1}^N A_{ki} A_{kj} = \sum_{k=0}^{N-1} \phi_k(x_i) \phi_k(x_j).
\]
Let
\[
K_N(x,y) = \sum_{k=0}^{N-1} \phi_k(x) \phi_k(y), \quad \text{the Christoffel-Darboux kernel},
\]
then we have
\[
p_N(x_1,\dotsc,x_N) = \tilde c_N \det(K_N(x_i,x_j)).
\]

So far we haven't used any property of the Hermite polynomials. Suppose, however, that we want to compute marginals of this probability distribution, for example, the distribution of $x_1$ alone. In that case, we would need to integrate out the dependence on all the other $x_i$.

The orthonormality of the (normalized) Hermite polynomials implies that if the kernel $K_N(x,y)$ is defined using these polynomials, then it satisfies the following condition:
\begin{equation}
\label{kernel_formula1}
\int_{\BR} K_N(x,y) K_N(y,z) dy = K_N(x,z).
\end{equation}

\begin{remark}
If $K_N$ is considered as an operator on functions given by
\[ 
(K_N f)(x) = \int K_N(x,y) f(y) dy,
\]
then $K_N$ is the orthogonal projection onto the space spanned by the first $N$ Hermite functions. The identity then simply says that the orthogonal projection operator is an idempotent, that is, it squares to itself.
\end{remark}
Let
\[
J_N = (J_{ij})_{1 \leq i,j \leq N} = (K(x_i,x_j))_{1 \leq i,j \leq N}
\]
for some kernel $K$ (not necessarily the Christoffel-Darboux one).  Then we have the following result.
\begin{theorem}[Mehta]
Assume that $K$ satisfies 
\begin{equation}
\label{kernel_formula2}
\int_{\BR} K(x,y) K(y,z) dy = K(x,z).
\end{equation}
Then
\begin{equation}
\label{Mehta_formula}
\int_\BR \det(J_N) d\mu(x_N) = (r-N+1) \det(J_{N-1}),
\end{equation}
where $r = \int K(x,x) d\mu(x)$.
\end{theorem}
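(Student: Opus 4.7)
The plan is to expand $\det(J_N)$ as a signed sum over permutations and then integrate term by term, using the reproducing property \eqref{kernel_formula2} to collapse each factor $K(x_j,x_N)K(x_N,x_k)$ into a single kernel value. Writing
\[
\det(J_N) = \sum_{\sigma\in S_N} \sgn(\sigma)\prod_{i=1}^N K(x_i, x_{\sigma(i)}),
\]
I would split the sum according to whether $\sigma$ fixes $N$ or not. For $\sigma$ with $\sigma(N)=N$, the only factor involving $x_N$ is $K(x_N,x_N)$, which integrates to $r$; the remaining sum over fixed permutations of $\{1,\dots,N-1\}$ reconstructs $\det(J_{N-1})$. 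So this part contributes $r\det(J_{N-1})$.

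For permutations with $\sigma(N)=k\ne N$, let $j=\sigma^{-1}(N)$. The factors involving $x_N$ are exactly $K(x_j,x_N)K(x_N,x_k)$, and by the reproducing property \eqref{kernel_formula2} their integral against $d\mu(x_N)$ is $K(x_j,x_k)$. The key combinatorial step is to reparameterize: each such $\sigma$ corresponds bijectively to a pair $(\sigma',j)$ with $\sigma'\in S_{N-1}$ and $j\in\{1,\dots,N-1\}$, obtained by deleting $N$ from its cycle in $\sigma$. Since the cycle containing $N$ shortens by one element, $\sgn(\sigma)=-\sgn(\sigma')$. After integration, each pair $(\sigma',j)$ contributes $-\sgn(\sigma')\prod_{i=1}^{N-1}K(x_i,x_{\sigma'(i)})$, and summing over the $N-1$ possible values of $j$ gives $-(N-1)\det(J_{N-1})$.

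Adding the two cases yields $\int \det(J_N)\,d\mu(x_N) = (r-N+1)\det(J_{N-1})$, which is \eqref{Mehta_formula}. The main thing to be careful about is the sign bookkeeping in the cycle-deletion bijection: one must verify that removing $N$ from a cycle of length $\ell$ changes the sign from $(-1)^{\ell-1}$ to $(-1)^{\ell-2}$, accounting for the minus sign in the second case. Once that sign is established and the bijection between $\sigma\in S_N$ with $\sigma(N)\ne N$ and pairs $(\sigma',j)$ is confirmed, the rest is bookkeeping and the identity follows directly.
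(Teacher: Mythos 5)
Your proof is correct and follows essentially the same approach as the paper: expand the determinant as a signed sum over permutations, separate the cases $\sigma(N)=N$ (giving $r\det J_{N-1}$) and $\sigma(N)\ne N$, use the reproducing kernel identity to collapse $K(x_j,x_N)K(x_N,x_k)$ to $K(x_j,x_k)$, and track the sign change via the transposition/cycle-deletion bijection onto $S_{N-1}$. The only cosmetic difference is that the paper organizes the second sum by the value $k=\sigma(N)$ (one copy of $S_{N-1}$ for each of the $N-1$ choices of $k$), whereas you organize it by $j=\sigma^{-1}(N)$; both give $-(N-1)\det J_{N-1}$.
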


\begin{proof}
Expand the left-hand side:
\begin{align*}
\int_\BR \det(J_N) d\mu(x_N) &= \int_\BR \sum_{\sigma \in S_N} \sgn(\sigma) K(x_1,x_{\sigma(1)})\dotsc K(x_N,x_{\sigma(n)}) d\mu(x_N)\\
&= \int_\BR \sum_{k=1}^N \sum_{\sigma: \sigma(N) = k} \sgn(\sigma) K(x_1,x_{\sigma(1)})\dotsc K(x_N,x_k) d\mu(x_N)\\
\end{align*}
We now split the sum into two cases: $k=N$ and $k \neq N$.

When $k=N$, we straightforwardly get $r \det J_{N-1}$, since $\sigma$ is essentially running over all permutations in $S_{N-1}$, and the sign of $\sigma$ as a permutation in $S_N$ and in $S_{N-1}$ is the same.

When $k < N$, let $j = \sigma^{-1}(N)$, and let $\hat \sigma \in S_{N-1}$ be given by
\[
\hat \sigma(i) = \begin{cases}\sigma(i), & i \neq j\\
k, & i=j
\end{cases}
\]
It's easy to check that the map $\{\sigma \in S_N: \sigma(N) = k\} \to S_{N-1}$, $\sigma \mapsto \hat\sigma$, is a bijection; and moreover, $\sgn(\hat\sigma) = -\sgn(\sigma)$ because essentially they differ by a transposition $(kN)$. We now write
\begin{multline*}
\int_\BR \sum_{\sigma: \sigma(N) = k} \sgn(\sigma) K(x_1,x_{\sigma(1)})\dotsc K(x_N,x_k) d\mu(x_N)\\
= \int_\BR \sum_{\sigma: \sigma(N) = k} \sgn(\sigma) K(x_1,x_{\sigma(1)})\dotsc K(x_{N-1},x_{\sigma(N-1)}) K(x_j, x_N) K(x_N,x_k) d\mu(x_N)\\
= \sum_{\hat \sigma \in S_{N-1}} -\sgn(\hat \sigma) K(x_1,x_{\hat\sigma(1)})\dotsc K(x_{N-1}, x_{\hat\sigma(N-1)})\\
=-\det J_{N-1}
\end{multline*}
where the second-to-last line uses condition (\ref{kernel_formula2}). Since we get the same answer for all $k=1,\dotsc,N-1$, the statement of the theorem follows.
\end{proof}

\begin{corollary} For the density of eigenvalues of the GUE ensemble we have
\[
\int p_N(x_1,\dotsc,x_N)dx_{k+1}\dotsc dx_N = \frac{(N-k)!}{N!} \det(K_N(x_i,x_j)_{1 \leq i,j \leq k})
\]
\end{corollary}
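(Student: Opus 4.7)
The plan is to apply Mehta's theorem iteratively with $\mu$ equal to Lebesgue measure on $\BR$, using the Christoffel-Darboux kernel $K_N$. Recall from the derivation above that $p_N(x_1,\dotsc,x_N) = \tilde c_N \det(K_N(x_i,x_j))_{1 \leq i,j \leq N}$ and that $K_N$ satisfies the reproducing property (\ref{kernel_formula1}), so Mehta's theorem applies at every step. The first task is to identify the constant $r$ appearing in Mehta's formula: since the Hermite functions $\phi_k$ are orthonormal with respect to Lebesgue measure,
\[
r = \int_{\BR} K_N(x,x)\, dx = \sum_{k=0}^{N-1} \int_{\BR} \phi_k(x)^2\, dx = N.
\]

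Now I would integrate out the variables $x_N, x_{N-1}, \dotsc, x_{k+1}$ one at a time. Applying Mehta's theorem (\ref{Mehta_formula}) to $\det(K_N(x_i,x_j))_{1 \leq i,j \leq N}$ and integrating over $x_N$ produces a factor $(r - N + 1) = 1$ and reduces the determinant to size $N-1$; integrating next over $x_{N-1}$ produces the factor $(r - (N-1) + 1) = 2$; and so on. After $N-k$ integrations, the accumulated factor is
\[
1 \cdot 2 \cdot 3 \cdots (N-k) = (N-k)!,
\]
leaving $\det(K_N(x_i,x_j))_{1 \leq i,j \leq k}$.

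It remains to pin down the normalization constant $\tilde c_N$. Setting $k=0$, the right-hand side must integrate to $1$ (since $p_N$ is a probability density), and the ``empty'' determinant equals $1$, so I would read off $\tilde c_N \cdot N! = 1$, hence $\tilde c_N = 1/N!$. Combining, I obtain
\[
\int_{\BR^{N-k}} p_N(x_1,\dotsc,x_N)\, dx_{k+1}\cdots dx_N = \frac{(N-k)!}{N!}\det\bigl(K_N(x_i,x_j)\bigr)_{1 \leq i,j \leq k},
\]
as required.

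There is no real obstacle here beyond bookkeeping; the only subtle step is checking $r = N$, which relies on the fact that the kernel is built from the $L^2(\BR, dx)$-orthonormal Hermite functions $\phi_k(x) = a_k P_k(x) e^{-x^2/4}$ rather than from the Hermite polynomials themselves (which are orthonormal with respect to the weight $e^{-x^2/2}\,dx$). Everything else is a clean induction on the number of integrated variables using Mehta's identity.
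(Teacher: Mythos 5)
Your proposal is correct and follows essentially the same route as the paper: iterate Mehta's theorem to peel off one variable at a time (noting $r=N$ by orthonormality of the Hermite functions, so the factors accumulate as $1\cdot 2\cdots(N-k)$), and pin down $\tilde c_N = 1/N!$ by integrating out all $N$ variables and using that $p_N$ is a probability density.
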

\begin{proof} We have
\[
p_N(x_1,\dotsc,x_N) = \tilde c_N \det(K_N(x_i,x_j)).
\]
Hence,
\[
\int_\BR p_N(x_1,\dotsc,x_N) dx_N = \tilde c_N \det(K_N(x_i,x_j)_{1 \leq i,j \leq N-1})
\]
(since $r=N$ for our kernel $k_N$ by the orthonormality of Hermite functions);
\[
\int_\BR\int_\BR p_N(x_1,\dotsc,x_N) dx_{N-1} dx_N =  2 \tilde c_N \det(K_N(x_i,x_j)_{1 \leq i,j \leq N-2})
\]
(we still have $r=N$, since the kernel is the same, but we've reduced the size of the matrices from $N$ to $N-1$, so the factor in the previous theorem is now equal to 2). Continuing on in this manner,
\[
\int p_N(x_1,\dotsc,x_N)dx_1\dotsc dx_N = N! \tilde c_N,
\]
implying that $\tilde c_N = 1/N!$; and in general, 
\[
\int p_N(x_1,\dotsc,x_N)dx_{k+1}\dotsc dx_N = \frac{(N-k)!}{N!} \det(K_N(x_i,x_j)_{1 \leq i,j \leq k}).
\]
\end{proof}

\begin{definition}
A \emph{correlation function} is
\[
R_k(x_1,\dotsc,x_k) = \frac{N!}{(N-k)!} \int p_N(x_1,\dotsc,x_N) dx_{k+1},\dotsc,dx_N.
\]
\end{definition}
Thus, the density of eigenvalues of the GUE has correlation function(s) $R_k = \det(K_N(x_i,x_j)_{1 \leq i,j \leq k})$.

Let's expand a few of these to check that they are actually computable:
\[
R_1(x) = K_N(x,x) = \sum_{k=0}^{N-1} \phi_k(x,x)
\]
and
\[
R_2(x) = \begin{vmatrix}
K_N(x,x) & K_N(x,y)\\
K_N(x,y) & K_N(x,x)
\end{vmatrix}
\]
(where we've used the symmetry of the kernel).

What is the meaning of correlation functions? Let $B$ be a Borel set. Then,
\begin{align*}
\int_B R_1(x) dx &= \BE[\#\{\text{eigenvalues in $B$}\}]\\
\iint \limits_{B \times B} R_2(x,y) dx dy &= \BE[\#\{\text{ordered pairs of eigenvalues in $B$}\}]
\end{align*}

If eigenvalues are considered as random points then their collection is a random point process. The previous equations say that the intensities of this process are given by the correlation functions $R_k$. Point processes for which intensities (defined in terms of expectations of the number of points, pairs of points, etc.) are given by $\det K$ for some kernel $K$ are called \emph{determinantal point processes}. There exist explicit conditions on a kernel $K$ that establish when determinants constructed from $K$ can generate the intensities of a valid point process. We have effectively shown that any $K$ that can be written as $K(x,y) = \sum f_k(x) f_k(y)$, where $f_k$ are orthonormal functions with respect to some measure, gives rise to a determinantal point process. For more, see Soshnikov \cite{soshnikov00b} or Peres \cite{hkpv06}.

\subsection{Other statistics of GUE}
In addition to the correlation functions (intensities), we will now look at
\[
A_m(I) = \BP(\text{exactly $m$ eigenvalues in $I$})
\]
for some interval $I$. For example,
\[
A_0(I) = \BE \prod_{i=1}^N (1-\one_I (\lambda_i)).
\]

Define
\[
F_I(t) = \BE \prod_{i=1}^N (1-t \one_I(\lambda_i))
\]
Then it is easy to see $F_I(1) = A_0(I)$, $-F_I'(1) = A_1(I)$, and more generally
\[
A_m(I) = \frac{(-1)^m}{m!} F_I^{(m)}(1).
\]
Therefore, we now focus on computing $F_I$.

Expanding the product,
\begin{align*}
F_I(t) &= 1 - t\BE \sum_i \one_I(\lambda_i) + t^2 \BE \sum_{i \leq j} \one_I(\lambda_i) \one_I(\lambda_j) - \dotsc\\
&= 1 - t \int_I R(x_1) dx_1 + \frac{t^2}{2} \int_{I \times I} R_2(x_1,x_2) dx_1 dx_2 - \frac{t^3}{3!} \int_{I^3} R_3(x_1,x_2,x_3) dx_1 dx_2 dx_3 + \dotsc\\
&= 1 + \sum_{k=1}^\infty \frac{(-t)^k}{k!} \int_{I^k} \det(K_N(x_i,x_j)_{1 \leq i,j \leq k})\\
&\equiv \det(I-t K_N(x,y))
\end{align*}
where we take the last equality to be the definition of the \emph{Fredholm determinant} of the operator with kernel $K_N(x,y)$ acting on $L^2(I)$.

\begin{remark}
An aside on operators and Fredholm determinants:

An operator with kernel $K(x,y)$ acts on $f$ as $(Kf)(x) = \int K(x,y) f(y) dy$. The formula above restricts to the usual notion of determinant when $K$ is a matrix: $(Kv)_x=\sum_{y=1}^N K_{xy} v_y$. (This is an exercise).

When $K$ is a trace class operator (i.e. $Tr(K) = \int K(x,x) dx$ exists and is finite), we have other formulas for the Fredholm determinant.
\begin{enumerate}
\item
\[
\det(I-K) = \exp(-\sum_{m=1}^\infty \frac1m \Tr(K^m))
\]
where $K^m$ denotes the result of applying $K$ $m$ times (i.e. convolution, not product), and the trace of a kernel is given by $Tr(K) = \int K(x,x) dx$. Note that if $K=\lambda$, i.e. the operator acts as multiplication by $\lambda$ on $\BR$, we have $\exp(-\frac1m \lambda^m) = \exp(\log(1-\lambda)) = 1-\lambda$ as expected.
\item
\[
\det(I-K) = \prod(1-\lambda_j)
\]
the product being taken over all the eigenvalues of $K$ counted with algebraic multiplicity.
\end{enumerate}

We will not prove the equivalence between these definitions; it's a hard theorem in analysis. For more information on the Fredholm determinants, a good source is Section XIII.17 in the 4th volume of Reed and Simon \cite{reed_simon78}. In particular, see formula (188) on p. 323 and the classical formula on p.362.)
\end{remark}

\subsection{Asymptotics}
What is the scaling limit of $K_N(x,y)$ as $N \to \infty$? That is, if we rescale and center appropriately, what sort of functional dependence will we get? Recall
\[
K_N(x,y) = \sum_{j=1}^N \phi_j(x) \phi_j(y)
\]
where $\phi_j$ are (up to a factor) the Hermite functions. For convenience, let us use the functions orthonormal with respect to weight $e^{-t^2}$ instead of $e^{-t^2 /2}$. This corresponds to dividing the GUE matrix by $\sqrt{2}$ and has the advantage that $\phi_j(x)$ can be written in terms of the standard Hermite polynomials: 
\begin{equation*}
\phi_j(x) = H_n(x)e^{-t^2/2}\frac{1}{\pi^{1/4} 2^{N/2} \sqrt{N!}}.
\end{equation*}
 The Christoffel-Darboux formula for orthogonal polynomials gives
\[
\sum_{j=1}^N \phi_j(x) \phi_j(y) = \sqrt{N/2} \frac{\phi_{N-1}(x) \phi_N(y) - \phi_N(x) \phi_{N-1}(y)}{x-y}
\]
The asymptotics for individual Hermite functions are well-known (see Chapter VIII in Szeg\"o \cite{szego67}):
\begin{align*}
&\lim (-1)^m m^{1/4} \phi_{2m} \left(\frac{\pi \xi}{\sqrt{2m}} \right) = \frac{1}{\sqrt{\pi}} \cos(\pi \xi)\\
&\lim (-1)^m m^{1/4} \phi_{2m+1} \left(\frac{\pi \eta}{\sqrt{2m}} \right) = \frac{1}{\sqrt{\pi}} \sin(\pi \eta).
\end{align*}
Therefore, we conclude
\[
\lim_{N \to \infty} \frac{\pi}{\sqrt{2N}} K_N\left(\frac{\pi\xi}{\sqrt{2N}}, \frac{\pi\eta}{\sqrt{2N}} \right) = \frac{\sin(\pi(\xi-\eta))}{\pi(\xi-\eta)} \equiv K_{\text{bulk}}(\xi,\eta)
\]
(The extra $(\sqrt{2N})^{-1/2}$ is there to compensate for the $\xi-\eta$ in the denominator.) Note that the $N$ eigenvalues of the GUE lie on $[-\sqrt{2N},  \sqrt{2N}]$ because we haven't rescaled the matrix entries. Here we are looking at points that are $O(1/\sqrt{N})$ apart, i.e. under this scaling the distance between adjacent eigenvalues remains roughly constant when $N$ increases. 

If we are interested in the distribution of the number of eigenvalues on an interval, we will be interested in $K_{\text{bulk}}(\xi,\xi)$ (i.e., $\lim_{\eta \to \xi} K_{\text{bulk}}(\xi,\eta)$), which is 1: i.e. under this rescaling the expected number of eigenvalues in an interval is simply its length.

If we want to compute the correlation functions, we will need to evaluate the determinants: e.g.,
\[
\lim_{N \to \infty} \left(\frac{\pi}{\sqrt{2N}} \right)^2 R_2\left(\frac{\pi\xi}{\sqrt{2N}}, \frac{\pi\eta}{\sqrt{2N}} \right) = 1 - \left( \frac{\sin(\pi(\xi-\eta))}{\pi(\xi-\eta)} \right)^2 \equiv R_{2,\text{bulk}}(\xi,\eta).
\]
For example,
\[
\sum_{i,j} f(\sqrt{N} \lambda_i, \sqrt{N} \lambda_j) \to \int_{\BR^2} f(\xi,\eta) R_{2,\text{bulk}}(\xi,\eta)d\xi d\eta
\]
provided $f \to 0$ sufficiently quickly at infinity (e.g., if $f$ has compact support).

\begin{remark}
$K_{\text{bulk}}$ acts particularly simply on Fourier transforms: $\hat{K_{\text{bulk}}f} = \one_{[-\pi,\pi]} \hat f$. It's not entirely surprising that we should get a projection operator in the limit (each $K_N$ was a projection operator), although it's not clear why it's such a simple projection operator.
\end{remark}

\subsection{Scaling at the edge of the spectrum}
The calculations we did above applied to the eigenvalues near 0. We might also be interested in the distribution of the eigenvalues near the edge of the spectrum, e.g. in the distribution of the largest eigenvalue.

The coordinate change we will be using is $x = \sqrt{2N} + \frac{t}{\sqrt{2}N^{1/6}}$ (where other books might use $ N^{1/6}$ or $2 N^{1/6}$ instead, depending on how the matrix entries were scaled to begin with). In this case, the asymptotics for the Hermite functions are
\[
\phi_N(x) = \pi^{1/4} 2^{N/2+1/4} (N!)^{1/2} N^{-1/12} \left( \Ai(t) + O(N^{-2/3}) \right),
\]
where $\Ai(t)$ is the \emph{Airy function}: it satisfies $y'' = ty$ and $y(+\infty) = 0$ (plus some normalization constraint, since this is a second-order ODE). It can also be defined as a contour integral
\[
\Ai(t) = \frac{1}{2\pi i} \int \exp(-tu + u^3/3) du
\]
where the integral is over two rays $r (e^{-\pi/3})$ from $\infty$ to $0$, then $r e^{\pi/3}$ from $0$ to $\infty$. (This integral naturally arises when the asymptotics of Hermite polynomials are derived through the steepest descent formula. The Airy function can be also defined as a particular case of Bessel functions, or as a real integral of $\cos(tu + u^3/3)$. Note, however, that this integral is difficult to compute since it has a singularity as $u \to \infty$.)

\begin{figure}[htbp]
\includegraphics[width=8cm]{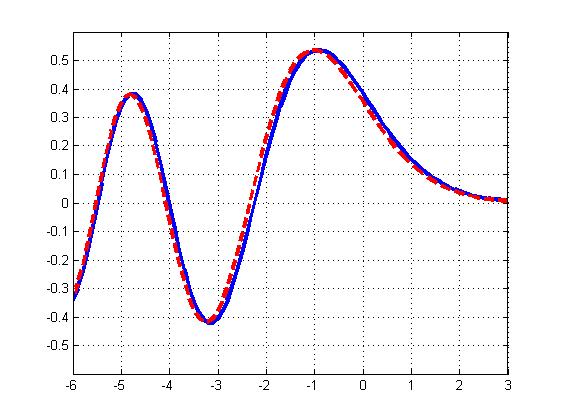} \label{airy_function}
\caption{The scaled Hermite function (solid line, N=100) and the Airy function (dashed line)}
\end{figure}

\begin{theorem}[Forrester 1993]
Let $x = \sqrt{2N} + \xi 2^{-1/2}N^{-1/6}$, $y = \sqrt{2N} + \eta 2^{-1/2} N^{-1/6}$, then
\[
\lim_{N \to \infty} \frac{1}{2^{1/2}N^{1/6}} K_N(x,y) = \frac{\Ai(\xi) \Ai'(\eta) - \Ai(\eta) \Ai'(\xi)}{\xi - \eta} \equiv K_{\text{edge}}(\xi,\eta).
\]
\end{theorem}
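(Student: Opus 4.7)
The plan is to adapt the Christoffel--Darboux identity already used in the bulk case and combine it with the stated Plancherel--Rotach expansion for $\phi_N$. The natural starting point is
\[
K_N(x,y) \;=\; \sqrt{N/2}\;\frac{\phi_{N-1}(x)\phi_N(y)-\phi_N(x)\phi_{N-1}(y)}{x-y},
\]
which unfortunately involves both $\phi_N$ and $\phi_{N-1}$, while the edge asymptotic has only been stated for $\phi_N$. My first step would be to eliminate $\phi_{N-1}$ using the Hermite ladder relation $\phi_N'(x)=\sqrt{2N}\,\phi_{N-1}(x)-x\phi_N(x)$ (an immediate consequence of $H_N'=2N H_{N-1}$ together with the product rule applied to $\phi_N(x)=H_N(x)e^{-x^2/2}/\|\cdot\|$). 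Substituting and simplifying, the $(x-y)\phi_N(x)\phi_N(y)/(x-y)$ piece separates off and one obtains the clean identity
\[
K_N(x,y) \;=\; \tfrac{1}{2}\,\frac{\phi_N'(x)\phi_N(y)-\phi_N(x)\phi_N'(y)}{x-y}\;+\;\tfrac{1}{2}\,\phi_N(x)\phi_N(y),
\]
so that the entire problem is reduced to edge asymptotics of $\phi_N$ and of its derivative $\phi_N'$.

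The second step is to substitute $x=\sqrt{2N}+\xi\,2^{-1/2}N^{-1/6}$ and $y=\sqrt{2N}+\eta\,2^{-1/2}N^{-1/6}$ and insert the Plancherel--Rotach expansion, which (with the correct normalization of the Hermite functions) has the form
\[
\phi_N\bigl(\sqrt{2N}+t\,2^{-1/2}N^{-1/6}\bigr)\;=\;c\,N^{-1/12}\bigl(\Ai(t)+O(N^{-2/3})\bigr)
\]
for an explicit constant $c$. Since $dt/dx=\sqrt{2}\,N^{1/6}$, formal differentiation suggests $\phi_N'(x)\sim c\sqrt{2}\,N^{1/12}\Ai'(t)$. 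Multiplying out, the bracketed derivative term in the identity has magnitude $N^{1/6}$: the $N^{-1/12}\cdot N^{1/12}=1$ from $\phi_N\phi_N'$ combines with the factor $(x-y)^{-1}=\sqrt{2}N^{1/6}(\xi-\eta)^{-1}$. The second bracket $\phi_N(x)\phi_N(y)$ is of size $N^{-1/6}$ and therefore disappears after dividing by $\sqrt{2}N^{1/6}$. Tracking the constants, what survives is a constant multiple of $\bigl(\Ai(\xi)\Ai'(\eta)-\Ai(\eta)\Ai'(\xi)\bigr)/(\xi-\eta)$, which matches $K_{\text{edge}}(\xi,\eta)$.

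The principal obstacle is the rigorous justification of the asymptotic for $\phi_N'$: pointwise convergence of $\phi_N$ to $\Ai$ after rescaling does not, by itself, give convergence of $\phi_N'$ to $\Ai'$. The clean way to handle this is either (i) to upgrade Plancherel--Rotach to uniform-on-compacts control of the error, which one obtains from the steepest-descent proof applied to the contour-integral representation of $\phi_N$ (the same representation used in the definition of $\Ai$), or (ii) to use the ladder relation in reverse, deriving the $\phi_N'$ asymptotic from a Plancherel--Rotach expansion for $\phi_{N-1}$ at the point $\sqrt{2N}+\xi 2^{-1/2}N^{-1/6}$. Approach (ii) requires a mildly delicate computation, because the natural edge coordinate for $\phi_{N-1}$ is centered at $\sqrt{2(N-1)}$ rather than $\sqrt{2N}$, so the Airy argument shifts by $O(N^{-1/3})$ that must be expanded carefully against the $O(N^{-2/3})$ error. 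A minor technical point is the indeterminate form at $\xi=\eta$: both sides extend to entire functions of $(\xi,\eta)$ (the Airy ODE $y''=ty$ gives $K_{\text{edge}}(\xi,\xi)=\Ai'(\xi)^2-\xi\,\Ai(\xi)^2$), so it is enough to prove convergence for $\xi\neq\eta$ and invoke analyticity.
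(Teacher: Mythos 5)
The paper does not actually prove this theorem: it states the result, attributes it to Forrester, and the subsequent steepest-descent section carries out the Plancherel--Rotach computation only in the bulk ($\abs{y}<1$), remarking without details that the edge case $y=1$ gives a cubic saddle and hence the Airy function. So your proposal is supplying an argument the paper leaves out, not reproducing one. The strategy you outline is sound and standard. The ladder relation $\phi_N'(x)=\sqrt{2N}\,\phi_{N-1}(x)-x\phi_N(x)$ does follow from $H_N'=2NH_{N-1}$ and the normalization $c_N/c_{N-1}=(2N)^{-1/2}$, and substituting it into Christoffel--Darboux gives exactly the decomposition you state, with the derivative block carrying the $O(N^{1/6})$ contribution and the $\tfrac12\phi_N(x)\phi_N(y)$ term of size $O(N^{-1/6})$ dying after the $(\sqrt{2}N^{1/6})^{-1}$ normalization. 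Using the paper's constant $2^{1/4}$ in $\phi_N\approx 2^{1/4}N^{-1/12}\Ai(t)$, the surviving prefactor is in fact exactly $1$, so you can strengthen ``constant multiple'' to equality. (One small inherited issue: the paper's Christoffel--Darboux formula has the numerator with the wrong sign --- it should read $\phi_N(x)\phi_{N-1}(y)-\phi_{N-1}(x)\phi_N(y)$ --- and you copied that sign; with the corrected sign everything lands on $K_{\text{edge}}$ with the right sign.)

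You have also correctly put your finger on the only genuine analytic issue: pointwise convergence of the rescaled $\phi_N$ to $\Ai$ does not by itself control $\phi_N'$. Of your two remedies, (ii) is the one usually carried out, and it is worth noting it does not even require passing through $\phi_N'$: apply Plancherel--Rotach directly to $\phi_{N-1}$ in the original Christoffel--Darboux formula. Its natural edge variable is centered at $\sqrt{2(N-1)}$, which differs from $\sqrt{2N}$ by $\approx (2N)^{-1/2}$; in units of the edge scale $2^{-1/2}N^{-1/6}$ this is a shift of exactly order $N^{-1/3}$ in the Airy argument. Taylor-expanding $\Ai(\xi+N^{-1/3})=\Ai(\xi)+N^{-1/3}\Ai'(\xi)+O(N^{-2/3})$ then produces the $\Ai'$ terms after the leading-order cancellation in the numerator, and the $O(N^{-2/3})$ error in Plancherel--Rotach is subleading. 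This is morally identical to your ladder-relation route but avoids having to justify differentiating an asymptotic expansion. Either way, what one actually needs is uniformity of the Plancherel--Rotach error on compacts in $t$, which the steepest-descent derivation does provide; it would be worth stating that explicitly rather than leaving it implicit. Your remark about removing the apparent singularity at $\xi=\eta$ by analyticity (using $K_{\text{edge}}(\xi,\xi)=\Ai'(\xi)^2-\xi\Ai(\xi)^2$ from the Airy ODE) is correct and matches the formula the paper records.
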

This result originally appeared in \cite{forrester93} Here it is no longer obvious what $K_{\text{edge}}(\xi,\xi)$ is, but after massaging the differential equation for $\Ai$ we conclude
\[
K_{\text{edge}}(\xi,\xi) = -\xi (\Ai(\xi))^2 + (\Ai'(\xi))^2.
\]

\begin{figure}[htbp]
\includegraphics[width=8cm]{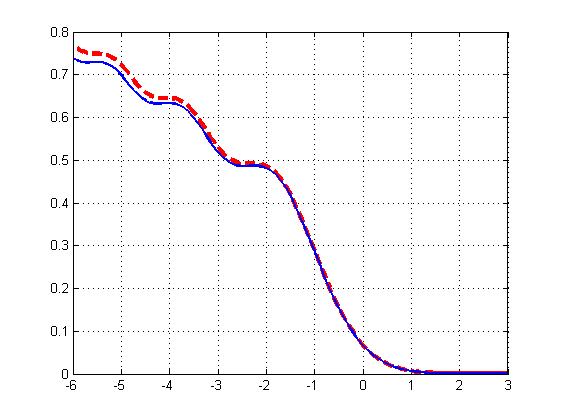} \label{airy_kernel}
\caption{The scaled Christoffel-Darboux kernel at the edge (solid line, N=100) and the Airy kernel (dashed line)}
\end{figure}

We could now derive an expression for the correlation functions, but it won't be anything nice.

People often care more about the Wishart ensemble, i.e., the matrices of the form $X^t X$ where $X$ are complex or real Gaussian matrices. In that case, the story is similar but we use Laguerre polynomials instead of Hermite ones.

Suppose that we want to know the distribution of the largest eigenvalue $\lambda_{\max}$. Then
\begin{multline*}
\BP(\lambda_{\max} < 2\sqrt{N} + \xi N^{-1/6}) = \BP(\text{no eigenvalues in }[2\sqrt{N} + \xi N^{-1/6}, \infty))\\
= A_0[2\sqrt{N} + \xi N^{-1/6}, \infty) = \det (I - K_N)
\end{multline*}
(the Fredholm determinant), where $K_N$ is a kernel operator on $L^2[2\sqrt{N} + \xi N^{-1/6}, \infty)$. It is plausible (although we won't go into the technical details) that this should converge, after the appropriate change of variables, to
\[
\det(I-K_{\text{edge}})
\]
where $K_{\text{edge}}$ is the operator with kernel $K_{\text{edge}}(x,y)$ acting on $L^2[\xi,\infty)$. This sort of thing can be tabulated, and is called the Tracy-Widom distribution for $\beta = 2$. It turns out (Bornemann, \href{http://arxiv.org/pdf/0904.1581v5.pdf}{arxiv:0904.1581}) that for numerical computations the integral operator can be approximated by a sum, which makes the computation manageable. An alternative method uses differential equations developed by Tracy and Widom (\href{http://arxiv.org/pdf/hep-th/9211141.pdf}{arxiv:hep-th/9211141}), which are related to Painlev\'e differential equations.

\subsection{Steepest descent method for asymptotics of Hermite polynomials}
The asymptotics for Hermite, Laguerre and other classical orthogonal polynomials are derived using the steepest descent method.
Morally, steepest descent is a way to compute the asymptotics for contour integrals of the form
\[
\int_C e^{tf(z)}dz
\]
as $t \to \infty$. The idea is to change the contour so that it passes through the critical points where $f'(z) = 0$; with luck, the portions of the contour near these points give the largest contribution to the integral.

More precisely: Let $z_0$ be s.t. $f'(z_0) = 0$, and write
\[
f(z) = f(z_0) - \frac12 A(e^{i\theta/2}(z-z_0))^2 + \dotsc
\]
Then necessarily $z_0$ is a saddle point for $\abs{f(z)}$. Change the contour to go along the line of steepest descent through $z_0$. It is then plausible that the contour integral away from $z_0$ is negligibly small.

Formally, we parametrize $u = e^{i\theta/2}(z-z_0)$, then
\[
\int_C e^{tf(z)}dz \approx e^{tf(z_0)} e^{-i\theta/2}\int_C e^{-tA/2 u^2} du \approx e^{tf(z_0)} e^{-i\theta/2} \sqrt{\frac{2\pi}{tA}}
\]
for large $t$, because this is essentially the Gaussian integral. Recalling that $-Ae^{i\theta}$ is the second derivative, we have
\[
\int_C e^{tf(z)}dz \approx e^{tf(z_0)} \sqrt{\frac{2\pi}{-t f''(z_0)}}.
\]
This approximation will hold if the original contour integral can be approximated by integrals in the neighborhood of critical points.

The steepest descent calculations can be carried through identically for asymptotics of $\int e^{tf(z)}h(z) dz$, where $h$ is sufficiently smooth near the critical points; we will simply pick up an extra factor:
\[
\int_C h(z) e^{tf(z)}dz \approx h(z_0) e^{tf(z_0)} \sqrt{\frac{2\pi}{-t f''(z_0)}}.
\]
For a more detailed and rigorous discussion see Chapter 5 in de Bruijn's book \cite{bruijn58}, or Copson \cite{copson67}, or any other text on asymptotic methods.

For Hermite polynomials, we have the recurrence relation
\[
x H_n(x) = \frac12 H_{n+1}(x) + n H_{n-1}(x)
\]
\begin{remark}
All families of orthogonal polynomials have some three-term recurrence of this form (with some function $C(n)$ instead of $n$ above); however, usually it is hard to compute explicitly. It is the fact that we can do it for the classical polynomials (like Hermite, Laguerre, Jacobi) that makes the steepest descent method possible. 
\end{remark}

Let
\[
g(t) = \sum_{k=0}^\infty \frac{H_k(x)}{k!} t^k
\]
be the generating function of the Hermite polynomials, then the above recurrence gives
\[
g'(t) = (2x-2t)g(t) \implies g(t) = e^{2xt - t^2}.
\]
 Since the Hermite polynomials are coefficients of the Taylor expansion of this, we have (by the residue formula)
\[
H_n(x) = \frac{n!}{2\pi i} \int_{\abs{z}=1} \frac{e^{2xz - z^2}}{z^{n+1}}dz = \frac{n!}{2\pi i} \int_{\abs{z}=1} \exp(2xz - z^2 - (n+1)\log z)dz
\]
Let $\tilde z = z / \sqrt{2n}$, $y = x / \sqrt{2n}$, then the above integral becomes
\[
c_n \int_{\abs{\tilde z} = 1} e^{nf(\tilde z)} \frac{d\tilde z}{\tilde z}, \quad f(\tilde z) = 4y\tilde z - 2\tilde z^2 - \log \tilde z
\]
We can change the contour to be $\abs{\tilde z} = 1$, because the only singularity is at the origin. 
We have
\[
f'(\tilde z) = 4y - 4\tilde z  - \frac{1}{\tilde z},
\]
so the critical points satisfy the equation
\[
f'(\tilde z_0) = 0 \text{, that is, } \tilde z_0^2 - y \tilde z_0 + \frac14 = 0
\]
This gives rise to three different asymptotics: $\abs{y} > 1$ (the two real roots case), $\abs{y} < 1$ (the two complex roots case), and $y = \pm1$  (the double root case).

The case of two real roots gives something exponentially small, and we are not much interested in it here. The case of $y = 1$ gives $f''(z) = 0$, so steepest descent as we described it is not applicable. Instead, there is a single cubic singularity, and changing a contour through it appropriately will give the Airy function asymptotics.

If $\abs{y} < 1$ and we have two complex critical points $z_0 = \frac12 (y \pm i \sqrt{1-y^2}) = \frac12 e^{\pm i \theta_c}$, we deform the contour as below:
\begin{figure}[ht]
\input{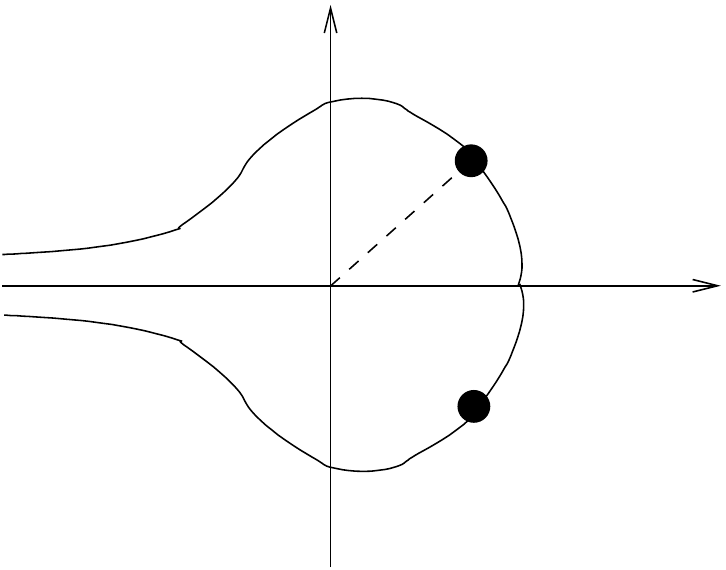_t}
\caption{Deformed contour going through the two critical points}
\end{figure}

The steepest descent formula will have two terms, one for each critical point. At criticality we have
\[
f(z_0) = y^2 \pm iy\sqrt{1-y^2} +\frac12 + \log2 \mp i\theta_c
\]
and therefore,
\[
H_n(\sqrt{2n} y) \approx c_n \left( \frac{e^{ny^2} \exp(in(y \sqrt{1-y^2} - \theta_c))}{\sqrt{-nf''(z^+)}} + \frac{e^{ny^2} \exp(-in(y \sqrt{1-y^2} - \theta_c))}{\sqrt{-nf''(z^-)}} \right)
\]
where $\theta_c = \cos^{-1}(y)$; rewriting, we get
\[
H_n(\sqrt{2n} y) \approx c_n e^{ny^2} \cos\left(n(y\sqrt{1-y^2} - \theta_c) + \theta_0\right)
\]
(where the $\theta_0$ comes from $\sqrt{-n f''(z^\pm)}$).

The asymptotic regime we were considering was $\sqrt{2n} y = \frac{\xi}{\sqrt{2n}}$. This gives $y = \xi/(2n)$ very small. In particular, $e^{ny^2} = e^{\xi^2/4n} = 1+\frac{\xi^2}{4n} + O(n^{-2})$, $y\sqrt{1-y^2} = \frac{\xi}{2n} + O(n^{-2})$, and $\theta_c = \cos^{-1}(y) \approx \frac{\pi}{2} - \frac{xi}{2n} + O(\frac{1}{n^2})$. We can also check that $\theta_0 = O(n^{-1})$. Consequently,
\[
H_n\left(\frac{\xi}{\sqrt{2n}}\right) = c_n e^{\xi^2/4n} \cos(\xi - n\pi/2 + O(n^{-1}))
\]
which is the asymptotic we had before. (The exponential factor upfront is due to the fact that here we were approximating Hermite \emph{polynomials} rather than Hermite \emph{functions}.)

\newpage
\section{Further developments and connections}
\subsection{Asymptotics of invariant ensembles (Deift et al.)}
Suppose the eigenvalue distribution satisfies $p(\lambda_1,\dotsc,\lambda_N) = \Delta(\lambda)^2 \exp(-\sum_{i=1}^N V(\lambda_i))$ for some potential function $V$. Here, $\Delta$ is the Vandermonde determinant we had before. The asymptotics for this distribution would follow from asymptotics for a family of orthogonal polynomials with weight $\exp(-V(x))dx$.

The difficulty is that there is no explicit formula for the coefficients in the three-term recurrence relation for these polynomials.

Deift et al. found a way to find these asymptotics (see Chapter 7 in \cite{deift99} or Section 6.4 in Kuijlaars' review \href{http://arxiv.org/abs/1103.5922}{arxiv:1103.5922}); the solution method is related to a multidimensional version of the Riemann-Hilbert problem.

The Riemann-Hilbert problem is as follows. Consider a contour $\Sigma$ on $\BC$. We would like to find two functions $Y_\pm(z): \BC \to \BC^n$ (for the classical Riemann-Hilbert problem, $n=1$), which are analytic on the two regions into which $\Sigma$ partitions $\BC$, and with the property that $Y_+(z) = V(z) Y_-(z)$ on $\Sigma$, for some matrix $V$. (There will also be some conditions on the behaviour of $Y$ at infinity.) It turns out that for $n>1$ this can be constructed so that one of the components of $Y$ gives the orthogonal polynomials we want. Deift et al. found a way to analyze the asymptotics of these solutions.

\subsection{Dyson Brownian motion}
Suppose that the matrix entries $X_{ij}$ follow independent Ornstein-Uhlenbeck processes.
\begin{remark}
An Ornstein-Uhlenbeck process satisfies the stochastic differential equation (SDE)
\[
dx_t = -\theta (x_t-\mu) dt + \sigma dW_t,
\]
where $W$ is a Brownian motion. It has a stationary distribution, which is Gaussian centered on $\mu$ with variance $\frac{\sigma^2}{2\theta}$. Such a process can be used to model, e.g., the relaxation of a spring in physics (normally exponential) in the presence of thermal noise. We will undoubtedly assume $\mu = 0$ (i.e., $X_{ij}$ centered on 0), and possibly $\sigma = 1$.
\end{remark}
The eigenvalues, being differentiable functions of the matrix entries $X_{ij}$, will then also follow a diffusion process. Applying It\^o's formula (nontrivially), this can be found to be
\[
d\lambda_i = \frac{1}{\sqrt{N}} dB_i + \left(-\frac{\beta}{4} \lambda_i + \frac{\beta}{2N} \sum_{i\neq j} \frac{1}{\lambda_i - \lambda_j} \right), \quad i=1,\dotsc,N
\]
Here, $B_i$ are Brownian motions, and $\beta = 1,2,4$ according to whether the ensemble is orthogonal, unitary, or symplectic. The best introduction is Dyson's original paper \cite{dyson62}. A system of coupled SDE's is not trivial to solve, but it turns out that the distribution of the solution converges to an equilibrium distribution. One could ask, e.g., about the speed of convergence. This material is covered in L\'aszl\'o Erd\"os's lecture notes, \url{http://www.mathematik.uni-muenchen.de/~lerdos/Notes/tucson0901.pdf}.

\subsection{Connections to other disciplines}
The methods that were initially developed for random matrices have since been used in various other places.
\subsubsection{Longest increasing sequence in a permutation (Baik-Deift-Johansson)}
Consider a permutation in $S_n$, e.g. $\pi \in S_4$ which maps $1234$ to $1324$. We define $l(\pi)$ to be the length of the longest increasing subsequence in $\pi$; here, $l(\pi) = 3$ (corresponding to subsequences $134$ or $124$). We would like asymptotics for $l(\pi)$, and particularly for its distribution, as $n \to \infty$.

We will use the \emph{RSK (Robinson-Schensted-Knuth) correspondence} between permutations and pairs of \emph{standard Young tableaux}. A standard Young tableau is the following object. First, partition $n$, i.e. write it as $n = \lambda_1 + \lambda_2 + \dotsc + \lambda_r$, where $\lambda_i$ are integers, and $\lambda_1 \geq \lambda_2 \geq \dotsc \geq \lambda_r > 0$. E.g., for $n=10$ we might write $10 = 5 + 3 + 2$. We then draw the corresponding shape, where the $\lambda_i$ are row lengths:
\begin{figure}[ht]
\input{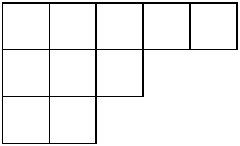_t}
\end{figure}

To turn this into a standard Young tableau, we will fill it with numbers $1,\dotsc,n$ which increase along rows and along columns. The RSK correspondence asserts that there is a bijection between permutations $\pi$ and pairs of standard Young tableaux of the same shape. For example, there is a permutation $\pi \in S_{10}$ corresponding to the pair
\[
P = \begin{matrix}
1 & 2 & 5 & 6 & 8\\3 & 4 & 7\\9 & 10
\end{matrix}
\qquad\qquad
Q = \begin{matrix}
1 & 3 & 5 & 7 & 9\\2 & 4 & 6\\8 & 10
\end{matrix}
\]
The length $l(\pi)$ is the length of the top row of the standard Young tableau. We define also $r(\pi)$ the number of columns in the standard Young tableau; this is the length of the longest decreasing subsequence in $\pi$.

Standard Young tableaux have been well-studied because of their relationship to representations of $S_n$. See for example W. Fulton. \emph{Young Tableaux}. Cambridge University Press 1997.

To get the distribution of $l(\pi)$, we need to count the number of standard Young tableaux with a given length of the top row. Now, there is a formula, due to Frobenius and Young, for the number $f(\lambda)$ of standard Young tableaux of a given shape $\lambda$ (this is, by the way, the dimension of the corresponding irreducible representation of $S_n$). (Here, $\lambda$ is a partition of $n$, that is, $n = \lambda_1 + \dotsc + \lambda_r$ with $\lambda_1 \geq \lambda_2 \geq \dotsc \geq \lambda_r > 0$.)

Let $h_i = \lambda_i + (r-i)$, then the Frobenius-Young formula asserts
\[
f(\lambda) = n! \prod_{i > j} (h_i - h_j) \prod_{i=1}^r \frac{1}{h_i!}
\]
\begin{remark}
The formula which tends to get taught in combinatorics classes is the \emph{hook length formula}: for each position $x$ in the standard Young tableau, let $hook(x)$ count the number of cells to the right of it, plus the number of cells below it, plus 1 for $x$ itself. Then $f(\lambda) = n! / \prod_x hook(x)$, but this isn't very useful for us. (It is useful in other contexts. For example, Vershik and Kerov used this formula to obtain the asymptotic shape of a typical standard tableau.)
\end{remark}
In particular, by the RSK correspondence the number of permutations $\pi$ with $r(\pi) = r$ will be
\[
(n!)^2 \sum_{h_1,\dotsc,h_r: \sum h_i = n+\frac{r(r-1)}{2}} \prod_{i < j}(h_i - h_j)^2 \prod_{i=1}^r \frac{1}{(h_i!)^2}
\]
Since the product $\prod (h_i - h_j)^2$ is related to the Vandermonde determinant, we get the connection to orthogonal polynomials and then to random matrices.

\subsubsection{Last passage percolation.}
Consider a square $M \times N$ lattice (with $(M+1) \times (N+1)$ points), with weights $w_{ij}$ in vertices. We will take $w_{ij}$ to be iid geometric, i.e.
\[
\BP(w_{ij} = k) = (1-q)q^k.
\]
We would like to find a path from $(0,0)$ to $(M,N)$ which moves only up and to the right, and which maximizes the sum $\sum w_{ij}$ of the weights it passes through. Let
\[
G(N,M) = \max_{p: \text{path up and right}} \sum_{w_{ij}}
\]
be this maximum.

Note that $W$ is a matrix of nonnegative integers. We can write a generalised permutation corresponding to it: the permutation $\pi$ will contain the pair $\binom{i}{j}$ $w_{ij}$ times. For example,
\[
W = \begin{pmatrix} 1 & 2 & 0\\0 & 3 & 0\\ 1 & 1 & 0\\ 1 & 0 & 1\end{pmatrix},
\quad
\pi = \begin{pmatrix}
1 & 1 & 1 & 2 & 2 & 2 & 3 & 3 & 4 & 4\\
1 & 2 & 2 & 2 & 2 & 2 & 1 & 2 & 1 & 3
\end{pmatrix}
\]
Then the optimal path corresponds to the longest nondecreasing subsequence in $\pi$, and $\pi$ (via the RSK correspondence -- in fact, this was Knuth's contribution) corresponds to a pair of \emph{semistandard} Young tableaux: the semistandard Young tableaux are filled by the top, resp. the bottom, row of $\pi$, and the numbers must be increasing along rows and nondecreasing along columns. Semistandart Young tableaus can be counted similarly to standard ones; the polynomials that arise are called the Meixner orthogonal polynomials.

\subsubsection{Other problems.}
We don't have time to mention these in any detail, but there are totally asymmetric exclusion processes, Aztec diamond domino tilings, and viscious walkers. 
Most of the work on the topics in this section has been done by Johansson and his co-authors, and the best place to find out more is to read his papers, for example, \cite{baik_deift_johansson99}, \cite{johansson00}, \cite{johansson01a}, \cite{johansson02}.

\bibliographystyle{plain}
\bibliography{comtest}
\end{document}